\let\oldtocsection=\tocsection
\let\oldtocsubsection=\tocsubsection
\renewcommand{\tocsection}[2]{\hspace{0em}\oldtocsection{#1}{#2}}
\renewcommand{\tocsubsection}[2]{\hspace{1em}\oldtocsubsection{#1}{#2}}
\newtheorem{thm}{Theorem}[section]
\newtheorem{lemma}[thm]{Lemma}
\newtheorem{cor}[thm]{Corollary}
\newtheorem{prop}[thm]{Proposition}
\theoremstyle{remark}
\newtheorem{Rem}[thm]{Remark}
\theoremstyle{remark}
\newtheorem{example}[thm]{Example}
\theoremstyle{definition}
\newenvironment{customprop}[1]
{\innercustomprop}
{\endinnercustomprop}
\theoremstyle{definition}
\theoremstyle{definition}
\newtheorem{definition}[thm]{Definition}
\numberwithin{equation}{section}
\newcommand{\C}{\mathbb{C}}           % Use for complex numbers.
\newcommand{\N}{\mathbb{ N}}           % Use for positive integers.
\newcommand{\Z}{\mathbb{ Z}}           % Use for integers.
\newcommand{\Q}{\mathbb{ Q}}           % Use for rational numbers
\newcommand{\Ad}{\operatorname{Ad }}             % Use for Adjoint action
\newcommand{\Hom}{\operatorname{Hom}}
\newcommand{\Ind}{\operatorname{Ind}}
\newcommand{\fb}{{\mathfrak b}}
\newcommand{\fg}{{\mathfrak g}}
\newcommand{\fl}{{\mathfrak l}}
\newcommand{\fp}{{\mathfrak p}}
\newcommand{\ft}{{\mathfrak t}}
\newcommand{\fu}{{\mathfrak u}}
\newcommand{\fs}{{\mathfrak s}}
 \newcommand{\cb}{\mathcal{B}}
\newcommand{\cc}{\mathcal{C}}
\newcommand{\ch}{\mathcal{H}}
 \newcommand{\cl}{\mathcal{L}}
 \newcommand{\cm}{\mathcal{M}}
 \newcommand{\cn}{\mathcal{N}}
 \newcommand{\co}{\mathcal{O}}
 \newcommand{\cp}{\mathcal{P}}
 \newcommand{\cs}{\mathcal{S}}
 \newcommand{\ct}{\mathcal{T}} 
 \newcommand{\cv}{\mathcal{V}}
 \newcommand{\cx}{\mathcal{X}}
\newcommand{\hess}[2]{\mathcal{B}_{#1}^{#2}}
\newcommand{\ideals}{\mathfrak Id}
\newcommand{\cosets}{{W^0}}
\newcommand{\Ar}{{\mathrm{Irr}(A(x))}}
\newcommand{\Irr}{{\mathrm{Irr}}}
\newcommand{\Wr}{{\Irr(W)}}
\newcommand{\pairs}{(\co, \cl)}
\newcommand{\bpairs}{\co, \cl}  % no parenthesis
\newcommand{\allpairs}{\Theta}  % all pairs
\newcommand{\sppairs}{\Theta_{sp}}  % springer pairs
\newcommand{\Ch}{\mathsf{C}\mathrm{h}}
\renewcommand{\tilde}{\widetilde}
\newcommand{\sh}{\underline{\C}}
\renewcommand{\bar}[1]{\overline{#1}}
\newcommand{\map}{\Psi}
\newcommand{\LLT}{\mathrm{LLT}}
\begin{document}

% Eric comments out:	
%\parskip=4pt
%\baselineskip=14pt

%%%%%%%%%%%%%%%%%%%%%%%%%%%%%%%%%%%%%%%%%%%%%%%%%%%%
%%%%%%%%%%%%%%%%%%%%%%%%%%%%%%%%%%%%%%%%%%%%%%%%%%%%
%%%%%%%%%%%%%%%%%%%%%%%%%%%%%%%%\end{center}%%%%%%%%%%%%%%%%%%%%
\title[Perverse sheaves, nilpotent Hessenberg varieties, and the modular law]{Perverse sheaves, nilpotent Hessenberg varieties, and the modular law}

\author{Martha Precup}
\address{Department of Mathematics and Statistics\\ Washington University in St. Louis \\ One Brookings Drive\\ St. Louis, Missouri\\ 63130\\ USA  }
\email{martha.precup@wustl.edu}

\author{Eric Sommers}
\address{Department of Mathematics and Statistics\\ University of Massachusetts Amherst \\Lederle Tower \\ Amherst, MA \\ 01003 \\ USA  }
\email{esommers@math.umass.edu}

\dedicatory{Dedicated to George Lusztig}

\date{\today}

\maketitle

\begin{abstract}
We consider generalizations of the Springer resolution of the nilpotent cone of a simple Lie algebra by replacing the cotangent bundle with certain other vector bundles over the flag variety.  We show that the analogue of the Springer sheaf has as direct summands only intersection cohomology sheaves that arise in the Springer correspondence.   The fibers of these general maps are nilpotent Hessenberg varieties, and we build on techniques established by De Concini, Lusztig, and Procesi to study their geometry.  For example, we show that these fibers have vanishing cohomology in odd degrees. This leads to several implications for the dual picture, where we consider maps that generalize the Grothendieck--Springer resolution of the whole Lie algebra.  
In particular we are able to prove a conjecture of Brosnan.
	
As we vary the maps, the cohomology of the corresponding nilpotent Hessenberg varieties often satisfy a relation we call the geometric modular law, which also has origins in the work of De Concini, Lusztig, and Procesi.  We connect this relation in type $A$ with a combinatorial modular law defined by Guay-Paquet that is satisfied by certain symmetric functions and deduce some consequences of that connection.
\end{abstract}

%%%Sec 1: Introduction%%%%%%%%%%%%%%%

\section{Introduction}
Let $G$ be a  simple algebraic group over $\C$ with Lie algebra $\fg$.  Let $B$ be a Borel subgroup with Lie algebra $\fb$ containing a maximal torus 
$T$ with Lie algebra $\ft$.  Denote by $\Phi$ the root system associated to the pair $(T,B)$, with simple roots $\Delta$.  Let $U$ be the unipotent radical of $B$ with $\fu$ its Lie algebra. 
Let $W=N_G(T)/T$ be the Weyl group of $T$ and $\cb:= G/B$ the flag variety of $G$.

Let $\cn$ denote the variety of nilpotent elements in $\fg$.  
For complex varieties, $\dim(X)$ refers to the complex dimension.  
Let $N: = \dim (\cn)$, which equals $2 \dim(\fu)$.
The Springer resolution of the nilpotent cone $\cn$ in $\fg$ is the proper, $G$-equivariant 
map 
$$\mu: G \times^B \fu \to \cn$$ 
sending $(g,x) \in G \times \fu$ to $g.x$, 
where $g.x := \Ad(g)(x)$ denotes the adjoint action of $G$ on $\fg$.

Let $\sh[N]$ be the shifted constant sheaf on $G\times^B\fu$ with coefficients in $\C$.  The shift makes it  a $G$-equivariant
perverse sheaf on $G \times^B \fu$.
A central object in Springer theory is
the Springer sheaf $R\mu_* (\sh[N])$, the derived pushforward of $\sh$ under $\mu$.
The Springer sheaf is a $G$-equivariant perverse sheaf on $\cn$.
The nilpotent cone $\cn$ is stratified by nilpotent $G$-orbits.   Let $\co$ be a nilpotent orbit and $\cl$ 
an irreducible $G$-equivariant local system on $\co$.   Denote by $\allpairs$ the set of all such pairs $(\co,\cl)$.
Let $IC(\bar{\co}, \cl)$ denote the intersection cohomology sheaf on $\cn$ defined by a pair $(\co,\cl) \in \allpairs$.
We use the convention that if $\co' \subsetneq \co$, then $\ch^{j} IC(\bar{\co},\cl)|_{\co'} = 0$ unless 
$$-\!\dim \co \leq j  < -\!\dim \co'.$$ 
The decomposition theorem implies that $R\mu_*(\sh[N])$ is a direct sum of shifted IC-complexes.  That is,
\begin{eqnarray}\label{eqn.Springer.sheaf1}
	R\mu_*(\sh[N]) \simeq \bigoplus_{(\co, \cl)\in \allpairs} IC(\bar{\co}, \cl)\otimes V_{\co, \cl},
\end{eqnarray}
where each $V_{\co, \cl}$ is a graded complex vector space.  
Since $\mu$ is semismall,  $V_{\co, \cl}$ is concentrated in degree $0$.
Now, both sides of \eqref{eqn.Springer.sheaf1} carry an action of $W$ that makes   the nonzero vector space $V_{\co, \cl}$ into an irreducible representation of $W$.    
Let $\sppairs$  denote the pairs $(\co, \cl)$ for which $V_{\co, \cl} \neq 0$. The Springer correspondence says that  the map
\begin{eqnarray}\label{eqn.Springer.correspondence}
(\co, \cl) \in \sppairs \to V_{\co, \cl} \in \Wr
\end{eqnarray}
is a bijection. Here, $\Irr(K)$ 
denotes the irreducible complex representations of a group $K$.   
Our convention for the Springer correspondence sends the zero orbit with trivial local system to the sign 
representation of $W$ and the regular nilpotent orbit with trivial local system to the trivial representation of $W$.
See~\cite[Chapter 8]{Achar-book} for a more detailed discussion of the Springer correspondence.  

This paper is concerned with 
the generalization of \eqref{eqn.Springer.sheaf1} when $\fu$ is replaced by a subspace $I \subset \fu$ that is $B$-stable, as well as the 
connection of this map to related objects in Lie theory and combinatorics.
The $B$-stable subspaces are also called ad-nilpotent ideals of $\fb$ and are well-studied Lie-theoretic objects (see, for example, \cite{Kostant1998, Cellini-Papi2000}).  
Denote by $\ideals$ the set of all $B$-stable subspaces of $\fu$.  
The cardinality of $\ideals$  is the $W$-Catalan number 
$\prod _{i=1}^n \frac{d_i+h}{d_i}$ where $d_1, \dots, d_n$ are the fundamental degrees of $W$ and $h$ is the Coxeter number.
In type $A$ these ideals are in bijection with Dyck paths (see \S \ref{sec.modular.revisited}).

If $I \in \ideals$, then $G.I$ is the closure of a nilpotent orbit, denoted by $\co_I$.  
The restriction of $\mu$ gives a map
\begin{equation}\label{main_map}
\mu^I: G \times^B I \to \bar{\co_I}
\end{equation}
that is still proper, but it is no longer a resolution or semismall in general. 
Set $N_I: = \dim(G \times^B I )$, which equals $\dim I+\dim G/B$.  
The decomposition theorem still applies to the analogue of the Springer sheaf.  Namely,
\begin{eqnarray}\label{eqn.gen_Springer.sheaf2_intro}
	R\mu^I_*(\sh[N_I]) \simeq \bigoplus_{\pairs \in \allpairs} IC(\bar{\co}, \cl) \otimes V^I_{\co, \cl},
\end{eqnarray}
where $V^I_{\co, \cl}$ is a graded vector space, but no longer concentrated in degree $0$ in general.

Our first main result is that if a pair $(\co, \cl) \in \allpairs$ 
contributes a nonzero term in \eqref{eqn.gen_Springer.sheaf2_intro}, then it must appear in the Springer correspondence, i.e., it contributes a nonzero term in \eqref{eqn.Springer.sheaf1}. 
In other words, 
\begin{thm} \label{ThmA}
Let $I \in \ideals$.  If $V^I_{\bpairs} \neq 0$ in  \eqref{eqn.gen_Springer.sheaf2_intro}, then  $(\co, \cl) \in \sppairs$. 
\end{thm}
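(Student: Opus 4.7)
The plan is to translate the statement into an assertion about the cohomology of the fibers of $\mu^I$, and then deduce it from a Bialynicki--Birula cell argument together with the Springer correspondence.

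Fix $x \in \co$ and write $A(x) = Z_G(x)/Z_G(x)^0$ for its component group. A standard reformulation of the Springer correspondence says that $(\co,\cl) \in \sppairs$ if and only if the $A(x)$-module corresponding to $\cl$ appears in the $A(x)$-module $H^*(\mu^{-1}(x);\C)$. Theorem~\ref{ThmA} therefore reduces to the following two claims:
\begin{enumerate}
\item[(a)] If $V^I_{\co,\cl} \neq 0$ then $\cl$ appears in the $A(x)$-module $H^*((\mu^I)^{-1}(x);\C)$.
\item[(b)] The closed inclusion $(\mu^I)^{-1}(x) \subseteq \mu^{-1}(x)$ induces a surjective and $A(x)$-equivariant restriction map
\[
H^*(\mu^{-1}(x);\C) \longrightarrow H^*((\mu^I)^{-1}(x);\C).
\]
\end{enumerate}

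Claim (a) follows from the decomposition \eqref{eqn.gen_Springer.sheaf2_intro} and the IC sheaf convention stated before \eqref{eqn.Springer.sheaf1}: if $V^I_{\co,\cl}\neq 0$ then $IC(\bar\co,\cl)$ is a summand of $R\mu^I_*(\sh[N_I])$, so its restriction to $\co$ contributes $\cl$ to the stalk cohomology at $x$, and this contribution cannot be cancelled by stalks of $IC(\bar{\co''},\cl'')$ for $\co \subsetneq \bar{\co''}$ since those live in strictly different cohomological degrees by the support conditions.

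For claim (b), choose an $\mathfrak{sl}_2$-triple $(x,h,y)\subset\fg$ and the associated Jacobson--Morozov cocharacter $\phi : \C^* \to T$ with $\Ad(\phi(t))\,x = t^2 x$. The rule $\phi(t)\cdot gB := \phi(t)gB$ defines a $\C^*$-action on the Springer fiber $\mu^{-1}(x) = \{gB : \Ad(g^{-1})x \in \fu\}$, because $\fu$ is stable under scaling, and it preserves the Hessenberg fiber $(\mu^I)^{-1}(x) = \{gB : \Ad(g^{-1})x \in I\}$ since $I$ is scaling-stable too. The Bialynicki--Birula theorem then decomposes $\mu^{-1}(x)$ into affine attractor cells indexed by the $\phi(\C^*)$-fixed points. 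The crucial geometric input, obtained via the extension of the De Concini--Lusztig--Procesi method to nilpotent Hessenberg varieties developed in the paper (the same techniques that yield the vanishing of odd cohomology of these fibers), is that each attractor cell meeting $(\mu^I)^{-1}(x)$ lies entirely inside $(\mu^I)^{-1}(x)$. Granting this, $(\mu^I)^{-1}(x)$ is a closed CW-subcomplex of $\mu^{-1}(x)$ for the BB cell structure, and surjectivity of the restriction in cohomology follows. For the $A(x)$-equivariance, representatives of $A(x) \cong L/L^0$ can be chosen inside the reductive centralizer $L = Z_G(x,h,y)$, which commutes with $\phi(\C^*)$ because it commutes with $h$.

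The main obstacle is the cell-compatibility assertion in (b): verifying that the condition $\Ad(g^{-1})x \in I$ propagates from any $\phi(\C^*)$-fixed point of $(\mu^I)^{-1}(x)$ throughout its entire attractor inside $\mu^{-1}(x)$. This is precisely where the Hessenberg-variety techniques developed in the paper, refining those of De Concini, Lusztig and Procesi, are essential.
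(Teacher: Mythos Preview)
Your reduction (a) is correct and matches the paper's equation \eqref{poincare_decomp}: nonnegativity of the local IC multiplicities forces the $\chi$-isotypic part of $H^*(\hess{x}{I})$ to be nonzero whenever $V^I_{\co_x,\cl_\chi}\neq 0$.

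The gap is in (b). Your key geometric claim---that each Bialynicki--Birula attractor of $\cb_x$ meeting $\hess{x}{I}$ lies entirely inside it---is false. First, the $\phi(\C^*)$-fixed locus in $\cb_x$ is not finite: for $w\in\cosets$ the fixed set in $\cp_w\cap\cb_x$ is the smooth projective variety $X_{U'}$ with $U'=w.\fu\cap\fg_2$, and the ``cell'' $\hess{x,w}{\fu}=\cp_w\cap\cb_x$ is a vector bundle over $X_{U'}$, not an affine space. Second, and more seriously, Proposition~\ref{prop.P-orbits} shows that $\hess{x,w}{I}=\cp_w\cap\hess{x}{I}$ is a vector bundle over $X_U$ with $U=w.I\cap\fg_2$. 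Since $I\subsetneq\fu$, one typically has $U\subsetneq U'$, hence $X_U\subsetneq X_{U'}$, and moreover the fibre dimension $t_I(w)$ is smaller than $t_{\fu}(w)$. So $\hess{x,w}{I}$ is a \emph{proper} sub-bundle of $\hess{x,w}{\fu}$ over a \emph{proper} subvariety of the base, not a union of full attractors. Consequently $\hess{x}{I}$ is not a closed CW-subcomplex of $\cb_x$ in any sense that would make your surjectivity argument go through.

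The paper does \emph{not} establish surjectivity of the restriction map; its route is different. Corollary~\ref{cor.P-orbits} expresses $H^*(\hess{x}{I})$, as an $A(x)$-module, as $\bigoplus_{U\in\ideals_2^{gen}}\cv_{I,U}\otimes H^*(X_U)$. The same corollary for $M=\fu$ does this for $\cb_x$. The substantive step is Lemma~\ref{lem.g2-ideals}, a root-system argument showing that the map $w\mapsto w.\fu\cap\fg_2$ from $\cosets$ to $\ideals_2$ is \emph{surjective}: every building block $X_U$ with $U\in\ideals_2^{gen}$ already occurs in the decomposition of $\cb_x$. Since the Springer correspondence (via Lusztig--Shoji) gives $\mathsf P(\cb_x;\chi)=0$ for $(x,\chi)\notin\sppairs$, each $\mathsf P(X_U;\chi)$ vanishes, and hence so does $\mathsf P(\hess{x}{I};\chi)$. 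This is an $A(x)$-module comparison of building blocks, not a map of cohomology rings; your surjectivity strategy, even if it could be repaired, is asking for more than is needed.
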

The  case when $I  = \fu_P$, the nilradical of the Lie algebra of a parabolic subgroup $P$ of $G$, 
was established by Borho and MacPherson \cite{Borho-MacPherson1983} 
and they gave a formula for the dimensions of the vector spaces $V^I_{\bpairs}$ (see  \eqref{parabolic_case}).

Theorem \ref{ThmA} is proved by analyzing the fibers of the map $\mu^I$.  Let $x \in \cn$, and let
$$\hess{x}{I}:= (\mu^I)^{-\!{1}}(x).$$   
For the $I = \fu$ case, the fibers $\mu^{-1}(x)$ are the Springer fibers and denoted more simply
as $\cb_x$. 
The fiber $\hess{x}{I}$ is a subvariety of $\cb_x$, 
and any variety defined in this way is called a nilpotent Hessenberg variety.

For $x \in \cn$, denote by $\co_x$ the $G$-orbit of $x$ under the adjoint action.  The component group 
$A(x):= Z_G(x)/Z_G^\circ(x)$ 
is a finite group, which identifies with the fundamental group of $\co_x$ when $G$ is simply-connected.
The cohomology of $\hess{x}{I}$ carries an action of $A(x)$ and Theorem \ref{ThmA} is equivalent,
using proper base change, to showing that if $\chi \in \Ar$ has nonzero multiplicity in $H^*(\hess{x}{I})$, then the
pair $(\co_x, \cl_\chi)$ belongs to $\sppairs$.   Here, $\cl_\chi$ denotes the  irreducible  $G$-equivariant local system on $\co_x$
defined by $\chi$.

The analysis of the $\hess{x}{I}$ 
occurs in \S\ref{sec.decomposition}, 
where we establish a decomposition of $\hess{x}{I}$ into vector bundles over a small set of smooth varieties, 
from which we also deduce that $\hess{x}{I}$ has no odd cohomology.
These results are generalizations of those for the Springer fibers $\cb_x$, handled by De Concini, Lusztig and Procesi in \cite{dCLP1988}, and we rely on the techniques developed in that paper.
Theorem~\ref{ThmA} is then proved in \S\ref{sec.proof_thmA}.

Theorem~\ref{ThmA} has an important implication for certain generalizations  of the Grothendieck--Springer resolution.
For $I \in \ideals$, we can consider $I^\perp \subset \fg$, the orthogonal complement to $I$ under the Killing form.  
Then $H=I^{\perp}$ is also $B$-stable and it contains $\fb$, the Lie algebra of $B$.  
The map $\mu^H$  given by
\begin{equation}\label{hessy_main_map}
	\mu^H: G \times^B H \to \fg
\end{equation}
is surjective and generalizes the Grothendieck--Springer resolution for $H = \fb$.
Using Theorem \ref{ThmA} and the Fourier transform, 
we deduce in Theorem \ref{thm.sheaves.ss}, that 	
$$R\mu^H_* (\sh_{H}[\dim G \times^B H])$$ has full support, proving a
conjecture of Brosnan \cite{Xue2020, Vilonen-Xue2021}.  
This generalizes results of  B\u{a}libanu--Crooks  \cite{Balibanu-Crooks2020} who proved the theorem in type $A$ and Xue \cite{Xue2020} who has given a proof in type~$G_2$.
The remainder of \S\ref{sec.gen_grothendieck} studies applications of Theorem \ref{thm.sheaves.ss}.
In Proposition \ref{prop:nilpo_hessy}  we establish a generalization to all types of an unpublished result of Tymoczko and MacPherson in type $A$.  
We conclude \S \ref{sec.gen_grothendieck} by introducing two graded $W$-representations, the dot action representation of Tymoczko
and LLT representations of Procesi and Guay-Paquet.

In \S \ref{sec.modular_law} the main result relates the cohomology of $\hess{x}{I}$ 
for certain triples of subspaces $I \in \ideals$.
The setup is a triple 
$I_2 \subset I_1 \subset I_0$ of ideals in $\ideals$, each of codimension one in the next.
For a simple root $\alpha \in \Delta$, let $P_\alpha$ denote the minimal parabolic subgroup containing $B$ associated to $\alpha$.
The triples $I_2 \subset I_1 \subset I_0$ of interest are those satisfying the following conditions:
\begin{enumerate}
\item $I_2$ and $I_0$ are $P_\alpha$-stable for some $\alpha \in \Delta$, and 
\item %the restriction of the two-dimensional space $I_0/I_2$ to 
%a Levi subgroup  of $P_\alpha$, which is of type $A_1$, is an irreducible representation.  
the representation of the Levi subgroup of $P_\alpha$ (which is of type $A_1$) on the two-dimensional space $I_0/I_2$ is irreducible.
\end{enumerate}
Such triples were introduced in \cite[\S2.8]{dCLP1988}.  
See Definition \ref{basic_move_defn} for a purely root-theoretic definition.
The first example of such a triple occurs when 
$\fg = \mathfrak{sl}_3(\C)$.  Let $\alpha, \beta$ denote the simple roots and let $I_0 = \fu_{P_\alpha}$ be the nilradical of parabolic subalgebra $\mbox{Lie}(P_\alpha)$.  Set $I_2 = \{0\}$.  There is a unique $I_1 \in \ideals$ with $I_2 \subsetneq I_1 \subsetneq I_0$ 
and these three spaces form such a triple.

Our result below is a generalization of  \cite[Lemma 2.11]{dCLP1988} in that it applies to all nilpotent elements, not just to those $x \in \co_{I_0}$.
\begin{prop}[The geometric modular law] \label{geom_modular_fiber}
	Given a triple $I_2 \subset I_1 \subset I_0$ as above and $x \in \cn$, there is an $A(x)$-equivariant isomorphism
	\begin{equation}
		H^j(\hess{x}{I_1}) 	\oplus H^{j-2}(\hess{x}{I_1}) \simeq H^j(\hess{x}{I_0}) \oplus H^{j-2}(\hess{x}{I_2})
	\end{equation}
	for all $j \in \Z$.
\end{prop}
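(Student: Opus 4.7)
The plan is to exploit the $P_\alpha$-stability of $I_0$ and $I_2$ to express the cohomology of each of the three Hessenberg varieties as a sum of cohomologies of two auxiliary subvarieties of $G/P_\alpha$, reducing the modular law to a formal bookkeeping identity.

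First I would introduce the closed subvarieties
$$X_j := \{\, gP_\alpha \in G/P_\alpha : g^{-1}.x \in I_j \,\}, \qquad j = 0, 2,$$
which are well-defined because $I_0$ and $I_2$ are $P_\alpha$-stable. The restriction of $G/B \to G/P_\alpha$ gives $\mathbb{P}^1$-bundles $\pi_j : \hess{x}{I_j} \to X_j$, and Leray--Hirsch (via the pullback of a suitable Chern class on $G/B$) produces $Z_G(x)$-equivariant decompositions
$$H^j(\hess{x}{I_k}) \cong H^j(X_k) \oplus H^{j-2}(X_k), \qquad k = 0, 2.$$
The crux of the argument is to prove the analogue $H^j(\hess{x}{I_1}) \cong H^j(X_0) \oplus H^{j-2}(X_2)$ for $I_1$, which is $B$-stable but not $P_\alpha$-stable. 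For this I would analyze the fibers of $p := \pi_0|_{\hess{x}{I_1}} : \hess{x}{I_1} \to X_0$. Over any $gP_\alpha \in X_2$, the $P_\alpha$-stability of $I_2$ forces the entire $\mathbb{P}^1$-fiber to lie in $\hess{x}{I_1}$. Over any $gP_\alpha \in U := X_0 \setminus X_2$, the nonzero vector $\bar y := (g^{-1}.x) + I_2 \in I_0/I_2$ determines, via the standard $2$-dimensional irreducible $L_\alpha$-structure on $I_0/I_2$, a unique $B$-translate of the highest-weight line $I_1/I_2$, which in turn specifies a single coset $qB \in P_\alpha/B$. Hence $p$ has $\mathbb{P}^1$-fibers over $X_2$ and point-fibers over $U$, and proper base change yields
$$R^0 p_* \underline{\C} = \underline{\C}_{X_0}, \quad R^1 p_* \underline{\C} = 0, \quad R^2 p_* \underline{\C} = i_* \underline{\C}_{X_2},$$
with $i : X_2 \hookrightarrow X_0$ the inclusion.

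Next I would invoke the odd-cohomology vanishing for nilpotent Hessenberg varieties from \S\ref{sec.decomposition}: applied to $\hess{x}{I_0}$ and $\hess{x}{I_2}$ together with the Leray--Hirsch formulas above, it forces $H^{\mathrm{odd}}(X_0) = H^{\mathrm{odd}}(X_2) = 0$. In the Leray spectral sequence for $p$, the $E_2^{p,q}$-terms then lie only in even bidegrees ($q \in \{0,2\}$ and $p$ even), so the one potentially nontrivial differential $d_3 : E_3^{p,2} \to E_3^{p+3,0}$ vanishes by parity. Because $A(x)$ is finite, the resulting $A(x)$-equivariant filtration on each $H^j(\hess{x}{I_1})$ splits, yielding the desired $H^j(\hess{x}{I_1}) \cong H^j(X_0) \oplus H^{j-2}(X_2)$. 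Substituting all three decompositions shows that both sides of the proposition equal
$$H^j(X_0) \oplus H^{j-2}(X_0) \oplus H^{j-2}(X_2) \oplus H^{j-4}(X_2),$$
and the $Z_G(x)$-equivariance of every construction makes the resulting isomorphism $A(x)$-equivariant.

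The main obstacle I anticipate is the local analysis of the fibers of $p$ over $U$: one must verify that each nonzero $\bar y \in I_0/I_2$ selects a \emph{single} coset of $B$ in $P_\alpha$, which is precisely where the defining hypothesis on the triple (that $L_\alpha$ acts irreducibly on the $2$-dimensional space $I_0/I_2$) enters in an essential way. Once this fiberwise picture is in hand, the rest of the proof is a formal Leray spectral sequence argument powered by the odd-cohomology vanishing already established in the paper.
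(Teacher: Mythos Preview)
Your argument is correct and reaches the same conclusion, but the route differs from the paper's. The paper follows \cite{dCLP1988} closely: it introduces an auxiliary incidence variety
\[
Z=\{(gB,g'B)\in G/B\times G/B \mid g^{-1}.x\in I_1,\ g^{-1}g'\in P_\alpha\},
\]
which is a $\mathbb P^1$-bundle over $\hess{x}{I_1}$, identifies $Z$ with a blow-up-type variety $Y$ lying over $\hess{x}{I_0}$ (with exceptional locus a $\mathbb P^1$-bundle over $\hess{x}{I_2}$), and then compares the long exact sequences in Borel--Moore homology for the pairs $(\hess{x}{I_0},\hess{x}{I_2})$ and $(Y,E)$, using odd vanishing (Corollary~\ref{odd_vanishing}) to break them into short exact sequences.

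You instead descend along the $\mathbb P^1$-bundle $G/B\to G/P_\alpha$: the $P_\alpha$-stability of $I_0,I_2$ lets you define $X_0,X_2\subset G/P_\alpha$ directly, you analyse the single map $p\colon \hess{x}{I_1}\to X_0$ fibrewise, and you use the Leray spectral sequence (with odd vanishing forcing $d_3=0$) to obtain $H^j(\hess{x}{I_1})\cong H^j(X_0)\oplus H^{j-2}(X_2)$. Your version is more streamlined---no auxiliary correspondence variety---and makes the role of the irreducible $L_\alpha$-action on $I_0/I_2$ explicit as the mechanism pinning down the single-point fibres over $X_0\setminus X_2$. The paper's version stays closer to the original source, trades the spectral sequence for elementary long exact sequences, and works entirely inside $G/B$. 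The two arguments are related by pushing the paper's picture down along $G/B\to G/P_\alpha$, so the underlying geometry is the same; only the bookkeeping differs.
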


The proposition has a formulation as a statement about perverse sheaves.  
If $Q$ is any parabolic subgroup stabilizing $I$, then we can also 
consider the map $\mu^{I,Q}: G \times ^Q I \to \overline{\co_I}$ and its derived pushforward
%$$\cs_{I,Q}:= R\mu^{I,Q}_*(\sh[\dim G/Q+\dim I]).$$
$$\cs_{I,Q}:= R\mu^{I,Q}_*(\sh[\dim(G \times ^Q I)]).$$
Proposition \ref{geom_modular_fiber} has the following consequence (see Proposition \ref{geom_modular_v2}): 
for any triple $I_2 \subset I_1 \subset I_0$ as above,
there is an isomorphism
$$\cs_{I_1,B} \simeq \cs_{I_2,P_\alpha} \oplus \cs_{I_0,P_\alpha} $$
in the derived category of $G$-equivariant perverse sheaves on $\cn$.

Proposition \ref{geom_modular_fiber} implies that various polynomials $F^I \in \N[q]$, depending on $I \in  \ideals$,
that arise in our study satisfy 
the following law for a triple of ideals:
$$(1+q) F^{I_1}  =  F^{I_2} + q F^{I_0}.$$
These include the Poincare polynomials of nilpotent Hessenberg varieties and coefficients in the
decomposition of the dot action and LLT representations, see Proposition \ref{mod_laws}.

In type $A$ this law is closely related to a linear relation, called the modular law, satisfied by certain graded symmetric functions.  It is due to Guay-Paquet \cite{Guay-Paquet2013} and studied more recently by Abreu--Nigro \cite{Abreu-Nigro2020}.   It was seeing this law in the combinatorial setting that led us to 
connect it with the work of \cite{dCLP1988}.
Indeed, we show in \S\ref{sec.modular.revisited} that the geometric modular law of 
Proposition \ref{geom_modular_fiber} implies the combinatorial modular law.  
This allows us to give another proof 
of the Shareshian and Wachs Conjecture \cite[Conjecture 1.4]{Shareshian-Wachs2016}
and to show that the Frobenius characteristic of the LLT representation in type $A$ is a unicellular LLT polynomial, see
 Corollary~\ref{cor.typeA-characters}.  The key idea, due to Abreu--Nigro \cite{Abreu-Nigro2020},
 is that in type $A$ any set of polynomials $F^I$ for $I \in \ideals$ 
 satisfying the modular law are completely determined by the $F^{\fu_P}$ where 
 $P$ is a parabolic subgroup.

\subsection{Acknowledgments}  The authors are grateful to Pramod Achar, Tom Braden, Patrick Brosnan, Timothy Chow, Frank Cole, George Lusztig, Alejandro Morales, and John Shareshian for helpful conversations. 

It is a pleasure to dedicate this paper to George Lusztig, a great advisor and friend, who created so many of the beautiful structures on which this paper rests.

\section{Preliminaries}\label{sec.preliminaries}
Let $\Phi^+, \Phi^-$ and $\Delta$ denote the positive, negative and simple roots associated to the pair  $(T,B)$.  
For a simple root $\alpha \in \Delta$, let $s_\alpha \in W$ denote the corresponding simple reflection.  Let $\ell(w)$ denote the minimal length of $w\in W$
when written as a product of simple reflections.
We fix a representative $w\in N_G(T)$ for each $w\in W$, and denote both by the same letter.
Let $\fg_\gamma  \subset \fg$ denote the root space corresponding to $\gamma\in \Phi$.

If $P$ is a parabolic subgroup of $G$, then $\fp$ denotes its Lie algebra and $\fu_P$ the nilradical of $\fp$.
For $P=B$, we instead use $\fb$ and $\fu$. Generally, $P$ will denote a standard parabolic subgroup, i.e., $B \subset P$.

For a rational representation $M$ of $P$, the smooth variety $G \times^P M$ consists 
of equivalence classes of pairs $(g,m) \in G \times M$ with $(gp,p^{-1}.m) \sim (g,m)$.
If $M \subset \fg$, there is a proper map from $G \times^P M$ to $\fg$ given by $(g,m) \to g.m$.
See~\cite{Jantzen}.

We use $H_*(-)$ for Borel-Moore homology with complex coefficients and $H^*(-)$ for 
singular cohomology with complex coefficients.

\subsection{Grading induced by a nilpotent orbit}

Let $x\in \fg$ be a nonzero nilpotent element 
and recall $\co_x$ is the $G$-orbit of $x$.
By the Jacobson--Morozov theorem, 
$x$ can be completed to a 
$\mathfrak{sl}_2$-triple $\{x,h,y\}\subseteq \fg$.  Namely, there exists $h,y\in \fg$ such that
\[
[h,x]=2x,\quad [h,y] = -2y,\quad [x,y]=h,
\]
which implies $\mathrm{span}_\C\{x,h,y\} \simeq \mathfrak{sl}_2(\C)$.   For $j \in \Z$, let
\[
\fg_j:= \{z\in \fg \mid [h,z] = jz \}.
\]
Without loss of generality, we may conjugate the triple so that $h \in \ft$ and $\alpha(h) \geq 0$ for all $\alpha \in \Delta$.  
Then $h$ and the resulting grading $\fg=\oplus_{i\in \Z} \fg_i$ are uniquely determined by $\co_x$. 
We then have $x \in \fg_2$ and $\fb \subseteq \fp$
where $\fp=\bigoplus_{i\geq 0} \fg_i$ is a parabolic subalgebra of $\fg$.

Let  $P$ and $G_0$ be the connected subgroups of $G$ whose Lie algebras are $\fp$ and $\fg_0$, respectively.  
Let $U_P$ be the unipotent radical of $P$.  The Lie algebra of $U_P$ is $\fu_P =\bigoplus_{i \geq 1} \fg_i$. 
Then  $P =G_0U_P$ is a Levi decomposition of $P$ corresponding to the decomposition $\fp = \fg_0 \oplus \fu_P$. 

Set $B_0= B \cap G_0$, which is a Borel subgroup in $G_0$, with Lie algebra $\fb_0 = \fb \cap \fg_0$.
A key fact is that $\fg_{\geq 2}$ is a $P$-prehomogeneous space, meaning there is a unique dense $P$-orbit.
Indeed, $\co_x \cap \fg_{\geq 2}$ is a $P$-orbit in $\fg_{\geq 2}$ and it is dense.  
Moreover, $\fg_2$ is a $G_0$-prehomogeneous space, with dense orbit $\co_x \cap \fg_2$.
See \cite{Carter1993} for these results.

Since $\ft \subset \fg_0$, 
we can define $\Phi_0 \subseteq \Phi$ to be the roots of $\fg_0$ relative to $\ft$ with simple roots $\Delta_0 := \Delta \cap \Phi_0$ and $\Phi_0^{\pm}:= \Phi^{\pm} \cap \Phi_0$.  
Given a nonzero integer $m$, define $\Phi_m:=\{\gamma\in \Phi \mid \fg_\gamma \subseteq \fg_m\}$ 
and $\Phi_{\geq m} := \cup_{i\geq m} \Phi_i$.

\subsection{$P$-orbits on $\cb$} \label{P_orbits}
 Let $\cb_0$ denote the flag variety $G_0/B_0$. 
The Weyl group of $\fg_0$ is $W_0:=\left< s_\alpha\mid \alpha\in \Delta_0\right>$. 
Let $\cosets$ be the set of right coset representatives for $W_0$ in $W$ of shortest length in their respective cosets.  Then
\begin{equation} \label{minimal_coset_reps}
	\cosets = \{w \in W \mid w^{-1}( \Phi_0^+ ) \subset  \Phi^+ \}.
\end{equation}
The set $\cosets$ parametrizes the $P$-orbits on the $\cb=G/B$.  For $w \in \cosets$,
the corresponding $P$-orbit is $\cp_w:= PwB/B$.  

Let $\lambda: \C^* \to T$ be a co-character satisfying $\alpha(\lambda(z)) = z^{\alpha(h)}$ for $z \in \C^*$ and $\alpha \in \Phi$.  
This gives a $\C^*$-action on $\cb$ preserving each $P$-orbit.  
The fixed points of this $\C^*$-action on $\cp_w$ is $G_0wB/B$, which is isomorphic to $\cb_0$. 
Moreover, the smooth variety $\cp_w$ is a vector bundle over its $\C^*$-fixed points $\cp_w^{\C^*} \simeq \cb_0$
with map $\pi_w: \cp_w \to \cb_0$ given by 
\begin{eqnarray} \label{eqn.bundle}
\pi_w(pwB)  = \lim_{z\to 0} \lambda(z)pwB.
\end{eqnarray}
The fibers of this vector bundle identify with the affine space 
$$\mathbf A^{\ell(w)} \simeq \bigoplus_{\beta \in \Phi^+\cap w(\Phi^-)} \fg_\beta.$$ 
By \eqref{minimal_coset_reps} all roots $\beta$ such that $\beta \in \Phi^+\cap w(\Phi^-)$ belong to $\Phi_{\geq 1}$.  
Hence $\C^*$ acts linearly with positive eigenvalues on the fibers
of this vector bundle, a key fact used in \cite{dCLP1988} to decompose the the Springer fiber $\cb_x$.

\section{A decomposition of Hessenberg varieties} \label{sec.decomposition}

\subsection{Definition of Hessenberg varieties}

The fiber of the map $\mu_I$ in \eqref{main_map} over $x \in \cn$ is given by 
$$\hess{x}{I} = \{gB\in \cb \mid g^{-1}. x\in I\},$$
called a \textbf{nilpotent Hessenberg variety}.
These varieties generalize Springer fibers by replacing $\fu$ in the definition of $\cb_x$ with $I \in \ideals$.  

More generally, let $M$ be a subspace of $\fg$ that is $B$-stable.  Since
$M$ is also $T$-stable, it is a sum of weight spaces of $T$.  Let $\Phi_M$
denote the nonzero weights (i.e., roots) of $T$ that appear in the sum.
For any such $M$ and $x \in \fg$, the Hessenberg variety associated to $x$ and $M$ is the 
closed subvariety $\hess{x}{M}$ of the flag variety defined by 
\begin{equation} \label{definition.varieties}
	\hess{x}{M} = \{gB\in \cb \mid g^{-1}.x\in M\}.
\end{equation}
When $x$ is nilpotent, the $\hess{x}{M}$ are called nilpotent Hessenberg varieties.
We will mainly deal with the case where $x$ is nilpotent, but in \S\ref{monodromy} below, the case where $x$ is regular semisimple also arises.
 
We are interested in two kinds of subspaces of $\fg$ that are stable under the action of $B$.  
Those of the first kind are contained in $\fu$ and 
are called ad-nilpotent ideals (since they are Lie algebra ideals in $\fb$) or just ideals.
Those of the second kind contain $\fb$,
and are called Hessenberg spaces.
Let $\ideals$ denote the set of subspaces of the first kind and $\ch$, those of the second kind.  That is,
$$\ideals= \{I \mid I \subset \fu \text{ and } B.I = I\} \text{ and } \mathcal H= \{H  \mid \fb \subset H \text{ and }  B.H = H\}.$$
The two sets are in bijection.  If $I \in \ideals$, then the orthogonal subspace $I^\perp$ to $I$ under the Killing form
is $B$-stable since $I$ is $B$-stable 
and $I^\perp$ contains $\fb$.  Hence $I^\perp \in \mathcal H$.  Since the Killing form is non-degenerate, 
we have $(I^\perp)^\perp = I$, proving that taking the orthogonal complement defines a bijection between ideals in $\ideals$ and Hessenberg spaces~$\mathcal H$.

Let $P$ be a parabolic subgroup of $G$ with Lie algebra $\fp$.  The varieties $\hess{x}{I}$ for $I \in \ideals$ are also a kind of generalization of Spaltenstein varieties:  
if $I=\fu_P$ is the nilradical of $\fp$, 
then the image of $\hess{x}{I}$ in $G/P$ is the Spaltenstein variety ${\mathcal P}^0_x$
from \cite{Borho-MacPherson1983}.
The varieties $\hess{x}{H}$ for $H \in \mathcal H$ are a kind of generalization of Steinberg varieties:
if $H = \fp$, the image of
$\hess{x}{H}$ in $G/P$ is the Steinberg variety~${\mathcal P}_x$ from \cite{Borho-MacPherson1983}.

We now describe a decomposition of $\hess{x}{M}$ when $x$ is nilpotent that generalizes 
the decomposition (for the Springer fiber $\cb_x$) defined and studied by De Concini, Lusztig, and Procesi in~\cite{dCLP1988}. 
The story from {\it loc.~cit.}~goes through:  $\hess{x}{M}$ decomposes as a union of smooth varieties, each of which is a vector
bundle over one of a small set of smooth varieties.   

For the rest of the section, we fix $x$ nilpotent and its induced grading on $\fg$ as in \S\ref{sec.preliminaries}.

\subsection{Building block varieties}\label{building_blocks}

Recall that $\fg_2$ is a prehomogeneous space for $G_0$, with dense $G_0$-orbit 
$\co_x \cap \fg_2$. Let  $\ideals_{2}$ denote the set of $B_0$-stable linear subspaces of $\fg_2$, and  $\ideals^{gen}_{2} \subset \ideals_{2}$ denote those $U \in \ideals_{2}$ with 
$U  \cap \co_x \neq \emptyset$.  

Following ~\cite[\S 2.1]{dCLP1988},  for $U \in \ideals^{gen}_{2}$ define subvarieties of $\cb_0$ as follows
$$X_U := \{ g B_0 \in G_0/B_0 \mid g^{-1}.x \in U\}.$$
These are smooth, projective varieties and
\begin{equation}\label{dim_building}
\dim X_U = \dim(\cb_0) - \dim(\fg_2/U).
\end{equation}
For example, if $U = \fg_2$ then $X_U = \cb_0$.  
The variety $X_U$ is empty for subspaces $U \in \ideals_2 \setminus \ideals_2^{gen}$.  

\begin{Rem} Let $U\in \ideals_2^{gen}$. If we set $I:= U \oplus \fg_{\geq3}$, then $I \in \ideals$ and 
	$X_U \simeq \hess{x}{I}$ by \cite[Proposition 4.2]{FennThesis}, so the $X_U$ are special cases 
	of the varieties being considered.  
\end{Rem}

\subsection{The decomposition}
Let $M$ be a $B$-stable subspace of $\fg$.
The main result of this section is that the Hessenberg variety $\hess{x}{M}$ decomposes as a 
union of vector bundles over disjoint copies of various $X_U$ for $U \in \ideals^{gen}_{2}$.

\begin{lemma}\label{lemma.coset-ideals} 
For each $w \in \cosets$, the subspace $w.M \cap \fg_2$ of $\fg_2$ is $B_0$-stable. 
\end{lemma}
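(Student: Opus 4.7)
The plan is to describe $w.M \cap \fg_2$ explicitly as a sum of root spaces and then verify $B_0$-stability purely combinatorially, using the defining property of $\cosets$ to transfer $B$-stability of $M$ into $B_0$-stability of the intersection.

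First I would note that since $T \subseteq B$, the $B$-stable subspace $M$ is $T$-stable, hence a sum of $T$-weight spaces: $M = (M \cap \ft) \oplus \bigoplus_{\gamma \in \Phi_M} \fg_\gamma$, where $\Phi_M = \{\gamma \in \Phi : \fg_\gamma \subseteq M\}$. Since $w$ normalizes $T$, $w.M$ admits the analogous decomposition, and intersecting with $\fg_2$ (using $\ft \cap \fg_2 = 0$) yields
$$w.M \cap \fg_2 \;=\; \bigoplus_{\delta \in \Phi'_M} \fg_\delta, \qquad \text{where } \Phi'_M := \{w(\gamma) : \gamma \in \Phi_M,\ w(\gamma) \in \Phi_2\}.$$
This subspace is automatically $T$-stable, so it suffices to prove that it is preserved by every root space $\fg_\alpha$ with $\alpha \in \Phi_0^+$ (these, together with $\ft$, span $\fb_0$). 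Equivalently, I need to show that $\Phi'_M$ is closed under adding elements of $\Phi_0^+$ whenever the result is still a root.

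The key step is then to fix $\delta = w(\gamma) \in \Phi'_M$ and $\alpha \in \Phi_0^+$ with $\delta + \alpha \in \Phi$, and rewrite $\delta + \alpha = w(\gamma + w^{-1}(\alpha))$. Here the characterization~\eqref{minimal_coset_reps} of $\cosets$ intervenes: $w^{-1}(\Phi_0^+) \subseteq \Phi^+$, so $w^{-1}(\alpha)$ is a positive root of $\fg$. Since $M$ is $\fb$-stable and $\fg_\gamma \subseteq M$, the bracket $[\fg_{w^{-1}(\alpha)},\fg_\gamma] = \fg_{\gamma + w^{-1}(\alpha)}$ (nonzero, as $\delta + \alpha$ is a root) lies in $M$, so $\gamma + w^{-1}(\alpha) \in \Phi_M$. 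Finally, $(\delta + \alpha)(h) = \delta(h) + \alpha(h) = 2 + 0 = 2$ since $\alpha \in \Phi_0$, so $\delta + \alpha \in \Phi_2$; together with the previous sentence this gives $\delta + \alpha \in \Phi'_M$, completing the proof.

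I do not anticipate a serious obstacle here: the whole argument hinges on the simple observation that $w \in \cosets$ converts a positive root of $\fg_0$ into a positive root of $\fg$, which is exactly the bridge needed to translate $B$-stability of $M$ into $B_0$-stability of $w.M \cap \fg_2$. The only point worth double-checking is the degree bookkeeping, namely that the action of $\fg_\alpha$ for $\alpha \in \Phi_0^+$ automatically preserves the grading level $\fg_2$, which is immediate because $\alpha(h)=0$.
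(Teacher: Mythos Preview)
Your proposal is correct and follows essentially the same approach as the paper: both arguments reduce to the combinatorial fact that $w^{-1}(\Phi_0^+)\subseteq\Phi^+$ for $w\in\cosets$, and then use $B$-stability of $M$ to close up under adding such roots. The only cosmetic difference is that the paper first shows $w.M$ itself is $B_0$-stable and then intersects with the $G_0$-stable space $\fg_2$, whereas you work directly with the intersection and check the $\fg_2$-grading via $\alpha(h)=0$; the substance is identical.
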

\begin{proof}  Since $M$ is $B$-stable and hence $T$-stable, $w.M$ is also $T$-stable since  $T=wTw^{-1}$.  
Also being $T$-stable, $w.M$ is a sum of weight spaces for $T$ and thus $w.M \cap\fg_2$ is a sum of root spaces.
Let $\fg_\beta \subset w.M$ and $\fg_\gamma \subset \fb_0$.  Then $\gamma \in \Phi^+_0$ and $w^{-1}(\gamma) \in \Phi^+$ by 
 \eqref{minimal_coset_reps}.
 Now $w^{-1}(\beta) \in \Phi_M$ and $M$ is $B$-stable, so $w^{-1}(\beta) + w^{-1}(\gamma) \in \Phi_M$ if the sum is a root.
 If so,  $w^{-1}(\beta + \gamma) \in \Phi_M$, which means $\beta+\gamma \in \Phi_{w.M}$.  This shows $w.M$ is $B_0$-stable.
 The result follows since $\fg_2$ is $B_0$-stable, being $G_0$-stable.
\end{proof}

Recall that for $w \in \cosets$, $\cp_w$ denotes the $P$-orbit on $\cb$ containing $wB$.  
The next proposition shows that the intersection $\cp_w \cap \hess{x}{M}$ is smooth and describes its structure. 
The proof is a generalization of the methods in ~\cite{dCLP1988}.  
Some cases of these generalizations have previously appeared in~\cite{Precup2013, Fresse2016, Xue2020}.

\begin{prop}\label{prop.P-orbits} Let $w \in \cosets$ and $U = w.M \cap \fg_2 \in \ideals_2$.
	Set $\hess{x, w}{M} = \cp_w \cap \hess{x}{M}$. 
		\begin{enumerate} 
	\item $\hess{x, w}{M} \neq \emptyset$ if and only if $U \in \ideals^{gen}_{2}$.
	
	\item 	If $\hess{x, w}{M}$ is nonempty, then 
	\begin{enumerate}
		\item $\hess{x, w}{M}$ is smooth and 
		$$\dim(\hess{x, w}{M})  =  \ell(w)  + |\Phi_0^+|- |\{ \gamma \in \Phi_{\geq2} \mid w^{-1}(\gamma) \notin \Phi_M \}|.$$ 
		\item $\hess{x, w}{M}$ is a vector bundle over $X_U$ with 
		 dimension of its fiber equal to 
		 $$\ell(w) - |\{\gamma\in \Phi_{\geq3} \mid w^{-1}(\gamma) \notin \Phi_M \}|.$$  % true for I
			\end{enumerate}
			\end{enumerate}
\end{prop}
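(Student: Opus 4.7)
I would model the proof on \cite{dCLP1988}, using the $\C^*$-action via $\lambda$ as the central tool. The crucial initial observation is that $\hess{x}{M}$ is $\lambda$-stable: for $gB \in \hess{x}{M}$ and $z \in \C^*$, $(\lambda(z)g)^{-1}.x = z^{-2}g^{-1}.x$ since $x \in \fg_2$, and this lies in $M$ iff $g^{-1}.x$ does because $M$ is a linear subspace. Hence $\hess{x,w}{M} = \hess{x}{M} \cap \cp_w$ is $\C^*$-stable inside $\cp_w$. Under the identification $\cp_w^{\C^*} = G_0 w B/B \simeq \cb_0$ from \S\ref{P_orbits}, its $\C^*$-fixed points are the $gB_0$ with $g \in G_0$ and $g^{-1}.x \in w.M$; since $G_0$ preserves the grading we have $g^{-1}.x \in \fg_2$, so this set is $\{gB_0 : g^{-1}.x \in w.M \cap \fg_2 = U\} = X_U$.

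Part (1) would then follow immediately: every point of $\cp_w$ has a limit in $\cp_w^{\C^*}$ as $z \to 0$ because $\pi_w$ has positive-weight fibers, so the closed $\C^*$-stable subvariety $\hess{x,w}{M}$ is nonempty iff its fixed set $X_U$ is, i.e., iff $U \in \ideals^{gen}_2$.

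For (2), I would show directly that $\pi_w$ restricts to a vector bundle $\hess{x,w}{M} \to X_U$. Parameterize $\pi_w^{-1}(gB_0)$ by $v \mapsto g\exp(v)wB$ with $v$ ranging over $V_w := \bigoplus_{\beta \in \Phi^+ \cap w(\Phi^-)} \fg_\beta$; the defining condition becomes $\exp(-v)y \in w.M$, where $y := g^{-1}.x \in U$. Decomposing $v = \sum_{i \geq 1} v^{(i)}$ into $\ad(h)$-eigencomponents and expanding, the $\fg_n$-component of $\exp(-v)y$ equals $-[v^{(n-2)}, y]$ plus a universal polynomial in $v^{(1)}, \dots, v^{(n-3)}$, so the defining system decouples by degree and is linear in the top-degree variable $v^{(n-2)}$. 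Proceeding inductively on $n \geq 3$ and using the $\mathfrak{sl}_2$-representation theory of the triple completing $y$ together with Lemma \ref{lemma.coset-ideals}, I would show that
\[
\ad(y) : \bigoplus_{\beta \in \Phi^+ \cap w(\Phi^-) \cap \Phi_{n-2}} \fg_\beta \longrightarrow \fg_n/(w.M \cap \fg_n)
\]
is surjective for every such $y$. Each degree-$n$ condition then carves out an affine subspace of $V_w \cap \fg_{n-2}$ of codimension $|\{\gamma \in \Phi_n : w^{-1}(\gamma) \notin \Phi_M\}|$, and assembling these realizes the fiber over $gB_0$ as an affine space of dimension $\ell(w) - |\{\gamma \in \Phi_{\geq 3} : w^{-1}(\gamma) \notin \Phi_M\}|$, varying algebraically over $X_U$. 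This proves (2b). The dimension formula in (2a) follows by adding $\dim X_U = |\Phi_0^+| - |\{\gamma \in \Phi_2 : w^{-1}(\gamma) \notin \Phi_M\}|$ from \eqref{dim_building}, and smoothness follows from the vector bundle structure over smooth $X_U$.

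The main obstacle will be to establish the $\ad(y)$-surjectivity \emph{uniformly} over all $y$ arising from points of $X_U$, not merely for $y$ in the dense $G_0$-orbit of $\fg_2$, since constant rank across $X_U$ is what distinguishes a genuine vector bundle from a mere affine fibration. I expect this to follow by combining the $\mathfrak{sl}_2$-structure on $\fg$ attached to $\{x,h,y\}$ with the compatibility between $U$ and the grading guaranteed by Lemma \ref{lemma.coset-ideals}, paralleling and extending the Springer-fiber argument of \cite[\S2]{dCLP1988}.
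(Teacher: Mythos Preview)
Your approach is viable but inverts the paper's logic. The paper first establishes 2(a) by identifying $\hess{x,w}{M}$ with $\{pB_w : p^{-1}.x \in w.M \cap \fg_{\geq 2}\} \subset P/B_w$, where $B_w = P \cap wBw^{-1}$, and invoking \cite[Lemma~2.2]{dCLP1988}: since $x$ lies in the dense $P$-orbit on the prehomogeneous space $\fg_{\geq 2}$, the preimage of the $B_w$-stable linear subspace $w.M \cap \fg_{\geq 2}$ is automatically smooth of the expected codimension. With smoothness already in hand, 2(b) then follows from \cite[\S1.5]{dCLP1988} (see also \cite[Theorem~1.9]{Bass-Haboush85}): a smooth closed $\C^*$-stable subvariety of $\cp_w$ is a vector sub-bundle over its fixed locus $X_U$, because $\lambda$ acts with strictly positive weights on the fibers of $\pi_w$. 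No fiber-by-fiber computation is needed. Regarding your stated obstacle, it in fact dissolves readily: for $n \geq 3$ one has $\fg_{n-2} = (V_w \cap \fg_{n-2}) \oplus (w.\fu \cap \fg_{n-2})$, and since $y \in w.M$ and $[\fb, M] \subset M$ we get $\ad(y)(w.\fu \cap \fg_{n-2}) \subset w.M \cap \fg_n$; combined with the surjectivity of $\ad(y)\colon \fg_{n-2} \to \fg_n$ (valid for every $y \in \co_x \cap \fg_2$, as any such $y$ fits into an $\mathfrak{sl}_2$-triple with the same $h$), your restricted surjectivity follows uniformly in $y$. The paper's route is cleaner and entirely conceptual; yours gives an explicit parametrization of the fibers at the cost of more bookkeeping.
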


\begin{proof}  
	First, we prove 2(a).
	By definition \eqref{definition.varieties}, 
	$$\hess{x, w}{M} = \left\{ pwB, p \in P  \mid w^{-1}p^{-1}.x \in M \right\}.$$
	Since $p^{-1}.x  \in \fg_{\geq 2}$ for $p \in P$, this can be rewritten as
	$$ \hess{x, w}{M} =\left\{ pwB \mid p^{-1}.x \in w.M\cap \fg_{\geq2} \right\}.$$
	
     The setup in \cite[\S 2.1]{dCLP1988} now applies to  
     the $P$-prehomogeneous space $\fg_{\geq 2}$, 
     the linear subspace of $\fg_{\geq 2}$ equal to $w.M \cap \fg_{\geq 2}$, and the closed subgroup of $P$ equal to 
     $B_w:= P \cap wBw^{-1}$.  
     The subspace  $w.M \cap \fg_{\geq 2}$ is  $B_w$-stable since $M$ is $B$-stable
     and $\fg_{\geq 2}$ is $P$-stable. 
 	Then $\hess{x, w}{M}$ is isomorphic to the subvariety of $P/B_w$ given by
 	$$\{ pB_w \mid p^{-1}.x \in w.M \cap \fg_{\geq2} \},$$
 	which is smooth by Lemma 2.2 in {\it loc. cit.},
 	and its dimension equals
	  $$\dim(P/B_w) -\dim ( \fg_{\geq2}/w.M \cap \fg_{\geq2}).$$
     	Simplifying the dimension formula above using the fact that $\dim(P/B_w) = |\Phi_0^+|+\ell(w)$, 
     	we get
     \begin{eqnarray}\label{eqn.dim1}
     	\dim (\hess{x, w}{M} ) =  |\Phi_0^+|+\ell(w) - |\{ \gamma \in \Phi_{\geq2} \mid w^{-1}(\gamma) \notin \Phi_M\}|.
     \end{eqnarray}
 This completes the proof of 2(a).

   Recall the cocharacter $\lambda$ from \S \ref{P_orbits}.   
   Since $\lambda(z) \cdot x= z^2x$,
   the smooth closed subvariety $\hess{x, w}{M}$ of $\cp_w$ is $\C^*$-stable. 
     Now we can use the result in~\cite[\S 1.5]{dCLP1988} (see also \cite[Theorem 1.9]{Bass-Haboush85}):
     let $\pi_w: \cp_w \to G_0wB$ be the vector bundle map from \S \ref{P_orbits}. 
     Since the $\C^*$-action on $\cp_w$ preserves the fibers of $\pi_w$ and acts with strictly positive weights, 
     it follows that $$\hess{x, w}{M}  \to G_0wB \cap \hess{x, w}{M}$$ is a vector sub-bundle of 
     $\pi_w$.
     Finally, $$G_0wB \cap \hess{x, w}{M} \simeq  \{ g B_0 \in G_0/B_0 \mid g^{-1}.x \in w.M \cap \fg_2\} = X_U$$
     as in  \S 3.7 of {\it loc. \!cit.}
	
By \eqref{dim_building},
\begin{eqnarray*}
\dim X_{U} &=& \dim (G_0/B_0) - \dim( \fg_2/U)\\
&=& |\Phi_0^+| - |\{ \gamma \in \Phi_2 \mid w^{-1}(\gamma) \notin \Phi_{M} \}|.
\end{eqnarray*}
Subtracting this value from the one in~\eqref{eqn.dim1} completes the proof of 2(b).
	
Finally, the proof of (1).   If $\hess{x, w}{M}$ is nonempty, then by 2(b) it follows that 
$X_U$ is nonempty,  so it contains some $gB_0$ for $g \in G_0$, i.e.,  $g^{-1}. x \in U$.  
Hence the $G$-orbit of $x$ meets $U$ and $U \in \ideals_{2}^{gen}.$
Conversely, if $U \in \ideals_{2}^{gen}$ then $g^{-1}. x \in U$ for some $g \in G_0$, which means $gB \in \hess{x, w}{M}$.
\end{proof}

The centralizer $Z_G(x)$ acts on $\hess{x}{M}$ and this gives
an action of the component group $A(x) = Z_G(x)/Z^\circ_G(x)$ 
on the Borel-Moore homology $H_*(\hess{x}{M})$ of $\hess{x}{M}$ 
since the induced action of a connected group is trivial.

There is also an action of $A(x)$ on $H_*(\hess{x, w}{M})$ and $H_*(X_U)$.  
Namely, $Z_G(x) = Z_P(x)$  \cite[Proposition 5.9]{Jantzen} and $Z_P(x)$ acts on $\hess{x, w}{M}$, so $A(x)$ acts on $H_*(\hess{x, w}{M})$.
Set  $L = G_0$, then the centralizer $Z_L(x)$ acts on $X_U$.  Since $Z_L(x)$  is a Levi factor of $Z_P(x)$, then $A(x) \simeq Z_L(x)/Z_L^\circ(x)$ acts on $H_*(X_U)$. 

For each $w\in \cosets$, let $t_M(w) = \ell(w) - |\{\gamma\in \Phi_{\geq3} \mid w^{-1}(\gamma) \notin \Phi_M \}|$,
the dimension of the fiber of the vector bundle from Proposition~\ref{prop.P-orbits}.
The following corollary is implicit in \cite{dCLP1988}.

\begin{cor} \label{relate_cohom}
Let $w \in \cosets$ and $U = w.M \cap \fg_2$.  
Let $U \in  \ideals^{gen}_{2}$.  
Then  as $A(x)$-modules, we have the isomorphism
\begin{equation}
H_{j+2t_M(w)}(\hess{x, w}{M}) \simeq H_{j} (X_{U})
\end{equation}
for all $j \in \mathbb N$.
\end{cor}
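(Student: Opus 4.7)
The plan is to apply the Thom isomorphism for Borel--Moore homology to the vector bundle established in Proposition~\ref{prop.P-orbits}(2b). If $E \to X$ is a complex vector bundle of rank $r$ (here $r = t_M(w)$), then there is a natural isomorphism $H_{j+2r}(E) \simeq H_{j}(X)$ for all $j$, obtained by combining the homotopy equivalence between $E$ and its zero section with the fact that Borel--Moore homology of $\C^r$ is concentrated in degree $2r$. Applied to $\pi_w|_{\hess{x,w}{M}} \colon \hess{x,w}{M} \to X_U$ this produces the desired isomorphism of vector spaces; it remains only to upgrade it to an isomorphism of $A(x)$-modules.

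To do this I would verify that the vector bundle map $\pi_w|_{\hess{x,w}{M}} \colon \hess{x,w}{M} \to X_U$ is equivariant for a suitable action of $Z_G(x)$. Recall $Z_G(x) = Z_P(x)$, which has Levi decomposition $Z_P(x) = Z_L(x) \ltimes Z_{U_P}(x)$ where $L = G_0$. The bundle map $\pi_w \colon \cp_w \to \cb_0$ is defined by the limit \eqref{eqn.bundle} along the cocharacter $\lambda$, and since $L$ is the centralizer of $\lambda(\C^*)$, the group $Z_L(x)$ commutes with $\lambda$ and so acts equivariantly via the usual action on $\cb_0 = G_0 w B /B$. The unipotent factor $Z_{U_P}(x) \subset U_P$ acts trivially on $\cp_w^{\C^*} \simeq \cb_0$ and thus acts along the fibers of $\pi_w$. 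Hence $\pi_w$ is $Z_P(x)$-equivariant if $Z_P(x)$ acts on $X_U$ through the projection $Z_P(x) \twoheadrightarrow Z_L(x)$, and restricting to $\hess{x,w}{M}$ gives an equivariant vector bundle.

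Since Borel--Moore homology is functorial and the Thom isomorphism is constructed from natural operations (pullback along the zero section, cap product with a Thom class), the induced map $H_{j+2t_M(w)}(\hess{x,w}{M}) \to H_{j}(X_U)$ is $Z_P(x)$-equivariant. As both $Z_P^\circ(x)$ and $Z_L^\circ(x)$ act trivially on Borel--Moore homology, and the Levi decomposition gives $A(x) = Z_P(x)/Z_P^\circ(x) \simeq Z_L(x)/Z_L^\circ(x)$ (as already noted in the discussion preceding the corollary), the isomorphism descends to an isomorphism of $A(x)$-modules, completing the proof.

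The main obstacle is the bookkeeping with the two a priori different $A(x)$-actions: checking that the $Z_P(x)$-action on $\hess{x,w}{M}$ and the $Z_L(x)$-action on $X_U$ are intertwined by $\pi_w$ through the Levi projection. Once this compatibility is in place, the cohomological shift is formal from the rank $t_M(w)$ computed in Proposition~\ref{prop.P-orbits}(2b).
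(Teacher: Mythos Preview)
Your proof is correct and follows essentially the same route as the paper. Both use the Thom isomorphism coming from the rank-$t_M(w)$ vector bundle of Proposition~\ref{prop.P-orbits}(2b) for the vector-space statement, and both obtain $A(x)$-equivariance by observing that $Z_L(x)$ commutes with the cocharacter $\lambda$ and hence with $\pi_w$; the paper phrases the unipotent bookkeeping as ``the $Z_P(x)$- and $Z_L(x)$-actions on $\hess{x,w}{M}$ induce the same $A(x)$-action,'' while you equivalently push $Z_{U_P}(x)$ into the fibers via the Levi projection.
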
 
\begin{proof}
The isomorphism as vector spaces follows from the vector bundle result in Proposition 2(a).  Now the action of $Z_P(x)$ and $Z_L(x)$
on $\hess{x, w}{M}$ induce the same action of $A(x)$.  Since the $Z_L(x)$-action commutes with the $\C^*$-action coming from $\lambda$,
the action of $\ell \in Z_L(x)$ commutes with the map $\pi_w$ defined in \eqref{eqn.bundle} and the result follows.
\end{proof}

We also need the following crucial result from \cite{dCLP1988}.  
This is proved by a reduction to distinguished nilpotent orbits, 
where the classical cases are handled by explicit computation and the exceptional groups are handled by a method that we review in \S\ref{sec.modular_law}.

\begin{thm}[Theorem 3.9 in \cite{dCLP1988}] \label{no_odd_dclp}
Let $U \in  \ideals^{gen}_{2}$.  
Then $H_i(X_U)=0$ for $i$ odd.
\end{thm}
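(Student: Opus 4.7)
The plan is to reduce to the case where $\co_x$ is a distinguished nilpotent orbit, and then to handle the distinguished case by either an explicit cellular decomposition (classical types) or the geometric-modular-law technique that the paper develops in Section~\ref{sec.modular_law} (exceptional types).

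First I would set up the reduction. If $\co_x$ is not distinguished, Bala--Carter / Lusztig--Spaltenstein theory produces a proper Levi subgroup $L \subsetneq G$ containing $T$ and a distinguished nilpotent orbit $\co' \subset \fl$ from which $\co_x$ is induced. Choosing an $\mathfrak{sl}_2$-triple for $x' \in \co'$ compatible with the one fixed for $x$, the $\Z$-grading on $\fg$ restricts to an analogous grading on $\fl$, and $L_0 := L \cap G_0$ is a Levi subgroup of $G_0$. A version of Proposition~\ref{prop.P-orbits} applied internally to $G_0$ and its parabolic subgroup with Levi $L_0$ stratifies $X_U$ by locally closed pieces that are vector bundles over analogous building blocks $X'_{U'}$ built from $(L, \co')$. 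Since the fibers are affine spaces and, by induction on $\dim G$, the bases $X'_{U'}$ have only even Borel--Moore homology, a long exact sequence along the stratification forces the same for $X_U$.

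In the distinguished case the grading on $\fg$ is even, $\Phi_m = \emptyset$ for odd $m$, and $G_0$ has particularly small semisimple rank. For the classical types one has an explicit description of $X_U$ as a space of partial flags satisfying rank and incidence conditions read off from $U$; a direct row-column analysis of Young tableaux then produces a $B_0$-equivariant paving by affine cells, which immediately forces the vanishing of odd Borel--Moore homology.

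For distinguished orbits in exceptional types no uniform combinatorial model is available, and I would instead argue by induction on the codimension of $U$ in $\fg_2$. The base case $U = \fg_2$ gives $X_U = \cb_0$, whose homology is concentrated in even degrees by the Bruhat decomposition of $G_0/B_0$. For the inductive step, I would choose a chain $\fg_2 = U_0 \supsetneq U_1 \supsetneq \cdots \supsetneq U_m = U$ of $B_0$-stable subspaces with codimension one at each step, and at each stage apply the $G_0$-analogue of the geometric modular law from Proposition~\ref{geom_modular_fiber} to relate $H_*(X_{U_{i+1}})$ to $H_*(X_{U_i})$ and an auxiliary term that can be assumed even by induction. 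The hard part will be to arrange the chain of modular-law triples so that the genericity condition $U_i \cap \co_x \neq \emptyset$ is preserved at every stage, and to reduce the inductive argument to a small number of minimal base cases that must be checked by direct geometric computation in each exceptional type.
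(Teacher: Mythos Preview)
Your outline matches the strategy the paper attributes to \cite{dCLP1988}: reduce to distinguished orbits, handle classical types by explicit cell decompositions, and handle exceptional types by the modular-law relation among the $X_U$'s. The paper does not give its own proof here; it only records this summary and cites the original.

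One warning about your inductive step in the exceptional case: you invoke ``the $G_0$-analogue of the geometric modular law from Proposition~\ref{geom_modular_fiber}.'' As written in this paper, the proof of Proposition~\ref{geom_modular_fiber} explicitly relies on Corollary~\ref{odd_vanishing}, which in turn rests on Theorem~\ref{no_odd_dclp} --- the very statement you are proving. So citing Proposition~\ref{geom_modular_fiber} would be circular. What saves you is that in the building-block setting the element $x$ lies in the dense $G_0$-orbit on $\fg_2$, so the relevant $X_{U_i}$ are smooth (as noted in \S\ref{building_blocks}). In that smooth situation the original \cite[Lemma~2.11]{dCLP1988} applies directly, via the blow-up description, without any appeal to odd vanishing. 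You should cite that lemma, not Proposition~\ref{geom_modular_fiber}.

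A second point: your last sentence is exactly where the real content lies. Arranging the chain of triples so that genericity is preserved and reducing to a checkable set of base cases is not automatic, and in \cite{dCLP1988} it is done type by type for each distinguished orbit in $F_4, E_6, E_7, E_8$. Your proposal correctly identifies this as the hard part but does not indicate how to carry it out; a complete proof would need those case analyses.
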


For $U \in \ideals^{gen}_2$, define a subset $W_{M,U}$ of $\cosets$ and polynomial $g_{M,U}(q)$ by 
\begin{eqnarray}\label{eqn.WIJ}
	W_{M,U}&=& \{w \in \cosets \mid  U \!= w.M \cap \fg_2 \}  \text{ and }\\
\nonumber	g_{M,U}(q) &=& \sum_{w \in W_{M,U} } q^{t_M(w)}.
\end{eqnarray}
Let $\cv_{M,U}$ be a graded vector space whose Poincar\'e polynomial is $g_{M,U}(q^2)$.  We consider $\cv_{M,U}$ as an $A(x)$-module with trivial action.

\begin{cor}\label{cor.P-orbits} \label{odd_vanishing}
	We have 
\begin{eqnarray}\label{eqn.U-decomp}
	H^*(\hess{x}{M}) \simeq   \displaystyle \bigoplus_{U \in \ideals^{gen}_2}  \cv_{M,U} \otimes H^*(X_U),
\end{eqnarray}
as $A(x)$-modules.  In particular, 	$H^i(\hess{x}{M}) =0$ for $i$ odd.
\end{cor}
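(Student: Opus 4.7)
The plan is to exploit the $P$-orbit stratification of $\cb$ to stratify $\hess{x}{M}$, and then transport everything to the building blocks $X_U$, whose Borel-Moore homology is already controlled by Theorem~\ref{no_odd_dclp}.

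First, I would note that $\hess{x}{M} = \bigsqcup_{w \in \cosets} \hess{x,w}{M}$ is a finite decomposition into locally closed smooth subvarieties; the piece indexed by $w$ is empty unless $U := w.M \cap \fg_2$ belongs to $\ideals^{gen}_{2}$, by Proposition~\ref{prop.P-orbits}(1), in which case it is a complex vector bundle of rank $t_M(w)$ over the smooth projective variety $X_U$, by Proposition~\ref{prop.P-orbits}(2)(b). The Thom isomorphism in Borel-Moore homology (the re-indexed form of Corollary~\ref{relate_cohom}) gives an $A(x)$-equivariant isomorphism
$$H_i(\hess{x,w}{M}) \;\simeq\; H_{i - 2t_M(w)}(X_U),$$
where $H_*$ denotes Borel-Moore homology throughout. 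Combined with Theorem~\ref{no_odd_dclp}, each $H_*(\hess{x,w}{M})$ is concentrated in even degrees.

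Next, I would assemble $\hess{x}{M}$ stratum by stratum using any linear refinement of the closure order on the $P$-orbits (for example, by $\ell(w)$), so that each step adjoins a stratum that is open in the current union. For each closed inclusion $Z \hookrightarrow Y$ with open complement $V$, the $A(x)$-equivariant long exact sequence in Borel-Moore homology reads
$$\cdots \to H_i(Z) \to H_i(Y) \to H_i(V) \to H_{i-1}(Z) \to \cdots.$$
Because the connecting maps change parity, an induction on strata shows that the sequence degenerates into short exact sequences
$$0 \to H_i(Z) \to H_i(Y) \to H_i(V) \to 0$$
that are nontrivial only in even $i$. Since $A(x)$ is finite and we work over $\C$, the category of finite-dimensional $\C[A(x)]$-modules is semisimple, so every such short exact sequence splits $A(x)$-equivariantly. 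Iterating yields
$$H_*(\hess{x}{M}) \;\simeq\; \bigoplus_{w \in \cosets} H_*(\hess{x,w}{M})$$
as $A(x)$-modules, with odd-degree vanishing.

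Finally, $\hess{x}{M}$ is closed in $\cb$, hence projective, so its Borel-Moore and singular homology agree; the $\C$-linear duality $H^i(Y;\C) \simeq H_i(Y;\C)^*$ (as $A(x)$-modules, with the contragredient action) transfers the decomposition to cohomology with no change of grading. Grouping the summands by $U \in \ideals^{gen}_{2}$ and using the definitions of $W_{M,U}$ and $\cv_{M,U}$, the collected shifts $2t_M(w)$ for $w \in W_{M,U}$ are precisely recorded by the grading of the trivial $A(x)$-module $\cv_{M,U}$, yielding \eqref{eqn.U-decomp}. Odd vanishing of $H^*(\hess{x}{M})$ follows immediately. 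The main obstacle is making the successive splittings in the stratification $A(x)$-equivariant; this rests on the parity argument powered by Theorem~\ref{no_odd_dclp}, which forces every connecting homomorphism to vanish, combined with the semisimplicity of $\C[A(x)]$, which supplies the equivariant splittings for free.
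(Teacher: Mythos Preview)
Your proposal is correct and follows essentially the same approach as the paper: stratify $\hess{x}{M}$ by the $P$-orbits (what the paper calls an $\alpha$-partition in the sense of \cite{dCLP1988}), use Corollary~\ref{relate_cohom} and Theorem~\ref{no_odd_dclp} to see each stratum has vanishing odd Borel-Moore homology, run the long exact sequence with vanishing connecting maps, and then pass to singular cohomology via projectivity. The only cosmetic difference is that you invoke semisimplicity of $\C[A(x)]$ to split the short exact sequences $A(x)$-equivariantly, whereas the paper phrases this as following from the naturality of the long exact sequence together with Corollary~\ref{relate_cohom}; both justifications are valid.
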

\begin{proof}
As in \cite{dCLP1988}, $\hess{x}{M}$ admits an $\alpha$-partition of the nonempty $\hess{x, w}{M}$.
Since each $\hess{x, w}{M}$ has no odd Borel-Moore homology by Theorem \ref{no_odd_dclp} and Corollary \ref{relate_cohom}, 
the long exact sequence in Borel-Moore homology gives the isomorphism as vector spaces.  Then the naturality of the long exact sequence
and Corollary \ref{relate_cohom} yield the isomorphism as $A(x)$-modules.	 
Since $\hess{x}{M}$ and the $X_U$ are projective varieties, 
the Borel-Moore homology and singular cohomology coincide, and the result follows.
\end{proof}

\begin{Rem} \label{Property S}
	In fact, the varieties $\hess{x}{M}$ satisfy Property (S) from \S 1.7 in \cite{dCLP1988}.   Namely, switching to Borel-Moore  integral homology, we have 
	 $H_{i}(\hess{x}{M}) = 0$ for $i$ odd, $H_{i}(\hess{x}{M})$ has no torsion for $i$ even, and the cycle map from the $i$-th Chow group
	 of $\hess{x}{M}$  to $H_{2i}(\hess{x}{M})$ is an isomorphism.    These results follow from the fact that $X_U$ has Property (S)
	 for all $U \in  \ideals^{gen}_{2}$ as shown in \cite{dCLP1988} and Lemmas 1.8 and 1.9 in {\it loc.\! cit.}  
\end{Rem}

Let
\begin{equation} \label{modified_poincare}
%\mathsf P(X) = \sum_j \dim_{\mathbb Q} \left( H^{2j}(X) \right) q^j
\mathsf P(X) = \sum_j \dim \left( H^{2j}(X) \right) q^j
\end{equation}
denote the modified Poincar\'e polynomial of  a variety $X$ with no odd cohomology.
If a group $K$ acts on $X$ and $\chi  \in \Irr(K)$, let 
%$$\mathsf{P}(X; \chi) = \sum_j \mult_{K}(\chi,H^{2j}(X) )q^j,$$ 
$$\mathsf{P}(X; \chi) = \sum_j \left(\chi\!:H^{2j}(X) \right) q^j,$$ 
where $(\chi\!:\!\chi') = \dim\Hom_K(\chi, \chi')$ denotes the multiplicity of $\chi$ in 
the $K$-representation $\chi'$.
Then Corollary \ref{eqn.U-decomp} immediately implies the following.

\begin{cor} \label{cor:poincare_decomp}
	We have 
	\[
	\mathsf{P}(\hess{x}{M}) = \sum_{U \in \ideals_2^{gen}} g_{M,U}(q) \mathsf{P}(X_{U}).
	\]
	and 
	\begin{eqnarray}\label{Frobenius_identity}
			\mathsf{P}(\hess{x}{M}; \chi) = \sum_{U \in \ideals_2^{gen}} g_{M,U}(q) \mathsf{P}(X_{U}; \chi).
	\end{eqnarray}
for all $\chi \in  \Ar$.
\end{cor}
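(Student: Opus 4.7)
The plan is to derive both identities directly from the graded $A(x)$-module decomposition already established in Corollary \ref{cor.P-orbits}:
\[
H^*(\hess{x}{M}) \simeq \bigoplus_{U \in \ideals^{gen}_2}  \cv_{M,U} \otimes H^*(X_U).
\]
Since the cohomology on both sides vanishes in odd degrees (Corollary \ref{odd_vanishing}, together with Theorem \ref{no_odd_dclp}), all Poincaré polynomials in question are polynomials in $q$ under the substitution $q^2 \leftrightarrow q$ implicit in the definition \eqref{modified_poincare}.

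First I would compute the graded dimension of the left-hand side degree by degree. By definition of $\cv_{M,U}$, its Poincaré polynomial (in the variable $q$ of \eqref{modified_poincare}, so tracking cohomological degree divided by $2$) is exactly $g_{M,U}(q)$, since $\cv_{M,U}$ was declared to have Poincaré polynomial $g_{M,U}(q^2)$ in the ungraded sense. Extracting the coefficient of $q^j$ from the isomorphism and summing yields
\[
\dim H^{2j}(\hess{x}{M}) = \sum_{U \in \ideals_2^{gen}} \sum_{i+k=j} [q^i]\,g_{M,U}(q) \cdot \dim H^{2k}(X_U),
\]
which repackages as the identity $\mathsf P(\hess{x}{M}) = \sum_{U} g_{M,U}(q)\,\mathsf P(X_U)$, proving the first formula.

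For the second formula, the same isomorphism is $A(x)$-equivariant, and $\cv_{M,U}$ carries the trivial $A(x)$-action. Therefore for each $\chi \in \Ar$ one has
\[
(\chi : H^{2j}(\hess{x}{M})) = \sum_{U \in \ideals_2^{gen}} \sum_{i+k=j} [q^i]\,g_{M,U}(q) \cdot (\chi : H^{2k}(X_U)),
\]
because $\Hom_{A(x)}(\chi, \cv_{M,U}\otimes H^{2k}(X_U)) \simeq \cv_{M,U}\otimes \Hom_{A(x)}(\chi, H^{2k}(X_U))$ when $\cv_{M,U}$ has trivial action. Summing with weights $q^j$ yields \eqref{Frobenius_identity}.

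There is no real obstacle here: the corollary is a purely formal consequence of Corollary \ref{cor.P-orbits} once one unwinds the definitions of $g_{M,U}(q)$, $\cv_{M,U}$, and $\mathsf P$. The only points requiring care are matching the grading conventions (the factor of $2$ between Borel--Moore/cohomological degree and the variable $q$ in $\mathsf P$) and noting that the trivial $A(x)$-action on the multiplicity spaces $\cv_{M,U}$ is what allows the multiplicity of each $\chi$ to factor cleanly through the tensor product.
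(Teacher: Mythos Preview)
Your proposal is correct and is precisely the paper's approach: the paper simply states that the corollary follows immediately from the graded $A(x)$-module decomposition in Corollary~\ref{cor.P-orbits}, and your write-up just makes explicit the unwinding of definitions that this entails.
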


\begin{Rem} \label{computation1}
	Corollary \ref{cor:poincare_decomp}  gives a way to compute $\mathsf{P}(\hess{x}{M})$ and $\mathsf{P}(\hess{x}{M}; \chi)$, which is feasible in small ranks.  There are three steps:  (1) determine the set $\ideals^{gen}_2$, (2) compute
	$\mathsf{P}(X_{U}; \chi)$ for each $U \in  \ideals^{gen}_2$, and (3) compute $g_{M,U}$.  
	
	For (1), methods from Fenn's thesis \cite{FennThesis} apply and Fenn and the second author wrote computer code to do this, which works up to rank 10.  For (3) the second author wrote code that is efficient up to rank 7 and can handle rank $8$ for the cases where $M \in \ideals$.    
	For (2), however, we only know how to do the computation in type $A$, small rank cases, and for certain nilpotent orbits in all cases.  The method for doing this comes from \cite{dCLP1988}, which we explain in \S\ref{sec.modular_law}.
		We have carried out this computation in several small rank cases 
	and give the example of $B_3$ for $M \in \ideals$ in \S\ref{section:example}. 
\end{Rem}

%%%%%%%%%%%%%%%%%%%%%%%%%%%%

\section{Proof of Theorem \ref{ThmA}} \label{sec.proof_thmA}

We now restrict to the case where $M=I$ for $I \in \ideals$ and return to the map 
$\mu^I: G \times^B I \to \overline{\co_I}$ from 
the introduction.  Pushing forward the shifted constant sheaf yields
\begin{eqnarray}\label{eqn.gen_Springer.sheaf2}
	R\mu^I_*(\sh[N_I]) \simeq \bigoplus_{(\co, \cl)} IC(\bar{\co}, \cl)\otimes V^I_{\co, \cl}.
\end{eqnarray}
where the sum is over pairs $(\co,\cl) \in \allpairs$ 
consisting of a nilpotent orbit $\co \subset \overline{\co}_I$ and an irreducible local system $\cl$ on $\co$.
The $V^I_{\co, \cl}$ are graded complex vectors spaces.
In this section we will prove Theorem \ref{ThmA}:
if $V^I_{\bpairs} \neq 0$ in  \eqref{eqn.gen_Springer.sheaf2}, then $(\co, \cl)$ is an element of $\sppairs$,
the pairs that arise in the Springer correspondence~\eqref{eqn.Springer.correspondence}.

Taking stalks in  \eqref{eqn.gen_Springer.sheaf2} 
and using proper base change, the cohomology of the fibers of $\mu^I$ 
and local intersection cohomology are related by
\begin{equation}\label{proper_base1}
	H^{j+N_I}(\hess{x}{I}, \C)  = 
	\bigoplus_{\substack{(\co,\cl) \\ \co \subset \overline{\co}_I} }  
	\ch^{j}_x \left(IC(\bar{\co},\cl) \otimes V^I_{\co, \cl} \right)
\end{equation}
for $x \in \bar{\co_I}$.
We would like to write down the $A(x)$-equivariant version of this statement
as discussed in \cite{Borho-MacPherson1983} (see also \cite{Achar-book}).

For $x\in \cn$ and $\chi \in \Ar$, we sometimes write $(x,\chi)$ in place of $(\co_x, \cl_\chi)$.   
Given $(x, \chi) \in \allpairs$,  
we define the Laurent polynomial 
$m^I_{x, \chi}(q) \in \mathbb{N}[q,q^{-1}]$ 
so that the coefficient of $q^j$ is the dimension of the $j$-th graded component of $V^I_{x, \chi}$.
By the properties of perverse sheaves,  we have  $m^I_{x, \chi}(q^{-1}) = m^I_{x, \chi}(q)$. 
Let $\left(\cl_\chi: - \right)$ denote the multiplicity of $\cl_\chi$ in a local system on $\co_x$.
For $(x,\chi)$ and $(u, \phi)$ in $\allpairs$,
define 
$$c_{x,\chi}^{u, \phi}(q) =	\sum_{j \in \Z} \left(\cl_\chi: \ch^{j}IC(\bar{\co}_u,\cl_\phi)|_{\co_x} \right)q^j.$$

Then taking the dimension on both sides of the $A(x)$-equivariant version of \eqref{proper_base1} gives
\begin{equation}\label{poincare_decomp} 
	\mathsf P(\hess{x}{I}; \chi)(q^2)  =  q^{N_I} \sum_{(u,\phi)  \in \allpairs}  c_{x,\chi}^{u, \phi}(q) m^I_{u, \phi}(q). 
\end{equation}

Now suppose we know that $P(\hess{x}{I}; \chi)=0$ for some $\chi \in \Ar$.  Then since 
$c_{x,\chi}^{x, \chi} \neq 0$ and all the coefficients on the right side of \eqref{poincare_decomp} are nonnegative,
we conclude that $m^I_{x, \chi}=0$ and thus $V^I_{x, \chi} = 0$.  Hence, the proof of Theorem \ref{ThmA} 
is reduced to showing 
that  $	\mathsf P(\hess{x}{I}; \chi)=0$ for $(x,\chi) \not \in \sppairs$.

Next, by Corollary \ref{cor:poincare_decomp}, if we can show that 
$\mathsf P(X_U; \chi)=0$ for all $(x,\chi) \not \in \sppairs$ and all $U \in \ideals^{gen}_{2}$,
then it follows that $\mathsf P(\hess{x}{I}; \chi)=0$ for all $(x,\chi) \not \in \sppairs$.

For the case of $I =\fu$, which is the Springer resolution case, there is a stronger equivariant statement.
The Weyl group $W$ acts on $H^*(\cb_x)$ and this action commutes with the $A(x)$-action, so $W \times A(x)$ acts.
For $(x,\chi)$ and $(u, \phi)$ in $\sppairs$, 
\begin{equation}\label{poincare_decomp_W} 
	\mathsf P(\cb_x; V_{u, \phi} \otimes \chi)(q^2)  =  q^N c_{x,\chi}^{u, \phi}(q) 
\end{equation}
where $V_{u,\phi}$ is the irreducible $W$-representation corresponding to $(u,\phi)\in \sppairs$ 
in~\eqref{eqn.Springer.correspondence}.
Moreover, the left side (and the right side) will always be zero for those $(x, \chi) \not \in \sppairs$.
This was first established by Beynon--Spaltenstein by computer calculation in \cite{Beynon-Spaltenstein1984} for the exceptional groups
and by Shoji in \cite{Shoji1983} for the classical groups.  
Later, Lusztig gave a uniform framework that also included handling his Generalized Springer correspondence \cite{Lusztig1986}.
The resulting algorithm from \cite[\S 24]{Lusztig1986} is now known as the Lusztig--Shoji algorithm and it computes either side
of \eqref{poincare_decomp_W} knowing only the partial order on the orbits in $\cn$, the component groups $A(x)$, the Springer
correspondence, and the character table of $W$.

Returning to the proof of Theorem \ref{ThmA}, since the left side of \eqref{poincare_decomp_W} is zero for
$(x, \chi) \not \in \sppairs$, it follows that $P(\cb_x; \chi) = 0$  for $(x, \chi) \not \in \sppairs$. 
Therefore, if $(x,\chi) \not \in \sppairs$ and $U \in \ideals^{gen}_{2}$, then 
$\mathsf P(X_U; \chi)=0$ whenever $X_U$ appears in the decomposition of $H^*(\cb_x)$ in Corollary \ref{odd_vanishing}.
We can therefore finish the proof of the theorem if we can show that 
$X_U$ appears in the decomposition of $H^*(\cb_x)$ for all $U \in \ideals^{gen}_{2}$.  
In other words, we will show $g_{\fu, U} \neq 0$ for all $U \in \ideals^{gen}_{2}$, or equivalently, 
$W_{\fu, U} \neq 0$.  
This amounts to showing that, for all $U\in \ideals_2^{gen}$, there exists $w \in  \cosets$ such that $U = w.\fu \cap \fg_2$.  

By Lemma~\ref{lemma.coset-ideals} there is a map $\map: \cosets \to \ideals_2$ defined by
\begin{eqnarray} \label{c_map}
	\map(w) = w.\fu\cap \fg_2 \text{ for } w \in \cosets.
\end{eqnarray} 
In \cite[\S 3.7(b)]{dCLP1988} it is shown that $\map$ is surjective for the case when $\co_x$ is an even orbit, i.e., when $\fg_i=0$
for $i$ odd.  We now extend that result to show $\map$ is surjective for all nilpotent orbits.
\begin{lemma} \label{lem.g2-ideals} 
	Given $U \in \ideals_{2}$, there exists $w \in \cosets$ such that~$U= w.\fu \cap \fg_2$. 
\end{lemma}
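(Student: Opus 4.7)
The plan is to reformulate the claim combinatorially and then extend the argument of \cite[\S3.7(b)]{dCLP1988} to arbitrary gradings. Unpacking via $w \cdot \fg_\alpha = \fg_{w(\alpha)}$, we have $w \cdot \fu \cap \fg_2 = \bigoplus_{\alpha \in \Phi_2 \cap w(\Phi^+)} \fg_\alpha$. Write $\Phi_U \subseteq \Phi_2$ for the roots appearing in $U$; the $B_0$-stability of $U$ translates into $\Phi_U$ being a $\Phi_0^+$-up-set in $\Phi_2$, meaning that whenever $\delta \in \Phi_U$, $\beta \in \Phi_0^+$, and $\delta + \beta \in \Phi_2$, then $\delta + \beta \in \Phi_U$. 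Setting $T := \Phi_2 \setminus \Phi_U$ and $v := w^{-1}$, the claim is equivalent to finding $v \in W$ satisfying $v(\Phi_0^+) \subset \Phi^+$, $v(\Phi_U) \subset \Phi^+$, and $v(T) \subset \Phi^-$. In the even case ($\fg_j = 0$ for odd $j$), this is \cite[\S3.7(b)]{dCLP1988}; the new content is the extension to arbitrary gradings.

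I would argue by induction on $|T|$. The base case $T = \emptyset$ (i.e., $U = \fg_2$) is handled by $v = e$. For the inductive step with $T \neq \emptyset$, choose $\gamma \in T$ that is maximal under the partial order $\alpha \preceq \alpha + \beta$ for $\beta \in \Phi_0^+$ with $\alpha + \beta \in \Phi_2$. The subspace $U' := U \oplus \fg_\gamma$ is $B_0$-stable: for any $\delta \in \Phi_{U'}$ and $\beta \in \Phi_0^+$ with $\delta + \beta \in \Phi_2$, either $\delta \in \Phi_U$ (and $\delta + \beta \in \Phi_U \subset \Phi_{U'}$ by $B_0$-stability of $U$), or $\delta = \gamma$ (and $\gamma + \beta \in \Phi_U \subset \Phi_{U'}$ by maximality of $\gamma$, which forces $\gamma + \beta \notin T$). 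Thus $U' \in \ideals_2$ with $\dim U' = \dim U + 1$, and by the inductive hypothesis there exists $w' \in \cosets$ with $w' \cdot \fu \cap \fg_2 = U'$.

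The crux is to modify $w'$ into some $w \in \cosets$ with $w \cdot \fu \cap \fg_2 = U$; concretely, we need $w^{-1}(\gamma) \in \Phi^-$ while the signs of $w'^{-1}(\alpha)$ are preserved for all $\alpha \in \Phi_U \cup \Phi_0^+$. A naive attempt $w = w' s_\beta$ with $\beta := (w')^{-1}(\gamma) \in \Phi^+$ does flip $\gamma$, but the reflection $s_\beta$ may also flip additional positive roots (as is already visible in type $B_2$), so a more delicate construction via a sequence of simple reflections is needed. The principal obstacle is to verify the simultaneous compatibility of all the sign constraints. The maximality of $\gamma$ is decisive here: it ensures that within any rank-two root subsystem of $\Phi$ generated by $\gamma$ together with a root of $\Phi_U \cup \Phi_0^+$, the constraints can be satisfied together, because no element of $\Phi_0^+ + \{\gamma\}$ can land in $T$. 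This rank-two case analysis is the main technical content, generalizing the argument of \cite[\S3.7(b)]{dCLP1988} with additional bookkeeping to accommodate the odd-graded root spaces.
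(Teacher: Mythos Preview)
Your reformulation and the inductive setup are fine, but the proof stops precisely at the point that matters. You say yourself that the naive $w = w' s_\beta$ can fail and that ``a more delicate construction via a sequence of simple reflections is needed,'' then assert that a rank-two case analysis handles it without carrying it out. In the even case this is the content of \cite[\S3.7(b)]{dCLP1988}, but in the non-even case the rank-two subsystems generated by $\gamma$ and a root of $\Phi_U \cup \Phi_0^+ \cup T'$ can contain roots of odd grade, and you give no indication of why these do not interfere when you try to realise the desired sign pattern by a product of simple reflections. The maximality of $\gamma$ does give you $\gamma + \Phi_0^+ \subset \Phi_U$ whenever the sum is a root, but that alone does not establish global compatibility; you would at least have to run the $B_2$ and $G_2$ cases with odd-graded roots present. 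As it stands this is a genuine gap, not just missing bookkeeping.

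The paper avoids this entirely by a different idea: rather than redoing the dCLP induction, it reduces to the even case. One passes to the reductive subalgebra $\fs = \bigoplus_{i \in 2\Z} \fg_i$, the fixed points of an involution of $G$, with Weyl group $W_\fs \subset W$. Since $U \subset \fg_2 \subset \fs$ and the nilpotent $x$ is even in $\fs$, the already-proved even case applied inside $\fs$ yields $w \in W^0 \cap W_\fs$ with $w(\Phi_\fs^+) \cap \Phi_2 = \Phi_U$. A short root computation then shows that the same $w$, now viewed in $W$, also satisfies $w(\Phi^+) \cap \Phi_2 = \Phi_U$: the point is that for $\beta \in \Phi_2$ one has $w^{-1}(\beta) \in \Phi_\fs$ (because $w \in W_\fs$), so the sign of $w^{-1}(\beta)$ in $\Phi$ is determined by its sign in $\Phi_\fs$. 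This uses dCLP's result as a black box and sidesteps the inductive construction altogether.
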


\begin{proof} Let $U\in \ideals_2$. It suffices to show that there exists $w\in \cosets$ such that $\Phi_U = w(\Phi^+)\cap \Phi_2$.  
	When $\co_x$ is even, \cite[\S 3.7(b)]{dCLP1988} gives a construction of such a $w \in \cosets$.  
	Moreover,
	the $w$ constructed is the unique element achieving the largest possible value for $\ell(w)$ among those 
	$w \in \cosets$ satisfying $\Phi_U = w(\Phi^+)\cap \Phi_2$.  
	
	Suppose now that $\co_x$ is not even.  Consider the Lie subalgebra $$\fs:= \bigoplus_{i\in 2\Z} \fg_i.$$  
	Then $\fs$ is the centralizer in $\fg$ of an element of order $2$ in $G$, namely, the image of 
	$\left(\begin{smallmatrix}
		-1 & 0 \\
		0 & -1
	\end{smallmatrix}\right)$ under the map $SL_2(\C) \to G$ coming from the $\mathfrak{sl}_2$-triple defined in \S\ref{sec.preliminaries}.
	Hence $\fs$ is a reductive subalgebra and its simple roots are a subset of the extended simple roots of $\fg$ (see \cite{Carter1993}).  
	Let $\Phi_{\fs}$ be the root system of $\fs$ with positive roots $\Phi_{\fs}^+ =  \Phi^+ \cap \Phi_{\fs}$.
	Let $W_\fs$ denote the Weyl group of $\fs$ relative to $\ft \subset \fs$.
	
	The $\mathfrak{sl}_2$-triple for $x$ in $\fg$ lies in $\fs$ and so the $W_0$ defined relative to $\fg$ and $\fs$ coincide.  
	Since $U \subset \fg_2 \subset \fs$ and  $\co_x\cap \fs$ is an even orbit in $\fs$, where the lemma holds,
	there exists $w \in  \cosets \cap W_{\fs}$ such that $w(\Phi_{\fs}^+) \cap \Phi_2 = \Phi_U$.

	Now let $\sigma \in W$ be any element satisfying $\sigma^{-1}(\Phi^+_{\fs}) \subset \Phi^+$.   
	Then $\Phi^+_{\fs} \subset \sigma(\Phi^+) \cap \Phi_{\fs}$ and so $\sigma(\Phi^+) \cap \Phi_{\fs} = \Phi^+_{\fs}$.
	We claim that $$w\sigma(\Phi^+)  \cap \Phi_2 = \Phi_U.$$   
	Indeed, let $\beta \in \Phi_2$.
	Then $\beta \in w\sigma(\Phi^+)$ if and only if $w^{-1}(\beta) \in \sigma(\Phi^+)$.   	
	Since $w \in W_{\fs}$ and $\Phi_2\subset \Phi_{\fs}$, we have $w^{-1}(\Phi_2) \subset \Phi_{\fs}$. 
	Hence,  $w^{-1}(\beta) \in \sigma(\Phi^+)$ if and only if 
	$$w^{-1}(\beta) \in \sigma(\Phi^+) \cap \Phi_{\fs} = \Phi^+_{\fs}.$$
	This shows that $\beta\in w\sigma (\Phi^+)$ if and only if $\beta\in w(\Phi_{\fs}^+)$, 
	proving that $w\sigma(\Phi^+)\cap \Phi_2=w(\Phi_\fs^+)\cap \Phi_2$, as desired. 
	Finally, we may take $\sigma$ to be the identity of $W$ to see that $w$ itself satisfies $\Phi_U = w(\Phi^+) \cap \Phi_2$. 
\end{proof}	

The Lemma applies in particular to $U \in \ideals_{2}^{gen}$, completing the proof of Theorem \ref{ThmA}.  We can now write 
\eqref{eqn.gen_Springer.sheaf2} as
\begin{eqnarray}\label{eqn.gen_Springer.sheaf3}
	R\mu^I_*(\sh[N_I]) \simeq \bigoplus_{(\co, \cl) \in \sppairs} IC(\bar{\co}, \cl)\otimes V^I_{\co, \cl}.
\end{eqnarray}

\begin{Rem}
	The proof of Lemma~\ref{lem.g2-ideals} shows that for each $w \in  \cosets \cap W_{\fs}$ such that  $U= \map(w)$, 
	we have $\map(w\sigma)= \map(w)$ where $\sigma$ runs over the right coset representatives of $W_{\fs}$ in $W$ that lie in $\cosets$.   It follows from Corollary \ref{cor:poincare_decomp} for the Springer fiber case where $M = \fu$ that the index $[W:W_\fs]$ divides the Euler characteristic of $\cb_x$ when $\co_x$ is not an even orbit.  	
	
	For example, in  type $E_7$ there are $3$ involutions in $G$, up to conjugacy.  
	Curiously, among all non-even orbits, only the involution for which $\fs$ is of type $D_6 \times A_1$ occurs
	for the $\fs$ in the proof of  Lemma~\ref{lem.g2-ideals}.
	The Weyl group of the standard $D_6 \times A_1$ is exactly the stabilizer of the line through the highest root, showing   
	that $[W\!:\!W_\fs] = 63$, the number of positive roots in $E_7$.  We deduce that the Euler characteristic 
	of any Springer fiber of a non-even nilpotent element in $E_7$ is divisible by $63$,
	which was observed by Fenn.
\end{Rem}

It will be convenient sometimes to use a parametrization indexed by $\Irr(W)$ instead of $\sppairs$.
For each $\varphi \in \Irr(W)$, we can write $\varphi = \varphi_{\co,\cl}$ for a unique 
$(\co, \cl) \in \sppairs$ by the Springer correspondence \eqref{eqn.Springer.correspondence}.
We sometimes write $m^I_\varphi$ or $V^I_\varphi$ in place of
$m^I_{x, \chi}$ or $V^I_{\co_x, \cl_\chi}$, respectively.

Consider the partial order on $\Wr$ by 
$\varphi_{\co',\cl'} \preceq \varphi_{\co,\cl}$ if $\co' \subseteq \bar \co$.
We choose a linear ordering on $\Irr(W)$ that respects this partial order.
Define the matrix $\bf K$ to have entries coming from  
the left side of \eqref{poincare_decomp_W}:
$$K_{\varphi,\varphi'} =  \sum_j \left(\varphi \otimes \chi:  H^{2j}(\cb_x)\right) q^{j}$$
where $\varphi' = \varphi_{\co_x,\cl_\chi}$.  The entries of $\bf K$ lie in $\mathbb N[q]$ and
$\bf K$ is lower triangular, with powers of $q$ along the diagonal. The matrix
$\bf K$ is computable by the Lusztig--Shoji algorithm \cite[\S 24]{Lusztig1986}.
The entries of $\bf K$ are sometimes referred to as the Green functions of $G$.  In type $A$,
they coincide with modified Kostka--Foulkes polynomials~\cite{Garsia-Procesi1992}.

Set $c_I = \dim \fu - \dim I$, the codimension of $I$ in $\fu$.  
Then since $c_I = N - N_I$, we can rewrite Equation~\eqref{poincare_decomp} as
\begin{equation}\label{poincare_decomp2} 
	q^{2c_I}\,\mathsf P(\hess{x}{I}; \chi)(q^2)  =   \sum_{\varphi \in \Irr(W)} q^{c_I}  m^I_\varphi(q)  {\bf K}_{\varphi, \varphi'}(q^2).
\end{equation}
where $\varphi' = \varphi_{\co_x,\cl_\chi}$.
Since the other expressions in \eqref{poincare_decomp2} involve even exponents, 
it follows that the exponents of $q^{c_I}  m^I_\varphi(q)$ are also even, allowing us to make the following definition.

\begin{definition}\label{key_polys}
For $\varphi \in \Irr(W)$ and $I \in \ideals$, define $f^I_\varphi(q)$ by $f^I_{\varphi}(q^2) = q^{c_I } m^I_{\varphi}(q)$.
\end{definition}

\begin{prop}\label{prop:key_polys}
	We have $f^I_{\varphi} \in  \mathbb N[q]$ and its coefficients are symmetric about $q^{c_I/2}$.
\end{prop}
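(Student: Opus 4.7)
The plan is to verify three properties of $f^I_\varphi$: non-negativity of coefficients, non-negativity of exponents, and symmetry about $q^{c_I/2}$. Of these, symmetry follows directly from the self-duality $m^I_\varphi(q^{-1}) = m^I_\varphi(q)$ recorded above Definition~\ref{key_polys} (coming from Verdier self-duality of the complex $R\mu^I_*\sh[N_I]$): substituting $u = q^2$ in the defining relation $f^I_\varphi(q^2) = q^{c_I} m^I_\varphi(q)$ yields $u^{c_I} f^I_\varphi(u^{-1}) = f^I_\varphi(u)$, which is precisely symmetry of the coefficients about $q^{c_I/2}$. Non-negativity of coefficients is also immediate: the coefficients of $m^I_\varphi$ are the graded dimensions $\dim V^I_{\co,\cl,j}$, hence non-negative, and this is preserved by the substitution defining $f^I_\varphi$.

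The substantial content is non-negativity of exponents of $f^I_\varphi$, equivalently (by the symmetry just proved) the upper bound $\deg f^I_\varphi \leq c_I$, i.e., $\deg m^I_\varphi \leq c_I$. Geometrically this bounds the shifts $[-k]$ of IC summands in the BBD decomposition
\[ R\mu^I_*\sh[N_I] \cong \bigoplus_{(\co,\cl),\, k} IC(\bar\co,\cl)[-k] \otimes V^I_{(\co,\cl),k}\]
by $|k| \leq c_I$. My plan is to establish this via the factoring $\mu^I = \mu \circ j$, where $j\colon G\times^B I \hookrightarrow G\times^B \fu$ is the closed embedding of codimension $c_I = \dim\fu - \dim I$ and $\mu$ is the (semismall) Springer resolution. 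Since $G\times^B I$ is smooth of dimension $N_I$, the pushforward $j_*\sh_{G\times^B I}[N_I]$ is a perverse sheaf on $G\times^B\fu$, namely the IC of a smooth closed subvariety of codimension $c_I$. Applying $R\mu_*$ recovers $R\mu^I_*\sh[N_I]$, and BBD bounds on perverse amplitude of proper pushforward, combined with the codimension-$c_I$ support of this source perverse sheaf and the semismallness of $\mu$, should then yield the shift bound $|k| \leq c_I$.

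The main technical obstacle is making this amplitude bound sharp. For a proper map with fibers of dimension $\leq d$, $R\mu_*$ shifts the perverse $t$-structure by at most $d$, which for $\mu$ gives the too-weak bound $d = \dim\fu$ (achieved at the zero fiber). The improved bound $c_I$ must exploit that the source perverse sheaf lives in codimension $c_I$ and that $\mu$ restricted to this subvariety (essentially $\mu^I$) has fiber dimensions controlled against the orbit strata of $\overline{\co_I}$. The extremal cases confirm sharpness: for $I = \fu$ one has $c_I = 0$ and recovers Springer semismallness, while for $I = \{0\}$ one has $c_I = \dim\fu$ and $\hess{0}{I} = G/B$ exactly saturates the bound. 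An alternative route avoids BBD in favor of descending induction on the partial order on $\Wr$ using \eqref{poincare_decomp2} together with the triangular form of $\mathbf K$ (with pure powers of $q$ on the diagonal, $\mathbf K_{\varphi,\varphi}(q) = q^{\dim\cb_x}$); in this variant the technical hurdle is controlling degrees through the subtractions at each inductive step, where the parity and symmetry properties of $m^I_\varphi$ are key.
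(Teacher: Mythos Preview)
Your proposal is incomplete: you outline two routes and explicitly flag an unresolved ``technical obstacle'' in each. The paper's proof is a much simpler variant of your Approach~2, and the key insight you are missing is that \emph{no induction or subtraction is needed at all}. In \eqref{poincare_decomp2}, every term $q^{c_I} m^I_\varphi(q)\,\mathbf K_{\varphi,\varphi'}(q^2)$ on the right has nonnegative coefficients (both factors lie in $\mathbb N[q,q^{-1}]$), so the degree of each individual summand is bounded by the degree of the left side, which is $2c_I + 2\dim\hess{x}{I}$. Taking the diagonal summand $\varphi=\varphi'$, where $\mathbf K_{\varphi',\varphi'}(q^2)=q^{2\dim\cb_x}$, gives
\[
2\deg f^I_{\varphi'} + 2\dim\cb_x \le 2c_I + 2\dim\hess{x}{I},
\]
and since $\hess{x}{I}\subset\cb_x$ forces $\dim\hess{x}{I}\le\dim\cb_x$, you conclude $\deg f^I_{\varphi'}\le c_I$ immediately. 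This is the whole argument; your concern about ``controlling degrees through the subtractions'' does not arise because positivity lets you skip the subtractions entirely.

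Your Approach~1 can in fact be completed, and it is worth noting that it hinges on the \emph{same} inequality. The semismallness defect of $\mu^I$ over the stratum $\co_x$ is
\[
2\dim\hess{x}{I} + \dim\co_x - N_I \le 2\dim\cb_x + \dim\co_x - N_I = N - N_I = c_I,
\]
using Steinberg's formula $2\dim\cb_x = N - \dim\co_x$ and again $\dim\hess{x}{I}\le\dim\cb_x$. So the defect is at most $c_I$ (with equality at $x=0$, as you observed), and the de~Cataldo--Migliorini bound then gives $|k|\le c_I$ directly. The ``technical obstacle'' you flagged is thus resolved by exactly the inclusion $\hess{x}{I}\subset\cb_x$ that drives the paper's elementary argument; the paper simply extracts this inequality without invoking the defect machinery.
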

\begin{proof}
	The highest power of $q$ on the left in \eqref{poincare_decomp2} is $2c_I + 2\dim \hess{x}{I}$, 
	each term in the sum is bounded by this value.  In particular this holds for $\varphi' = \varphi$.
	In that case, ${\bf K}_{\varphi', \varphi'}(q^2) = q^{2\dim (\cb_x)}$ and so
	$$2 \deg(f^I_{\varphi'}) + 2\dim (\cb_x) \leq 2c_I + 2\dim \hess{x}{I}$$
    Since $\hess{x}{I} \subset \cb_x$ and so $\dim \hess{x}{I} \leq \dim \cb_x$,
    we get $\deg(f^I_{\varphi'}) \leq c_I$.   Hence, 
    the exponents of $m^I_{\varphi'}$ are bounded by $c_I$ 
    and therefore below by  $-c_I$ by the $q \to q^{-1}$ symmetry of $m^I_{\varphi'}$.
	Hence, $f^I_{\varphi'} \in  \mathbb N[q]$ and its coefficients are symmetric about $q^{c_I/2}$.	
\end{proof}

The polynomials $f^I_\varphi$, or their transformations by a fixed matrix independent of $I$, 
make several appearances in the rest of this paper.

The results in this section where obtained in \cite{Borho-MacPherson1983}
in the parabolic setting, i.e., for $I = \fu_P$ where $P$ is any parabolic subgroup of $G$. 
Borho and MacPherson also did more, 
giving a formula for $f^{\fu_P}_\varphi$.  Namely,
\begin{equation}\label{parabolic_case}
	f^{\fu_P}_\varphi = \mathsf{P}(P/B) (\varphi: \Ind_{W_P}^W(\mbox{sgn})).  
\end{equation}
where $\mbox{sgn}\in \Irr(W)$ denotes the sign representation of $W$ and $W_P$ denotes the Weyl group of $P$.

\begin{Rem}
	Although less efficient than the Lusztig--Shoji algorithm, our methods give an inductive way to compute $\bf K$
	while also computing $m^I_\varphi$, or equivalently $f^I_\varphi$.  
	The induction starts at the zero orbit and moves up in the partial order on $\sppairs$.  
	The procedure relies on the dimension constraints for the IC-sheaves and the symmetry of the coefficients of $m^I_\varphi$.
	We need to know $H^*(\hess{x}{I})$ for all $x$ and enough $I$ and then we can use \eqref{poincare_decomp2}.  	
	This inductive process is analogous to the usual method for computing 
	the Kazhdan--Lusztig polynomials, i.e., the IC stalks of Schubert varieties, using the Bott--Samelson resolutions.
\end{Rem}

\section{Generalized Grothendieck--Springer setting}\label{sec.gen_grothendieck}

In this section we prove a conjecture of Brosnan and 
show that the polynomials $f^I_\varphi$ from Definition \ref{key_polys} control the decomposition of 
the pushforward of the shifted constant sheaf when 
we move from the setting of  $I \in \ideals$ to that of $H \in \mathcal{H}$.

\subsection{Brosnan's conjecture}
Let $H\in \mathcal{H}$ and consider the map 
\begin{eqnarray}\label{eqn.muH}
	\mu^H: G\times^B H \to \fg, (g,x) \mapsto g. x.
\end{eqnarray}
This map is proper and since $\fb\subseteq H$, the image of $\mu_H$ is $\fg$.  
Let $N_H = \dim G/B + \dim H$, the dimension of the smooth variety $G\times^B H$. 

When  $H=\fb$, this map is the Grothendieck--Springer resolution, which Lusztig \cite{Lusztig1981} showed was a small map.  In particular, $N_b = \dim \fg$.
Since the map is small, $R\mu^\fb_* (\sh[N_\fb])$ decomposes into a sum of irreducible perverse sheaves on $\fg$ with maximal support.  More precisely
\begin{eqnarray}\label{eqn.decomp1}
	R\mu^\fb_* (\sh[N_\fb]) = \bigoplus_{\varphi \in \Irr(W)} IC(\fg , \cm_\varphi )\otimes \varphi
\end{eqnarray}
where $\cm_\varphi$ is the irreducible local system supported on the regular semisimple elements $\fg_{rs}$ of $\fg$ 
corresponding to $\varphi \in \Irr(W)$ \cite[Lemma 8.2.5]{Achar-book}.

We wish to generalize Equation \eqref{eqn.decomp1} to any $H \in \mathcal{H}$ and to compute the sheaf $R\mu^H_* (\sh_{H}[N_H])$, as we did for the case of $I \in \ideals$. 
Recall in that situation, as a consequence of Theorem \ref{ThmA} that \eqref{eqn.gen_Springer.sheaf2}
becomes
\begin{eqnarray}\label{decomp_springerized}
	R\mu^I_*(\sh[N_I]) \simeq \bigoplus_{(\co, \cl) \in \sppairs} IC(\bar{\co}, \cl)\otimes V^I_{\co, \cl},
\end{eqnarray}
where $V^I_{\co, \cl}$ is a $\Z$-graded vector space. 
For $\varphi \in \Irr(W)$,
we will write  $V^I_\varphi$ for  $V^I_{\co, \cl}$ where $\varphi = \varphi_{\co, \cl}$ under the Springer correspondence.

Our result is the following theorem, originally conjectured by Brosnan (see \cite[Conjecture 5.2.2]{Xue2020} and  \cite{Vilonen-Xue2021}). 
\begin{thm} \label{thm.sheaves.ss} Let $H\in \mathcal{H}$. 
	Let $I =H^\perp$, the annihilator of $H$ under the Killing form. 
	There is an isomorphism 
	\begin{eqnarray} \label{H_decomp}
	R\mu^H_* (\sh_{H}[N_H]) \simeq \bigoplus_{\varphi \in {\mbox{Irr}}(W)} IC(\fg, \cm_{\varphi}) \otimes V_{\varphi \otimes \mbox{sgn}}^I
	\end{eqnarray}
	in the derived category of $G$-equivariant perverse sheaves on $\fg$. 
	 In particular, every simple summand of $R\mu^H_* (\sh_{H}[N_H])$ is a simple perverse sheaf on $\fg$ with full support.
\end{thm}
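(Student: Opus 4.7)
The plan is to deduce the theorem from Theorem \ref{ThmA} by applying the Fourier--Deligne transform $\mathcal{F}$ on $\fg$ (using the Killing form to identify $\fg$ with its dual) to the decomposition
\begin{eqnarray*}
R\mu^I_*(\sh[N_I]) \simeq \bigoplus_{(\co, \cl) \in \sppairs} IC(\bar{\co}, \cl)\otimes V^I_{\co, \cl}.
\end{eqnarray*}
Since $\mathcal{F}$ is an auto-equivalence of the derived category of $G$-equivariant perverse sheaves on $\fg$, once we identify the Fourier transform of each side separately, the isomorphism \eqref{H_decomp} will follow immediately and every simple summand will have full support because each $\cm_\varphi$ is a local system on the open dense set $\fg_{rs}$.

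First I would identify the Fourier transform of the left-hand side. The map $\mu^I$ factors through the closed embedding $G\times^B I \hookrightarrow \cb \times \fg$ as a vector sub-bundle with fiber $g.I$ over $gB$, followed by projection onto $\fg$. Because the Killing form is $G$-invariant, the orthogonal sub-bundle in $\cb \times \fg$ has fiber $(g.I)^\perp = g.H$ over $gB$ and is canonically identified with $G\times^B H$. The fiberwise Fourier transform along $\fg$ interchanges shifted constant sheaves on orthogonal linear subspaces, and it commutes with pushforward along the projection to $\cb$; combined with the standard compatibility of $\mathcal{F}$ with the pushforward to $\fg$, and with the correct normalization of shifts, this yields
\begin{eqnarray*}
\mathcal{F}\bigl(R\mu^I_*(\sh[N_I])\bigr) \simeq R\mu^H_*(\sh_{H}[N_H]).
\end{eqnarray*}

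Next, I would compute the Fourier transform of each IC summand. The key input is Lusztig's result that the Springer IC sheaves are exchanged with the Grothendieck local systems under $\mathcal{F}$, namely
\begin{eqnarray*}
\mathcal{F}\bigl(IC(\bar{\co}, \cl)\bigr) \simeq IC(\fg, \cm_{\varphi_{\co,\cl}\otimes \mathrm{sgn}}),
\end{eqnarray*}
for $(\co,\cl)\in\sppairs$. The sign twist is consistent with the baseline check: under the paper's Springer conventions, the zero orbit with trivial local system corresponds to $\mathrm{sgn}$, and its IC sheaf $\sh_{\{0\}}$ has Fourier transform (up to shift) the constant sheaf on $\fg$, i.e., $IC(\fg, \cm_{\mathrm{triv}})$, consistent with $\mathrm{sgn}\otimes \mathrm{sgn} = \mathrm{triv}$. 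Reindexing the resulting sum by $\varphi \mapsto \varphi\otimes \mathrm{sgn}$, which is a bijection on $\Irr(W)$, then gives exactly \eqref{H_decomp}.

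The main obstacle is carefully tracking the shifts, Tate twists, and sign conventions across the three ingredients: the Fourier transform of a pushforward from an orthogonal sub-bundle, the Fourier transform of Springer-type IC sheaves, and the sign convention for the Springer correspondence used here. The sign representation enters in two places, so one must verify that the composition produces a single sign twist rather than none or two; the zero-orbit and regular-orbit computations pin this down. A secondary point worth noting is that $\mu^H$ is generally neither small nor semismall, but this does not enter the argument since the IC-structure on the right-hand side of \eqref{H_decomp} is inherited from the summands of $R\mu^I_*(\sh[N_I])$ via $\mathcal{F}$, rather than from properties of $\mu^H$ itself.
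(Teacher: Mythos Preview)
Your proposal is correct and follows essentially the same route as the paper: apply the Fourier transform to the decomposition from Theorem~\ref{ThmA}, using that $\mathcal{F}$ exchanges $R\mu^I_*(\sh[N_I])$ with $R\mu^H_*(\sh_H[N_H])$ via the orthogonal sub-bundle description, and that $\mathcal{F}(IC(\bar{\co},\cl)) \simeq IC(\fg, \cm_{\varphi_{\co,\cl}\otimes\mathrm{sgn}})$ for $(\co,\cl)\in\sppairs$. The paper simply cites these two facts from \cite{Achar-book} (Corollary~6.9.14 and equation~(8.3.2)) rather than sketching them, but the argument is the same.
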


\begin{proof}  To prove the result, we apply the Fourier transform (see \cite[Cor.~6.9.14]{Achar-book}) to obtain 
	\begin{eqnarray}\label{eqn.Fourier}
		\mathfrak{F}(R\mu^I_* \sh_I[N_I]) =  R\mu^H_* (\sh_{H}[N_H]),
	\end{eqnarray}
	where $\mathfrak{F}$ maps each simple summand of 
	$R\mu^I_* (\sh_I[N_I])$  in \eqref{decomp_springerized}
	to a simple summand of $R\mu^H_* (\sh_{H}[N_H])$.
	Next, for $(\co, \cl) \in \sppairs$, we have
	\[
	\mathfrak{F}(IC(\bar \co, \cl)) = IC(\fg, \cm_{\varphi \otimes \mbox{sgn}}) \textup{ where }  \varphi = \varphi_{\co, \cl}
	\]
	(see Sections 8.2, 8.3, and specifically equation (8.3.2) of \cite{Achar-book}).  The result follows.
\end{proof}
Theorem \ref{thm.sheaves.ss} generalizes \cite[Theorem 3.6]{Balibanu-Crooks2020} to all Lie types.

\subsection{Monodromy action} \label{monodromy}
Let $s \in \ft$ be a regular semisimple element.   The variety $\hess{s}{H}$ defined as in~\eqref{definition.varieties} is called a regular semisimple Hessenberg variety.
There is an action of $W$ on $H^*(\hess{s}{H})$ arising from monodromy \cite{Brosnan-Chow2018}, \cite{Balibanu-Crooks2020}.  
Then taking stalks in \eqref{H_decomp} at $s$ and using $W$-equivariant proper base change gives
\begin{equation}
H^{*+N_H-\dim \fg}(\hess{s}{H}) \simeq  \bigoplus_{\varphi \in \Irr(W)} \varphi \otimes V_{\varphi \otimes \mbox{sgn}}^{H^\perp}
\end{equation}
as $W$-modules.  Here, $\varphi$ is in degree $0$ on the right and carries the $W$-action.  We have used that
$\ch^j_sIC(\fg, \cm_{\varphi})$  is zero, except for $j = -\dim \fg$, where it equals $\varphi$.

Now $N_H -\dim \fg = \dim H - \dim \fb$, which is the dimension of $\hess{s}{H}$. 
We deduce that for $\varphi \in \Irr(W)$ that
\begin{equation}
\mathsf P(\hess{s}{H}; \varphi)(q^2) = q^{\dim H - \dim \fb}  m^{H^\perp}_{\varphi \otimes \mbox{sgn}}(q).
\end{equation}
Since  $\dim H - \dim \fb = \dim \fu - \dim H^{\perp} = c_{H^{\perp}}$, we immediately have
\begin{prop} \label{ss_hessy}
	For $H \in \mathcal H$ and $\varphi \in \Irr(W)$, we have $\mathsf P(\hess{s}{H}; \varphi) = f^{H^\perp}_{\varphi \otimes \mbox{sgn}}.$
\end{prop}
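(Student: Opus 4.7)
The plan is to derive the identity by taking stalks of the decomposition in Theorem~\ref{thm.sheaves.ss} at the regular semisimple element $s$ and carefully tracking the $W$-action and the degree shifts. First I would apply Theorem~\ref{thm.sheaves.ss} to $H$ to obtain
$$R\mu^H_*(\sh_H[N_H]) \simeq \bigoplus_{\varphi \in \Irr(W)} IC(\fg,\cm_\varphi)\otimes V^{H^\perp}_{\varphi \otimes \mbox{sgn}}.$$
Since $s$ lies in the open stratum $\fg_{rs}$ on which each $\cm_\varphi$ is a local system, the stalk $\ch^j_s IC(\fg,\cm_\varphi)$ vanishes for $j \neq -\dim\fg$ and equals $\varphi$ (as a $W$-module) in degree $-\dim\fg$; this is the same input that appears in the derivation of \eqref{eqn.decomp1}.

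Next I would invoke proper base change for the map $\mu^H$, which identifies the stalk at $s$ of $R\mu^H_*(\sh_H[N_H])$ with $H^{*+N_H}(\hess{s}{H})$. The key point here is that this identification is $W$-equivariant when the left-hand side is equipped with the monodromy $W$-action of Brosnan--Chow and the right-hand side carries the natural $W$-action coming from the Galois-covering realization of the $\cm_\varphi$; this compatibility is precisely the content of \cite{Brosnan-Chow2018,Balibanu-Crooks2020}. Combining the stalk computation with proper base change gives
$$H^{*+N_H-\dim\fg}(\hess{s}{H}) \simeq \bigoplus_{\varphi \in \Irr(W)} \varphi \otimes V^{H^\perp}_{\varphi\otimes\mbox{sgn}}$$
as $W$-modules, where the left-hand side's shift simplifies since $N_H-\dim\fg = \dim H - \dim\fb = \dim\hess{s}{H}$.

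Extracting the $\varphi$-isotypic component and passing to Poincar\'e polynomials in $q$ yields
$$\mathsf P(\hess{s}{H};\varphi)(q^2) = q^{\dim H - \dim\fb}\, m^{H^\perp}_{\varphi\otimes\mbox{sgn}}(q).$$
Finally, using the Killing-form identity $\dim H - \dim\fb = \dim\fu - \dim H^\perp = c_{H^\perp}$ together with Definition~\ref{key_polys}, namely $f^I_{\varphi}(q^2) = q^{c_I}m^I_\varphi(q)$ with $I = H^\perp$, the right-hand side is exactly $f^{H^\perp}_{\varphi\otimes\mbox{sgn}}(q^2)$, which gives the claimed equality.

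Overall this is a bookkeeping argument; the only nontrivial input is the $W$-equivariance of proper base change, which is where the monodromy action must be correctly matched with the Galois-cover action on $\cm_\varphi$. Everything else — the single-degree concentration of the IC stalks on the regular semisimple locus, the dimension identity $c_{H^\perp} = \dim H - \dim\fb$, and the rescaling relating $m^I_\varphi$ to $f^I_\varphi$ — is essentially formal once Theorem~\ref{thm.sheaves.ss} is in hand.
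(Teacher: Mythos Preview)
Your proof is correct and follows essentially the same approach as the paper: the paper's argument is exactly the stalk computation you describe, carried out in the discussion of \S\ref{monodromy} immediately preceding the proposition, using the same ingredients (Theorem~\ref{thm.sheaves.ss}, $W$-equivariant proper base change via \cite{Brosnan-Chow2018,Balibanu-Crooks2020}, the single-degree concentration of the IC stalk at $s$, and the dimension identity $c_{H^\perp}=\dim H-\dim\fb$).
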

Note that Proposition \ref{ss_hessy} gives another proof that $f^{I}_{\varphi}$ is a polynomial,
while Proposition \ref{prop:key_polys} gives a (new) proof that 
$\mathsf P(\hess{s}{H}; \varphi)$ is palindromic.

\subsection{Nilpotent Hessenberg varieties}

We can also take the stalks in  \eqref{H_decomp}  at elements of $\cn$ and get an interesting result.
First, we need the fact that 
$$IC(\fg, \cm_{\varphi})|_\cn [-\dim \ft]\simeq IC(\bar \co, \cl)$$  
where $\varphi = \varphi_{\co, \cl} $ (for example, see \cite[Lemma 8.3.5]{Achar-book}).  
Next, $N_H - N = \dim H -\dim \fu$. But with the extra shift by $\dim \ft$ from above,  
we get the analogue of \eqref{poincare_decomp2}.

\begin{prop} \label{prop:nilpo_hessy}
 For $x \in \cn$ and $\chi \in \Ar$, we have
\begin{equation}\label{nilp_hessy_decomp}
	\mathsf P(\hess{x}{H}; \chi) =  \sum_{\varphi \in \Irr(W)}   f^{H^\perp}_{\varphi \otimes \mbox{sgn}} {\bf K}_{\varphi, \varphi'}
\end{equation}
where $\varphi' = \varphi_{\co_x,\cl_\chi}$.
\end{prop}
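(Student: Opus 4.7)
The plan is to derive formula~\eqref{nilp_hessy_decomp} by taking stalks at $x\in \cn$ in the decomposition from Theorem~\ref{thm.sheaves.ss} and tracking shifts and $A(x)$-actions carefully. By proper base change applied to the proper map $\mu^H$, the stalk of $R\mu^H_*(\sh_H[N_H])$ at $x$ is $H^{*+N_H}(\hess{x}{H})$. Combining with \eqref{H_decomp}, for each integer $i$ one obtains an $A(x)$-equivariant isomorphism
\[
H^i(\hess{x}{H})\;\simeq\;\bigoplus_{\varphi\in \Irr(W)} \ch^{i-N_H}_x\bigl(IC(\fg,\cm_\varphi)\otimes V^{H^\perp}_{\varphi\otimes\mathrm{sgn}}\bigr).
\]
Next, the isomorphism $IC(\fg,\cm_\varphi)|_\cn[-\dim\ft]\simeq IC(\bar\co,\cl)$ (for $\varphi=\varphi_{\co,\cl}$) recalled just before the proposition rewrites each stalk of $IC(\fg,\cm_\varphi)$ at $x$ as a stalk of $IC(\bar\co,\cl)$ shifted by $\dim\ft$, and the tensor product with the graded multiplicity space splits each summand according to the graded pieces $(V^{H^\perp}_{\varphi\otimes\mathrm{sgn}})_k$.

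Second, I will extract the $\chi$-isotypic component for the $A(x)$-action and sum into Poincar\'e polynomials. Corollary~\ref{odd_vanishing} applies to any $B$-stable subspace of $\fg$, hence in particular to $H\in \mathcal H$, so $H^*(\hess{x}{H})$ is concentrated in even degrees and only even indices $i=2j$ contribute. Organizing the resulting sum by the series $c^{u,\phi}_{x,\chi}(q)$ controlling the $\chi$-multiplicities in the IC stalks, by the series $m^{H^\perp}_{\varphi\otimes\mathrm{sgn}}(q)$ recording the graded dimensions of the multiplicity space, and by a single overall power of $q$ coming from the shift $\dim\ft-N_H$, one obtains an identity of the shape
\[
\mathsf P(\hess{x}{H};\chi)(q^2)\;=\;q^{N_H-\dim\ft}\sum_{\varphi\in \Irr(W)} c^{u,\phi}_{x,\chi}(q)\,m^{H^\perp}_{\varphi\otimes\mathrm{sgn}}(q),
\]
where $\varphi=\varphi_{u,\phi}$ and $\varphi'=\varphi_{\co_x,\cl_\chi}$ throughout.

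To finish, I will invoke two identifications that are already available in the excerpt: the formula $q^N c^{u,\phi}_{x,\chi}(q)={\bf K}_{\varphi,\varphi'}(q^2)$ extracted from \eqref{poincare_decomp_W}, and Definition~\ref{key_polys}, which reads $q^{c_{H^\perp}}m^{H^\perp}_{\varphi\otimes\mathrm{sgn}}(q)=f^{H^\perp}_{\varphi\otimes\mathrm{sgn}}(q^2)$. Substituting these, the claim \eqref{nilp_hessy_decomp} reduces to the elementary shift identity $N_H-\dim\ft = N + c_{H^\perp}$, which in turn follows from $N_H=\dim G/B+\dim H$, $N=2\dim\fu$, and $c_{H^\perp}=\dim H-\dim\fb$ (the last being immediate from $\dim H+\dim H^\perp=\dim\fg$).

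The argument is essentially bookkeeping once Theorem~\ref{thm.sheaves.ss}, Corollary~\ref{odd_vanishing}, and the IC-restriction isomorphism are invoked; the only delicate point is the careful tracking of the three shifts $N_H$, $\dim\ft$, and $c_{H^\perp}$ and verifying that the $A(x)$-action passes through proper base change, the IC restriction to $\cn$, and the tensor decomposition. I expect no substantial obstacles beyond this.
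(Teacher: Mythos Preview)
Your proposal is correct and follows exactly the approach the paper itself takes: take stalks at $x\in\cn$ in the decomposition \eqref{H_decomp} from Theorem~\ref{thm.sheaves.ss}, invoke the restriction isomorphism $IC(\fg,\cm_\varphi)|_\cn[-\dim\ft]\simeq IC(\bar\co,\cl)$, and then track the shifts to obtain the analogue of \eqref{poincare_decomp2}. The paper is terser about the bookkeeping (it simply notes $N_H-N=\dim H-\dim\fu$ and the extra shift by $\dim\ft$), while you spell out the identity $N_H-\dim\ft=N+c_{H^\perp}$ and the substitutions for $c^{u,\phi}_{x,\chi}$ and $m^{H^\perp}_{\varphi\otimes\mathrm{sgn}}$ explicitly, but the content is the same.
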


Proposition \ref{prop:nilpo_hessy} was known in the parabolic case.  In  \cite{Borho-MacPherson1983}, 
Borho and MacPherson consider the restriction $\mu^H_\cn$ of $\mu^H$ to 
$X_\cn = G \times^B (H \cap \cn)$.
Since the intersection $\fp \cap \cn$ is rationally smooth for $H = \fp$, 
they obtained the stronger statement 
\begin{equation} \label{H_decomp2}
	R(\mu_\cn^\fp)_*(\sh[\dim X_\cn])  \simeq 
	\bigoplus_{(\co, \cl) \in \sppairs} IC(\bar{\co}, \cl)\otimes V^{\fu_P}_{\varphi_{\co, \cl} \otimes \small \mbox{sgn}}.
\end{equation}
We suspect that \eqref{H_decomp2} holds for all $H \in \mathcal H$, 
presumably because $X_\cn$ is rationally smooth for all $H \in \mathcal{H}.$
In type $A$, Proposition \ref{prop:nilpo_hessy}, or perhaps the stronger version in 
\eqref{H_decomp2}, is an unpublished theorem of Tymoczko and MacPherson \cite[pg.~2882]{Oberwolfach}.

Finally,  we note that when $x$ is regular nilpotent, 
the only term in \eqref{nilp_hessy_decomp} for which  ${\bf K}_{\varphi, \varphi'}$
is nonzero occurs 
when $\varphi$ is the trivial representation, in which case the value is $1$.
Hence $f^{H^\perp}_{\mbox{sgn}} = 	\mathsf P(\hess{x}{H})$.
When $x$ is regular nilpotent, $\mathsf P(\hess{x}{H})$ has a formula as a product of $q$-numbers
depending only on the roots $\Phi_H \cap \Phi^-$ \cite{AHMMS2020}, \cite{Sommers-Tymoczko2006}.

\subsection{The dot action and LLT representations} 
%Let $s \in \fg$ be regular semisimple and $H \in \ch$.   
In \cite{Tymoczko2008}, Tymoczko defined a Weyl group representation on the ordinary cohomology of regular semisimple Hessenberg varieties, called the dot action.
The representation of $W$  on $H^*(\hess{s}{H})$
from \S \ref{monodromy} was shown to coincide with Tymoczko's dot action representation by Brosnan and Chow in~\cite{Brosnan-Chow2018} in type A, and their proof was adapted to all Lie types by B\u{a}libanu and Crooks in~\cite{Balibanu-Crooks2020}.

The dot action  is closely related to another $W$-representation via a tensor product formula,
which first arose in Procesi's study of the toric variety associated to the Weyl chambers from~\cite{Procesi1990} and 
was later treated by Guay--Paquet in type $A$  \cite{Guay-Paquet2016}.   
We summarize the key properties we need here and include the details in Appendix~\ref{appendix}.

Let $\cc$ denote the coinvariant algebra of $W$, with the reflection representation in degree $1$.  
Then $\cc \simeq H^{*}(G/B)$ as graded $W$-representations, up to the doubling of degrees.
%and $\star$ denote the product in the character ring $\mathcal{R}_W$ of $W$ (which corresponds to taking tensor products of representations). 
If $g(q) = \sum a_i q^i$ is a polynomial, we regard it as a graded representation consisting 
of $a_i$ copies of the trivial representation in
degree $a_i$. 
The following result is a direct generalization of \cite[Theorem 2]{Procesi1990} and \cite[Lemma 168]{Guay-Paquet2016}.

\begin{prop}\label{prop.LLT}
	For each Hessenberg space $H\in \ch$ there exists a unique graded $W$-representation $\LLT_H$
	 satisfying
	\begin{eqnarray}\label{eqn.LLT-dot-formula}
		\mathsf{P}(G/B) \otimes \LLT_H \simeq \cc \otimes H^*(\hess{s}{H}).
	\end{eqnarray}
Furthermore, $\LLT_H$ is nonzero only in nonnegative degrees.
\end{prop}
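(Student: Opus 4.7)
Uniqueness is straightforward: taking graded $W$-characters, the isomorphism~\eqref{eqn.LLT-dot-formula} becomes an equation in the polynomial ring $R(W)[q]$, where $R(W)$ is the representation ring of $W$ (a free abelian group on $\Irr(W)$). Since $\mathsf{P}(G/B) \in \mathbb{N}[q]$ is a nonzero polynomial and $R(W)[q]$ is a polynomial ring over a torsion-free base, cancellation determines the character of $\LLT_H$ uniquely, and hence $\LLT_H$ itself up to isomorphism as a graded $W$-representation.

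For existence, the plan is to follow the strategy of Procesi's argument~\cite{Procesi1990} for $\hess{s}{\fb}$ and its extension by Guay-Paquet~\cite{Guay-Paquet2016}, working in $T$-equivariant cohomology. Since $s\in\ft$ commutes with $T$, the torus acts on $\hess{s}{H}$ by left multiplication, and the fixed-point set $(\hess{s}{H})^T$ coincides with $(G/B)^T \cong W$. Combined with the affine paving of $\hess{s}{H}$ (coming from Proposition~\ref{prop.P-orbits} applied at $x=0$, or from a generic one-parameter subgroup of $T$), this yields $T$-equivariant formality, giving an isomorphism
\[
H^*_T(\hess{s}{H}) \simeq H^*(\hess{s}{H}) \otimes_{\C} \mathrm{Sym}(\ft^*)
\]
of graded $\mathrm{Sym}(\ft^*)$-modules, where $\mathrm{Sym}(\ft^*) = H^*_T(\mathrm{pt})$. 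By Brosnan--Chow~\cite{Brosnan-Chow2018} and B\u{a}libanu--Crooks~\cite{Balibanu-Crooks2020}, the dot action lifts to a $W$-action on $H^*_T(\hess{s}{H})$ compatible with the natural $W$-action on $\mathrm{Sym}(\ft^*)$, making $H^*_T(\hess{s}{H})$ a graded $\mathrm{Sym}(\ft^*) \rtimes W$-module.

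The candidate $\LLT_H$ arises from two descriptions of the quotient $H^*_T(\hess{s}{H}) / \mathrm{Sym}(\ft^*)^W_+ \cdot H^*_T(\hess{s}{H})$. On one hand, using equivariant formality together with the Chevalley identification $\mathrm{Sym}(\ft^*) / \mathrm{Sym}(\ft^*)^W_+ \cdot \mathrm{Sym}(\ft^*) \simeq \cc$ of graded $W$-modules, this quotient is identified with $\cc \otimes H^*(\hess{s}{H})$, of graded dimension $\mathsf{P}(G/B) \cdot \mathsf{P}(\hess{s}{H})$. On the other hand, Chevalley's theorem $\mathrm{Sym}(\ft^*) \simeq \cc \otimes_{\C} \mathrm{Sym}(\ft^*)^W$ makes $H^*_T(\hess{s}{H})$ a free graded $\mathrm{Sym}(\ft^*)^W$-module, and the aim is to choose a $W$-equivariant basis whose span has the form $\mathsf{P}(G/B) \otimes \LLT_H$ for some graded $W$-module $\LLT_H$ of graded dimension $\mathsf{P}(\hess{s}{H})$. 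Matching the two expressions yields the desired identity~\eqref{eqn.LLT-dot-formula}.

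The main obstacle is that the equivariant-formality isomorphism displayed above is $\mathrm{Sym}(\ft^*)$-linear but not, in general, $W$-equivariant (the $W$-action on the left combines the dot action with the natural action on $\mathrm{Sym}(\ft^*)$). To carry out the free-module decomposition while tracking the correct $W$-equivariant structure requires careful character bookkeeping, most cleanly done via the GKM description of $H^*_T(\hess{s}{H})$ as a submodule of $\mathrm{Maps}(W, \mathrm{Sym}(\ft^*))$ on which Tymoczko's dot action acts by left translation on the $W$-argument. Non-negativity of the grading of $\LLT_H$ then follows from the non-negativity of gradings on $\cc$, $H^*(\hess{s}{H})$, and $\mathsf{P}(G/B)$.
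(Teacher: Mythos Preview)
Your overall strategy matches the paper's: work in $T$-equivariant cohomology, use equivariant formality to express $H_T^*(\hess{s}{H})$ as a free $\mathrm{Sym}(\ft^*)$-module with fiber $H^*(\hess{s}{H})$, and compare two graded $W$-module decompositions after eliminating the contribution of $\mathrm{Sym}(\ft^*)^W$. Your uniqueness argument is fine, and your identification of the first description (yielding $\cc\otimes H^*(\hess{s}{H})$) is correct: once one checks that the dot action is compatible with the left $\mathrm{Sym}(\ft^*)$-action in the sense $w\cdot(p\,f)=w(p)\,(w\cdot f)$, a $W$-equivariant section of $H_T^*\to H_T^*/\langle\alpha_1,\dots,\alpha_n\rangle$ gives the $W$-module isomorphism $H_T^*\simeq \mathrm{Sym}(\ft^*)\otimes H^*(\hess{s}{H})$.

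The genuine gap is the second description. You assert that one should ``choose a $W$-equivariant basis whose span has the form $\mathsf{P}(G/B)\otimes\LLT_H$,'' but nothing in your outline explains why such a decomposition exists, nor what $\LLT_H$ actually \emph{is}. Your proposal never defines $\LLT_H$; it only names the shape the answer should take. The paper supplies exactly the missing ingredient: a \emph{second} $\mathrm{Sym}(\ft^*)$-module structure on $H_T^*(\hess{s}{H})$, visible in the GKM model $\ct=\bigoplus_{w\in W}\C[\ft^*]$, given by $(q\cdot f)_w = w(q)\,f_w$. Two facts make this work: (i) $H_T^*(\hess{s}{H})$ is also free over this ``right'' copy of $\C[\ft^*]$, and (ii) the dot action commutes with it (equivalently, $W$ acts trivially on the right copy $\C[R]$). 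One then \emph{defines} $\LLT_H := H_T^*(\hess{s}{H})/\langle\alpha_1,\dots,\alpha_n\rangle_R$, and freeness gives the $W$-module isomorphism $H_T^*\simeq \C[R]\otimes\LLT_H$ with $\C[R]$ carrying trivial $W$-action. Since $\C[R]\cong\mathrm{Sym}(\ft^*)$ as a graded vector space, its graded character is $\mathsf{P}(G/B)$ times the Poincar\'e series of $\mathrm{Sym}(\ft^*)^W$; comparing with the left decomposition and cancelling that common series yields~\eqref{eqn.LLT-dot-formula}. Your remark that ``Tymoczko's dot action acts by left translation on the $W$-argument'' is pointing in the right direction, but the substance---the right module structure, its freeness, and the resulting definition of $\LLT_H$---is absent from the proposal.
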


We call $\LLT_H$ the LLT representation; the reason for this terminology is that, in Type A, 
the Frobenius characteristic of $\LLT_H$ is a unicellular LLT polynomial (see Corollary~\ref{cor.typeA-characters} below).  

The coinvariant algebra $\cc$ carries the regular representation and the tensor product of any representation  of dimension $d$
with the regular representation is the direct sum of $d$ copies of the regular representation.  Therefore when $q=1$ both sides of~\eqref{eqn.LLT-dot-formula} are isomorphic to $|W|$ copies of the regular representation.   Thus $\LLT_H$ is a graded version of the regular representation.
At the same time, forgetting the $W$-actions, 
$H^*(\hess{s}{H})$ and $\LLT_H$ coincide as graded vector spaces 
since $\mathsf P(G/B)$ measures the dimension of the components of $\cc$.

To compute $\LLT_H$ from $H^*(\hess{s}{H})$ requires knowing the matrix ${\bf \tilde \Omega}$
with entries 
$${\bf \tilde \Omega}_{\varphi, \varphi'} = \sum_j \left( \varphi \otimes \varphi': \cc^j \right)q^j.$$
The matrix ${\bf \tilde \Omega}$ is closely related to the matrix needed as input to the Lusztig--Shoji algorithm.
Define polynomials $g^H_\varphi(q)$ by 
$$g^H_\varphi(q) = \sum_j (\varphi: LLT_H)q^{2j}.$$
Then 
\begin{equation} \label{compute_llt}
	{\mathsf{P}(G/B)} g^H_{\varphi} = \sum_{\tiny{\varphi' \in \Wr}} P(\hess{s}{H}; \varphi'){\bf \tilde \Omega}_{\varphi, \varphi'}  
	  =     \sum_{\tiny \varphi' \in \Wr} f^{H^\perp}_{\varphi'} {\bf \tilde \Omega}_{\varphi, \varphi'} .
\end{equation}
We have used Proposition~\ref{ss_hessy} and the fact that 
$$\Hom_W(\varphi \otimes \varphi', \cc^j) \simeq \Hom_W(\varphi, \varphi' \otimes \cc^j)$$ since 
representations $\varphi~\in~\Wr$ satisfy $\varphi^*\simeq \varphi$.

For \S \ref{sec.modular.revisited}  
we need to know the  $H^*(\hess{s}{H})$ and $LLT_H$ in the parabolic case, i.e., when $H = \fp$ the Lie algebra 
of the parabolic subgroup $P$.   
Let $\cc_P$ be the coinvariant algebra of $W_P$ as a Coxeter group, a graded representation of $W_P$.
\begin{prop} 	\label{cor.LLT-parabolic}
	 In the parabolic setting, we have
	\begin{enumerate}
		\item $H^*(\hess{s}{\fp})  \simeq  \mathsf P(P/B) \otimes {\Ind}_{W_P}^W( \bf 1)$. 
		\item $LLT_{\fp} \simeq \mathrm{Ind}_{W_P}^W(\cc_P)$. 
	\end{enumerate}
	 where the modules on the left are zero for odd degrees and the 
	 component in degree $2i$ on the left matches the one in degree $i$ on the right.
	\end{prop}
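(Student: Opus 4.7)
My plan is to deduce both parts from results already established in \S\ref{sec.gen_grothendieck}: combine Proposition~\ref{ss_hessy} with the Borho--MacPherson formula~\eqref{parabolic_case} for part~(1), and then feed the outcome into Proposition~\ref{prop.LLT} for part~(2). A direct root-space computation with the Killing form first identifies $\fp^\perp = \fu_P$. Proposition~\ref{ss_hessy} then gives
$$\mathsf P(\hess{s}{\fp}; \varphi) = f^{\fu_P}_{\varphi \otimes \mbox{sgn}}$$
for every $\varphi \in \Irr(W)$, and~\eqref{parabolic_case} simplifies the right-hand side to $\mathsf P(P/B) \cdot (\varphi \otimes \mbox{sgn} : \Ind_{W_P}^W(\mbox{sgn}))$. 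Using self-duality of $\mbox{sgn}$, the tensor-induction identity, and the fact that $\mbox{sgn}|_{W_P}$ is the sign character of $W_P$ (whose square is trivial), one rewrites
$$\Ind_{W_P}^W(\mbox{sgn}) \otimes \mbox{sgn} \;\simeq\; \Ind_{W_P}^W\!\bigl(\mbox{sgn} \otimes \mbox{sgn}|_{W_P}\bigr) \;\simeq\; \Ind_{W_P}^W(\mathbf{1}),$$
so the multiplicity becomes $(\varphi : \Ind_{W_P}^W(\mathbf{1}))$. Since $\hess{s}{\fp}$ has no odd cohomology (as one may also see directly: the image of $\hess{s}{\fp}$ in $G/P$ is the finite set of parabolics of type $P$ containing the regular $s$, indexed by $W/W_P$, and each fibre is isomorphic to $P/B$) and graded multiplicities of every $\varphi$ determine a graded $W$-module, part~(1) follows.

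For part~(2), substitute part~(1) into the defining relation~\eqref{eqn.LLT-dot-formula} and cancel the common polynomial factor $\mathsf P(P/B)$ from $\mathsf P(G/B) = \mathsf P(G/P) \cdot \mathsf P(P/B)$ (valid in the Grothendieck ring of graded $W$-modules since $\mathsf P(P/B)$ is not a zero divisor). This reduces the claim to the identity
$$\mathsf P(G/P) \otimes \Ind_{W_P}^W(\cc_P) \;\simeq\; \cc \otimes \Ind_{W_P}^W(\mathbf{1}).$$
The essential algebraic input is the classical Chevalley decomposition
$$\operatorname{Res}_{W_P}^W \cc \;\simeq\; \mathsf P(G/P) \otimes \cc_P$$
as graded $W_P$-modules, with trivial $W_P$-action on the $\mathsf P(G/P)$ factor; this follows from the freeness of $\C[\ft]$ over $\C[\ft]^{W_P}$ with a graded basis lifting $\cc_P$, followed by quotienting by $\C[\ft]^W_+ \C[\ft]$ and identifying $\C[\ft]^{W_P}/(\C[\ft]^W_+ \cdot \C[\ft]^{W_P})$ with $H^*(G/P)$. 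Combined with the tensor-induction identity,
$$\cc \otimes \Ind_{W_P}^W(\mathbf{1}) \;\simeq\; \Ind_{W_P}^W\!\bigl(\operatorname{Res}_{W_P}^W \cc\bigr) \;\simeq\; \mathsf P(G/P) \otimes \Ind_{W_P}^W(\cc_P),$$
so the uniqueness clause of Proposition~\ref{prop.LLT} yields $\LLT_\fp \simeq \Ind_{W_P}^W(\cc_P)$.

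\textbf{Main obstacle.} The principal work is bookkeeping rather than conceptual novelty. The delicate point is recording the $W_P$-equivariance, with trivial action on the $\mathsf P(G/P)$ factor, of the Chevalley decomposition above --- this is strictly stronger than the bare Leray--Hirsch statement for the Borel fibration $P/B \to G/B \to G/P$, and the free-module argument sketched above is the cleanest way to secure it. Secondarily, one must carefully reconcile the several grading conventions at play (cohomological, modified Poincar\'e as in~\eqref{modified_poincare}, and the ``halved'' grading implicit in $\LLT_H$ via~\eqref{compute_llt}) so that the assertion ``degree $2i$ on the left matches degree $i$ on the right'' is obtained rather than merely an isomorphism of ungraded $W$-modules.
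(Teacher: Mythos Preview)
Your proposal is correct and follows essentially the same route as the paper's own proof: part~(1) via Proposition~\ref{ss_hessy} combined with the Borho--MacPherson formula~\eqref{parabolic_case}, and part~(2) by feeding part~(1) into the defining relation~\eqref{eqn.LLT-dot-formula} and reducing to the $W_P$-equivariant decomposition $\operatorname{Res}_{W_P}^W\cc \simeq \mathsf P(G/P)\otimes \cc_P$. The only cosmetic difference is that the paper obtains this last decomposition from the fibre bundle $P/B \hookrightarrow G/B \to G/P$ (noting $\cc_P \simeq H^*(P/B)$ as $W_P$-modules), whereas you extract it from the freeness of $\C[\ft]$ over $\C[\ft]^{W_P}$; both are standard and yield the same graded $W_P$-module identity with trivial action on the $\mathsf P(G/P)$ factor.
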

\begin{proof} 
	Part (1) follows directly from Equation \eqref{parabolic_case} and Proposition~\ref{ss_hessy}.
	
	For (2), we have the isomorphism $\cc_P \simeq H^*(P/B)$, as $W_P$-representations, up to doubling of degrees.  
	Then the fibre bundle of $G/B$ over $G/P$ with fiber $P/B$ gives rise to the isomorphism
	$$H^*(G/B) \simeq H^*(G/P) \otimes H^*(P/B)$$ 
	as $W_P$-representations where the action on $H^*(G/P)$ is trivial.
	Hence, $\cc \simeq \mathsf P(G/P) \cc_P$ as $W_P$-representations.
	Inducing up to $W$, we have 
	$$\Ind_{W_P}^W (\cc)  \simeq \mathsf{P}(G/P) \otimes \Ind_{W_P}^W(\cc_P ) $$
	as $W$-modules, or equivalently,
	$$\cc \otimes \Ind_{W_P}^W ({\bf 1})  \simeq \mathsf{P}(G/P) \otimes \Ind_{W_P}^W(\cc_P ) $$
	since $\cc$ is a $W$-representation.
	Now using Part (1) and the fact that $\mathsf P(G/B) = 	\mathsf{P}(G/P) \mathsf{P}(P/B)$,
	the result follows.
\end{proof}

%%%%%Sec 5: The modular law%%%%%%%%%%%%%%%%%%

\section{The modular law}\label{sec.modular_law}

Let $M\subseteq \fg$ be a $B$-invariant subspace. 
As mentioned in Remark \ref{computation1}, 
the results of \S\ref{sec.decomposition} give a method to compute the isotypic component 
$H^*(\hess{x}{M})^\chi$ if we can compute the isotypic component $H^*(X_U)^\chi$ for all $U \in \ideals^{gen}_{2}$, where $\ideals^{gen}_{2}$ is defined as in \S\ref{building_blocks} using the grading induced by $x$. 
  
In \cite{dCLP1988}, a method is given to compute $H^*(X_U)^\chi$ that works for all distinguished nilpotent elements 
in the exceptional groups. Although it does not work in general, it is a powerful technique.  
In this section we prove a generalization of this method:  
for certain triples of $I_2 \subset I_1 \subset I_0$ of ideals in $\ideals$,
knowing any two of the $H^*(\hess{x}{I_i})^\chi$ determines the third.  We call this relation the geometric modular law.

We were led to this generalization after seeing the type $A$ combinatorial version 
in \cite{Abreu-Nigro2020} and \cite{Guay-Paquet2013},
where the relation is known as the modular law.  
In Proposition \ref{prop.comb-to-geometric} below, we show that our geometric modular law implies the combinatorial one.

\subsection{The basic move}\label{basic_move_defn}
We first define a relation on ideals $I_1, I_0 \in \ideals$ as in \cite{Fenn-Sommers2020}.

\begin{definition}\label{def.A2triple} 
     Two ideals $I_1, I_0 \in \ideals$ are related by the {\bf basic move} if $I_0 = I_1 \oplus \fg_{\beta}$ for $\beta \in \Phi^+$ 
     and there exists $\alpha\in \Delta$ such that 
	\begin{enumerate}
		\item $\left< \beta, \alpha^\vee \right> = -1$, and
		\item  The set $\Phi_{I_0}$ is invariant under the simple reflection $s_\alpha \in W$.
	\end{enumerate}
\end{definition}
The second condition is equivalent to $I_0$ being $P_\alpha$-stable, where $P_\alpha$ is 
the parabolic subgroup containing $B$ corresponding to $\alpha$.

If ideals $I_1\subset I_0$ satisfy $I_0 = I_1 \oplus \fg_{\beta}$, then 
$\beta$ is a minimal root in $\Phi_{I_0}$ under the partial order on positive roots.
If $I_1$ and $I_0$ are related by the basic move, then condition (1) implies that 
$s_\alpha(\beta) = \alpha+\beta$, which is a positive root bigger than $\beta$
in the partial order; hence, $\alpha+\beta \in \Phi_{I_1}$.  In fact, $\alpha+\beta$ is a minimal root of $\Phi_{I_1}$.  
Suppose otherwise; then $\Phi_{I_1}$ contains a positive root $\gamma = \alpha + \beta - \alpha'$ 
for a simple root $\alpha'$.  Now, $\alpha \neq \alpha'$ since $\beta \not\in \Phi_{I_1}$.   
Since $\left< \alpha', \alpha^\vee \right> \leq 0$ for any distinct simple roots
and $\left< \alpha + \beta, \alpha^\vee \right> = 1$  by condition (1), then $\left< \gamma, \alpha^\vee \right> \geq 1$.
But then $s_\alpha(\gamma)  < \beta$.  Since $\gamma \in \Phi_{I_1}  \subset \Phi_{I_0}$ and 
$$s_\alpha(\Phi_{I_0})= \Phi_{I_0}$$ by condition (2), this means $s_\alpha(\gamma) \in \Phi_{I_0}$
and we obtain a contradiction to $\beta$ being a minimal root in $\Phi_{I_0}$.

Thus we can define $I_2 \in \ideals$ to be the subspace
satisfying $I_1=I_2 \oplus \fg_{\alpha+\beta}$.  
It is clear that $I_2$ is $P_{\alpha}$-stable.  
We call $I_2\subset I_1\subset I_0$ a {\bf modular triple}, or just a {\bf triple}.
These triples were first constructed in \cite[\S 2.7]{dCLP1988}.

Since $P_{\alpha}.I_1 = I_0$, we have $\co_{I_1} = \co_{I_0}$,
while the orbit $\co_{I_2}$ need only satisfy $\co_{I_2} \subset \overline \co_{I_0}$.

\begin{example}\label{ex.typeAmove} 
	Consider the $A_3$ root system with $\Delta = \{\alpha_1, \alpha_2, \alpha_3\}$, where $\alpha_1$ and $\alpha_3$ orthogonal. 
	Let $I_0 \in \ideals$ be such that $\Phi_{I_0}$ has one minimal root $\beta= \alpha_2$.  
	Then $\beta$ satisfies condition (1) with respect to either $\alpha_1$ or $\alpha_3$.  
	Hence $I_1$ and $I_0$ are related by the basic move where $\Phi_{I_1}= \{ \alpha_1+\alpha_2, \alpha_2+\alpha_3, \alpha_1+\alpha_2+\alpha_3 \}$.  But there are two different triples
	that arise: $I_2$ will satisfy $I_1 = I_2 \oplus \fg_{\alpha_1+\alpha_2}$   or 
	$I_1 = I_2 \oplus \fg_{\alpha_2+\alpha_3}$ 
	depending on whether $\alpha = \alpha_1$ or $\alpha = \alpha_3$, respectively.
\end{example}

\subsection{Geometric modular law for the cohomology of fibers}
Whenever three subspaces form a modular triple,  they
satisfy the three conditions in \S 2.7 of \cite{dCLP1988} with 
$U'' \subset  U \subset U'$ in place of $I_2 \subset I_1 \subset I_0$ and $M = G$, $H = B$, and $P=P_{\alpha}$.

Fix $x \in \cn$ and let $X_i = \hess{x}{I_i}$.  
When $x \in \co_{I_0}$, it is shown in \cite[Lemmas 2.2]{dCLP1988}
that the $X_i$ are smooth ($X_2$ can be empty), as was noted in \S\ref{building_blocks}.  
In \cite[Lemma 2.11]{dCLP1988} it is proved that
there is a geometric relationship among the three varieties, which we now show holds
for all  $x \in \cn$, not just for $x \in \co_{I_0}$, 
and from this we deduce Proposition \ref{geom_modular_fiber}.
Since the $X_i$ are no longer smooth in general, 
our argument relies on the fact that 
$\hess{x}{I}$ has no odd homology for any $x \in \cn$ by Corollary \ref{odd_vanishing}.  We are now ready to prove the geometric modular law, which we restate here for the reader's convenience.

\begin{customprop}{1.2}[The geometric modular law]
	Let $I_2\subset I_1\subset I_0$ be a modular triple and let $x\in \cn$.   
	Write $X_i$ for $\hess{x}{I_i}$.
	Then
	\begin{eqnarray} \label{geometric_modular1}
		H^j(X_1) \oplus H^{j-2}(X_1) \simeq H^j(X_0) \oplus H^{j-2}(X_2) \,\text{ for all }\, j \in \mathbb Z.
	\end{eqnarray}
\end{customprop}

\begin{proof}  
	We again use Borel-Moore homology until the final step. 
	By Corollary \ref{odd_vanishing}, the odd homology of all $X_i$ vanish, so we need only consider $j$ even.
	Let $\alpha \in \Delta$ from the definition of the modular triple.  
	Consider 
	\[
	Z = \{(gB, g'B) \in G/B\times G/B \mid g^{-1}. x \in I_1 \textup{ and } g^{-1}g'\in P_\alpha \},
	\]
	as in Lemma 2.11 in  \cite{dCLP1988}.  Then $Z$ is a $\mathbb P^1$-bundle over $X_1$ by forgetting the second factor.
	Since $X_1$ has no odd homology, and neither does $\mathbb P^1$, the Leray spectral sequence degenerates and yields
	\begin{eqnarray} \label{p1_bundle_homology}
		H_j(Z) \simeq  H_j(X_1) \oplus H_{j-2}(X_1) \text{ for all  } j \in \mathbb N.
	\end{eqnarray}
	
	Next, as in {\it loc cit}, the variety $Z$ maps to a variety $Z'$, which is $\mathbb P^1$-bundle over $X_0$, 
	by sending $(gB, g'B) \in Z$ to $(gB, g'B)  \in Z'$, where 
	\[
	Z':=\{(gB, g'B) \in G/B\times G/B \mid g'^{-1}.x \in I_0 \textup{ and } g^{-1}g'\in P_\alpha \}.
	\]
	This works since $g' = gp$ for some $p \in P_\alpha$, so 
	$g'^{-1}.x = p^{-1}g^{-1} \in I_0$ if $g^{-1}.x \in I_1$ because $P_\alpha.I_1 = I_0$.
	Inside $Z'$ consider the subvariety $Y$ consisting of those
	$(gB, g'B) \in Z'$ where the line $g I_1/g'I_2 \subset g'I_0/g'I_2$ contains $x$.
	Then $Z$ is isomorphic to $Y$.  
	
	At the same time, $Y$ maps surjectively to $X_0$ and the pre-image of $X_2$
	is a  $\mathbb P^1$-bundle $E$ over $X_2$.  The complement of $E$ in $Y$ is isomorphic to $X_0 \backslash X_2$.
	In \cite{dCLP1988}, the proof ends since $X_2$ and $X_0$ are smooth and one knows the singular cohomology of $Y$, which is a blow-up of $X_0$ over $X_2$. 
	To get around the lack of smoothness in the general case, we again use that the $X_i$ have no odd Borel-Moore homology.
	
	First, $H_j(E) \simeq  H_j(X_2) \oplus H_{j-2}(X_2)$ for all $j$ as in \eqref{p1_bundle_homology} since $X_2$ has no odd homology. 
	Next, when $j$ is even, there are two long exact sequences in Borel-Moore homology.  For $X_2 \subset X_0$, we have 
	$$ 0 \to H_{j+1}(X_0 \backslash X_2)  \to  H_{j}(X_2)  \to  H_{j}(X_0)  \to H_{j}(X_0 \backslash X_2) \to 0$$ 
	and for $E \subset Y$, we have
	$$ 0 \to H_{j+1}(Y \backslash E)  \to  H_{j}(E)  \to  H_{j}(Y)  \to H_{j}(Y \backslash E) \to 0$$
	since $E$ has no odd homology. 
	Now, $Y \backslash E \simeq X_0 \backslash X_2$ and so it follows that 
	$$H_j(Y) \oplus H_j(X_2) \simeq H_j(E) \oplus H_j(X_0)$$ 
	as $\Q$-vector spaces.
	Thus $H_j(Y) \simeq H_j(X_0) \oplus H_{j-2}(X_2)$.
	The result follows from  \eqref{p1_bundle_homology} and the isomorphism $Z \simeq Y$.
	We can switch back to singular cohomology since the $X_i$ are projective varieties. 
\end{proof}

\subsection{Implications of the modular law}
It follows from Proposition \ref{geom_modular_fiber} that if $I_2\subset I_1\subset I_0$ is a triple of ideals, then 
\begin{equation}\label{first_kind}
	(1+q)\mathsf{P}(\hess{x}{I_1};\chi)  =  \mathsf{P}(\hess{x}{I_0};\chi) +q \mathsf{P}(\hess{x}{I_2};\chi). 
\end{equation}
This leads us to define
\begin{definition} \label{def:modular_law_general}
A collection of objects $\{F^I\}_{I \in \ideals}$ each carrying an action of $\Q(q)$
is said to {\bf satisfy the modular law} if 
\begin{equation}\label{second_kind}
	(1+q)F^{I_1}  =  { F}^{I_2} + q{ F}^{I_0}
\end{equation}
whenever $I_2\subset I_1\subset I_0$ is a triple of ideals.
\end{definition}
The $F^I$ could be polynomials or Laurent polynomials in $\Q(q)$ or a vector of such polynomials indexed by
$\Wr$ or, equivalently, $\sppairs$.  We could also take $F^I$ to be a graded representation of $\Wr$, which in type $A$
amounts to a symmetric function with coefficients in $\Q(q)$.

Notice that we have reversed the role of $I_0$ and $I_2$ in \eqref{second_kind} as compared to \eqref{first_kind}.

\begin{Rem}\label{linear_in_modular}
If $\{F^I_1\}$ and $\{F^I_2\}$ satisfy the modular law, so 
does $\{aF^I_1+ bF^I_2\}$ for any $a,b \in \Q(q)$.
\end{Rem}

\begin{prop} \label{mod_laws} 
	The following polynomials satisfy the modular law in \eqref{second_kind}.
	\begin{enumerate}
		\item $\mathsf{P}(\hess{x}{I};\chi)(q^{-1})$ for $(x, \chi) \in \sppairs$. 
		\item $f_\varphi^I(q)$ for $\varphi \in \Wr$.
		\item $g_\varphi^{I^\perp}(q)$ for $\varphi \in \Wr$.
		\item $\mathsf{P}(\hess{x}{I^{\perp}};\chi)(q)$ for $(x, \chi) \in \sppairs$. 
	\end{enumerate}
\end{prop}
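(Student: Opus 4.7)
The plan is to deduce all four claims from the geometric modular law of Proposition \ref{geom_modular_fiber}, cascading (3) and (4) from (2).

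For (1), extracting the dimension of the $\chi$-isotypic component from the $A(x)$-equivariant isomorphism in Proposition \ref{geom_modular_fiber} yields relation \eqref{first_kind}, namely
$$(1+q)\mathsf P(\hess{x}{I_1};\chi)(q) \;=\; \mathsf P(\hess{x}{I_0};\chi)(q) + q\,\mathsf P(\hess{x}{I_2};\chi)(q).$$
Substituting $q \mapsto q^{-1}$ and then multiplying through by $q$ interchanges the coefficients of $\mathsf P(\hess{x}{I_0};\chi)(q^{-1})$ and $\mathsf P(\hess{x}{I_2};\chi)(q^{-1})$, giving exactly the modular law \eqref{second_kind} for $F^I = \mathsf P(\hess{x}{I};\chi)(q^{-1})$.

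For (2), I would first observe that a triple forces $c_{I_1} = c_{I_0} + 1$ and $c_{I_2} = c_{I_1} + 1$. Multiplying \eqref{first_kind} by $q^{c_{I_1}}$ and applying these identities turns it into
$$(1+q)\,q^{c_{I_1}}\mathsf P(\hess{x}{I_1};\chi) \;=\; q\cdot q^{c_{I_0}}\mathsf P(\hess{x}{I_0};\chi) + q^{c_{I_2}}\mathsf P(\hess{x}{I_2};\chi).$$
Next, equation \eqref{poincare_decomp2} (after the change of variable $q^2 \mapsto q$) expresses $q^{c_I}\mathsf P(\hess{x}{I};\chi)(q)$ as $\sum_\varphi f^I_\varphi(q)\,{\bf K}_{\varphi,\varphi'}(q)$. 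Substituting for each of $I_0, I_1, I_2$ rewrites the identity above as
$$\sum_\varphi \bigl[(1+q) f^{I_1}_\varphi - f^{I_2}_\varphi - q f^{I_0}_\varphi\bigr] {\bf K}_{\varphi,\varphi'} \;=\; 0$$
for every $\varphi' \in \Wr$. Since ${\bf K}$ is lower triangular with monomials in $q$ along the diagonal, it is invertible over $\Q(q)$; hence each bracket vanishes, which is the modular law for $\{f^I_\varphi\}$.

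For (3) and (4), I would appeal to Remark \ref{linear_in_modular}. Equation \eqref{compute_llt} writes $\mathsf P(G/B)\,g^{I^\perp}_\varphi$ as a $\Q(q)$-linear combination of the $f^I_{\varphi'}$ with coefficients ${\bf \tilde\Omega}_{\varphi,\varphi'}$ that are independent of $I$; Proposition \ref{prop:nilpo_hessy} similarly writes $\mathsf P(\hess{x}{I^\perp};\chi)$ as a $\Q(q)$-linear combination of the $f^I_{\varphi\otimes \mbox{sgn}}$ with coefficients ${\bf K}_{\varphi,\varphi'}$ independent of $I$. Since each $f^I_\psi$ satisfies the modular law by (2), and $\mathsf P(G/B)$ is a nonzero polynomial, Remark \ref{linear_in_modular} immediately yields (3) and (4). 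The only real subtlety in the argument lies in (2): one must track the fact that the coefficient roles of $I_0$ and $I_2$ in the geometric modular law \eqref{first_kind} are interchanged relative to \eqref{second_kind}, and this interchange is precisely compensated by the codimension jump $c_{I_2} - c_{I_0} = 2$, which is what makes the $q^{c_I}$ weighting convert one form of the law into the other.
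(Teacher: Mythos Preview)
Your proof is correct and follows essentially the same approach as the paper: derive (1) from the geometric modular law via the $q\mapsto q^{-1}$ substitution, then pass to (2) using the invertibility of $\mathbf K$ together with the codimension shifts $c_{I_2}=c_{I_1}+1=c_{I_0}+2$, and finally read off (3) and (4) from \eqref{compute_llt} and Proposition~\ref{prop:nilpo_hessy} via Remark~\ref{linear_in_modular}. Your handling of (2) is in fact slightly more direct than the paper's: you multiply \eqref{first_kind} by $q^{c_{I_1}}$ and substitute $q^{c_I}\mathsf P(\hess{x}{I};\chi)=\sum_\varphi f^I_\varphi\,{\bf K}_{\varphi,\varphi'}$ immediately, whereas the paper first passes through part (1) (so through $q^{-1}$) to conclude that $q^{-c_I}f^I_\varphi(q^{-1})$ satisfies the modular law, and only then unwinds the codimension shifts and substitutes $q^{-1}\mapsto q$; both routes are the same computation up to this reordering.
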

\begin{proof}
Statement (1) follows \eqref{first_kind} by substituting $q^{-1}$ for $q$ and then multiplying by $q$.

For (2), the determinant of $\bf K$ equals $q^m$ for some $m \in \mathbb N$.  So $\bf K$ is invertible (and
in fact the entries of $q^m{\bf K}^{-1}$ are polynomials).  We want to convert  \eqref{poincare_decomp2} into a matrix equation.
To that end, we construct vectors $(\mathsf P(\hess{x}{I}; \chi))$ and $\left(q^{-c_I}f^{I}_\varphi\right)$
using the linear order on $\sppairs$ and $\Wr$, respectively, from \S\ref{sec.proof_thmA}.  
Then   \eqref{poincare_decomp2} becomes the matrix-vector equation
$$(\mathsf P(\hess{x}{I}; \chi)) =   (q^{-c_I}f^{I}_\varphi) {\bf K}$$
and therefore
$$\left(q^{-c_I}f^{I}_\varphi\right) =(\mathsf{P}(\hess{x}{I};\chi)) {\bf K^{-1}}.$$
Now part (1) and the Remark \ref{linear_in_modular} 
imply that $q^{-c_I}f^{I}_\varphi(q^{-1})$ satisfies the modular law for all $\varphi \in \Wr$.
Multiplying by $q^{c_{I_2}}$ we have  
$$(1+q) \cdot q^{-1}f^{I_1}_\varphi(q^{-1})) =  f^{I_2}_\varphi(q^{-1}) + q \cdot q^{-2}f^{I_0}_\varphi(q^{-1})$$ 
and the result follows by replacing $q^{-1}$ with $q$.

Statement (3) now follows from (2) and \eqref{compute_llt} and Remark \ref{linear_in_modular}.  
Similarly (4) follows from Proposition~\ref{prop:nilpo_hessy}. 
\end{proof}

\begin{Rem}
It is also possible to prove that $\mathsf{P}(\hess{x}{H};\chi)$ satisfies the modular law by adapting the geometric proof
in Proposition \ref{geom_modular_fiber} to the varieties $\hess{x}{H}$.
\end{Rem}

\subsection{Alternative formulation of Proposition \ref{geom_modular_fiber}}\label{sec.alt-modular}

As discussed in the introduction, if $Q$ is any parabolic subgroup stabilizing $I \in \ideals$, 
then we can consider the map 
$$\mu^{I,Q}: G \times ^Q I \to \overline{\co_I}$$ 
and its derived pushforward
$$\cs_{I,Q}:= R\mu^{I,Q}_*(\sh[\dim G/Q+\dim I]).$$
Suppose $Q'$ is another parabolic subgroup stabilizing $I$ with $Q' \subset Q$.
Then 
\begin{equation}\label{change_parabolics}
\cs_{I,Q'}= H^*(Q/Q')[\dim Q/Q'] \otimes \cs_{I,Q}
\end{equation}
since 
$G \times^{Q'} I$ is a fiber bundle over $G \times^Q I$ with fiber $Q/Q'$.

Now let $I_2\subset I_1\subset I_0$ be a modular triple of ideals.
Since the $f^I_{\varphi}$ satisfy $(1+q)f^{I_1}_\varphi =  f^{I_2}_\varphi + q f^{I_0}_\varphi$ by Proposition \ref{mod_laws},
the $m^I_{u,\phi}$ from \S\ref{sec.proof_thmA} satisfy 
$$(q+q^{-1})m^{I_1}_{u,\phi}=  m^{I_2}_{u,\phi} +  m^{I_0}_{u,\phi}$$ 
for all $(u,\phi) \in \sppairs$.
Since the $m^I_{u,\phi}$ determine the $V^I_{u,\phi}$ in Equation \eqref{decomp_springerized}, it follows that
$$(q+q^{-1})\cs_{I_1,B}  = \cs_{I_0,B} \oplus \cs_{I_2,B}. $$

Since $I_1$ and $I_0$ are $P_\alpha$-stable and 
$\mathsf P(P_\alpha/B) = 1+q$, we 
have
$\cs_{I_0,B}= (q+q^{-1})\cs_{I_0,P_\alpha}$ and
$\cs_{I_2,B}= (q+q^{-1})\cs_{I_2,P_\alpha}$.
Hence, clearing away the $(q+q^{-1})$ from all three terms, we have the following.

\begin{prop} \label{ThmB}\label{geom_modular_v2}
	Let $I_2\subset I_1\subset I_0$ be a modular triple.   There is an isomorphism
	$$\cs_{I_1,B} \simeq \cs_{I_2,P_\alpha} \oplus \cs_{I_0,P_\alpha} $$  
	in the derived category of $G$-equivariant perverse sheaves on $\cn$.
\end{prop}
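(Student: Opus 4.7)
The plan is to reduce the sheaf-level isomorphism to an equality of graded multiplicities in the IC-decomposition of each sheaf, and then to use the modular law on the polynomials $m^I_\varphi$ (which is already packaged in Proposition~\ref{mod_laws}(2)) together with the parabolic base-change relation \eqref{change_parabolics}. Concretely, since $I$ is $B$-stable, the map $\mu^{I,B}$ is proper and so by the decomposition theorem and Theorem~\ref{ThmA} one has
\[
\cs_{I,B} \simeq \bigoplus_{\varphi \in \Irr(W)} IC(\bar\co, \cl) \otimes V^I_{\varphi},
\]
where $V^I_\varphi$ is a graded vector space with graded dimension $m^I_\varphi(q)$; analogously, $\cs_{I,P_\alpha}$ is a direct sum of shifted IC sheaves with graded multiplicities that I will denote $m^{I,P_\alpha}_\varphi(q)$. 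Because the semisimple category of $G$-equivariant perverse sheaves on $\cn$ is indexed by $\sppairs$ with graded Laurent-polynomial multiplicities, to prove the claimed isomorphism it suffices to verify the equality of graded multiplicities for each $\varphi$.

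Next, I would translate the modular law on $f^I_\varphi$ established in Proposition~\ref{mod_laws}(2) back into a relation on $m^I_\varphi$. Using the defining relation $f^I_\varphi(q^2) = q^{c_I} m^I_\varphi(q)$ together with the codimension identities $c_{I_2} = c_{I_1}+1 = c_{I_0}+2$ coming from the definition of a modular triple, the law $(1+q)f^{I_1}_\varphi = f^{I_2}_\varphi + q f^{I_0}_\varphi$ rearranges to
\[
(q+q^{-1})\, m^{I_1}_\varphi(q) \;=\; m^{I_2}_\varphi(q) + m^{I_0}_\varphi(q),
\]
for every $\varphi \in \Irr(W)$. Reading this identity as one of graded multiplicities in the IC-decomposition, it lifts to the sheaf-level statement $(q+q^{-1})\,\cs_{I_1,B} \simeq \cs_{I_0,B} \oplus \cs_{I_2,B}$, where the factor $(q+q^{-1})$ denotes the graded vector space $\C[1]\oplus\C[-1]$.

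Now I would invoke equation \eqref{change_parabolics} applied to $Q' = B \subset Q = P_\alpha$, which is available for $I_0$ and $I_2$ precisely because they are $P_\alpha$-stable by definition of a modular triple. Since $P_\alpha/B \simeq \PP^1$, we have $H^*(P_\alpha/B)[1] \simeq \C[1]\oplus\C[-1] = (q+q^{-1})$ as graded vector spaces, giving
\[
\cs_{I_0,B} \simeq (q+q^{-1})\,\cs_{I_0,P_\alpha}, \qquad \cs_{I_2,B} \simeq (q+q^{-1})\,\cs_{I_2,P_\alpha}.
\]
Substituting into the previous identity yields
\[
(q+q^{-1})\,\cs_{I_1,B} \;\simeq\; (q+q^{-1})\,\bigl(\cs_{I_2,P_\alpha} \oplus \cs_{I_0,P_\alpha}\bigr).
\]

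The last step is cancellation of the common factor $(q+q^{-1})$. The main (very minor) obstacle is justifying this at the sheaf level rather than numerically; but since both sides are semisimple direct sums of shifted simple perverse sheaves, the graded multiplicity of each simple summand on either side is a Laurent polynomial in $q$, and the equality after multiplication by $(q+q^{-1})$ forces equality of those polynomials because $(q+q^{-1})$ is a non-zero divisor in $\Z[q,q^{-1}]$. The resulting equality of graded multiplicities for every $\varphi$ gives the desired isomorphism $\cs_{I_1,B} \simeq \cs_{I_2,P_\alpha} \oplus \cs_{I_0,P_\alpha}$ in the $G$-equivariant derived category on $\cn$.
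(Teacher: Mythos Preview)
Your proposal is correct and follows essentially the same route as the paper: translate Proposition~\ref{mod_laws}(2) into the identity $(q+q^{-1})m^{I_1}_\varphi = m^{I_2}_\varphi + m^{I_0}_\varphi$, lift to $(q+q^{-1})\cs_{I_1,B} \simeq \cs_{I_0,B}\oplus\cs_{I_2,B}$, apply \eqref{change_parabolics} for the $P_\alpha$-stable ideals $I_0,I_2$, and cancel. You are in fact slightly more careful than the paper in justifying the cancellation via semisimplicity, and you correctly identify $I_0$ and $I_2$ (rather than $I_1$) as the $P_\alpha$-stable members of the triple.
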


%%%%%%%%%%%%%%%%%%%%%%%%%%%%%%%%%%%%%

\section{Type $A$ results} \label{sec.modular.revisited}   

In Proposition \ref{mod_laws} we proved that the polynomials 
$f_\varphi^I$ and $g_\varphi^{I^\perp}$ satisfy the modular law of \eqref{second_kind}.
We now connect those results to a combinatorial modular law for symmetric functions, proving that the two notions are equivalent in the type A case. 

Let $G=SL_n(\C)$ throughout this section, $B$ be the set of upper triangular matrices in $G$, and $T$ the set of diagonal matrices.  Let $E_{ij}$ denote the elementary matrix with $1$ in entry $(i,j)$ and all other entries equal to $0$.
Then the $E_{ij}$ for $i<j$ are basis vectors of the positive root spaces of $\fg=\mathfrak{sl}_n(\C)$ relative to $\ft$ and $\fb$.  
The positive roots are
$\epsilon_i - \epsilon_{j}$ for $i<j$, where $\epsilon_k$ denotes the linear dual of $E_{kk}$,
and the simple roots $\Delta$ are $\alpha_k := \epsilon_k - \epsilon_{k+1}$ for $1\leq k \leq n-1$. 
The simple reflection $s_k:= s_{\alpha_k}$ corresponds to the simple transposition in $W\simeq S_n$ exchanging $k$ and $k+1$.

The modular law was first introduced for chromatic symmetric functions by Guay--Paquet in~\cite{Guay-Paquet2013}.   More recently, Abreu and Nigro showed that any collection of multiplicative symmetric functions satisfying the modular law are uniquely determined up to some initial values~\cite{Abreu-Nigro2020}.   As an application of our results, we apply their theorem to compute the Frobenius characteristic of the dot action and LLT representations, recovering results of Brosnan--Chow and Guay--Paquet \cite{Brosnan-Chow2018, Guay-Paquet2016}.

\subsection{The combinatorial modular law}
We introduce the combinatorial modular law in the context of Hessenberg functions.  In type $A_{n-1}$, each ideal $I\in \ideals$
uniquely determines, and is determined by, 
a weakly increasing function 
$$h: \{1,2, \dots, n\} \to \{1,2, \dots, n\}$$
such that $i \leq h(i)$ for all $i$.
Such a function is called a Hessenberg function and its Hessenberg vector is $(h(1), \dots, h(n))$.
Given a Hessenberg function $h$, the ideal $I_h \in \ideals$ corresponding to $h$ is given by
\begin{eqnarray}\label{eqn.Hess-function}
I_h:= \mathrm{span}_{\C}\{ E_{ij} \mid  h(i)<j \}.
\end{eqnarray}

The function $h$ also determines a lattice path from the upper left corner of an $n\times n$ grid to the lower right corner, by requiring that the vertical step in row $i$ occurs $h(i)$ columns from the left. The requirement that $h(i)\geq i$ guarantees that this lattice path never crosses the diagonal.
Thus, Hessenberg functions (and ideals $\ideals$ and Hessenberg spaces $\ch$) are in bijection with the set of Dyck paths of length $2n$.  
By a slight abuse of notation,
we write $\ch$ for the set of Hessenberg functions, or equivalently Dyck paths, throughout this section.

\begin{example}\label{ex1.typeA} For $n=4$, the Hessenberg function $h$ with vector $(2,3,3,4)$ corresponds to the ideal $I_h = \mathrm{span}_\C\{E_{13}, E_{14}, E_{24}, E_{34}\}$ and defines the following lattice path.  The matrices in $I_h$ are those with all zeros below the path.  
\begin{center}
\begin{tikzpicture}[scale=.4]
\draw (0,0) grid (4,4);
\draw[ultra thick,blue] 
(0,4) -- (2,4) -- (2,3) -- (3,3) -- (3,1)-- (4,1) -- (4,0);
\end{tikzpicture}
\end{center}
\end{example}

\begin{lemma} \label{lem.min-roots}
Let $h \in \ch$ and $\beta=\epsilon_i-\epsilon_j \in \Phi_{I_h}$.  Then $\beta$ is a minimal root of $\Phi_{I_h}$ if and only if $j=h(i)+1$ and $h(i)<h(i+1)$.
\end{lemma}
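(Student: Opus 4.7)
The plan is to characterize minimality purely in terms of the immediate predecessors of $\beta$ in the positive-root poset, then translate those covering conditions into inequalities on $h$.

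First, I would record a general fact: because $I_h$ is $B$-stable, the set $\Phi_{I_h}$ is an upward-closed subset of $(\Phi^+, \leq)$. Consequently, $\beta \in \Phi_{I_h}$ is minimal in $\Phi_{I_h}$ if and only if, for every simple root $\alpha \in \Delta$ with $\beta - \alpha \in \Phi^+$, we have $\beta - \alpha \notin \Phi_{I_h}$. In type $A_{n-1}$, for $\beta = \epsilon_i - \epsilon_j$ with $i<j$, the only simple roots $\alpha$ such that $\beta - \alpha$ is a positive root are $\alpha_i$ (yielding $\epsilon_{i+1}-\epsilon_j$, which is a root precisely when $i+1<j$) and $\alpha_{j-1}$ (yielding $\epsilon_i-\epsilon_{j-1}$, which is a root precisely when $i<j-1$). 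So $\beta$ is minimal in $\Phi_{I_h}$ if and only if each of these two candidate predecessors either fails to be a positive root or fails to lie in $\Phi_{I_h}$.

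Next, I would translate these conditions using \eqref{eqn.Hess-function}, which gives $\Phi_{I_h} = \{\epsilon_a - \epsilon_b : a < b,\ h(a) < b\}$. Thus $\epsilon_i - \epsilon_{j-1} \notin \Phi_{I_h}$ (when $i<j-1$) is equivalent to $h(i) \geq j-1$. Combined with $\beta \in \Phi_{I_h}$, which gives $h(i) < j$, this forces $h(i) = j-1$, i.e.\ $j = h(i)+1$. Similarly, $\epsilon_{i+1} - \epsilon_j \notin \Phi_{I_h}$ (when $i+1<j$) is equivalent to $h(i+1) \geq j = h(i)+1$, i.e.\ $h(i) < h(i+1)$.

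Finally, I would check the degenerate case $j = i+1$, where $\epsilon_{i+1}-\epsilon_j = 0$ is not a root so the covering condition on $\alpha_i$ is vacuous. Here $\beta = \alpha_i \in \Phi_{I_h}$ forces $h(i) \leq i$, so $h(i) = i$ (since $h(i) \geq i$ always), confirming $j = h(i)+1$; and $h(i+1) \geq i+1 > i = h(i)$, so the inequality $h(i) < h(i+1)$ holds automatically. Thus the condition $j = h(i)+1$ and $h(i) < h(i+1)$ correctly characterizes minimality in all cases. No step here presents a real obstacle; the only care needed is in handling the boundary case $j = i+1$ so that the stated conjunction remains the right formulation uniformly.
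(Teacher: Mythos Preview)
Your proposal is correct and follows essentially the same approach as the paper: both arguments reduce minimality to the two covering conditions $\epsilon_i-\epsilon_{j-1}\notin\Phi_{I_h}$ and $\epsilon_{i+1}-\epsilon_j\notin\Phi_{I_h}$, translate these via \eqref{eqn.Hess-function} into $h(i)=j-1$ and $h(i+1)\geq j$, and treat the boundary case $j=i+1$ separately (the paper does it first, you do it last). The only cosmetic gap is that in the $j=i+1$ case you mention only that the $\alpha_i$-covering is vacuous, whereas in fact $\alpha_{j-1}=\alpha_i$ there as well, so both coverings collapse simultaneously; this is implicit in your conclusion but worth stating.
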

\begin{proof} Suppose $\beta =\epsilon_i - \epsilon_j \in \Phi_{I_h}$, i.e.~that $h(i)<j$.  Assume first that $j=i+1$.  In this case, the lemma is trivial since $\beta$ is a simple root (and thus a minimal root of $\Phi^+$) and $h(i)<i+1$ if and only if $h(i)=i$.  We may therefore assume $j>i+1$ for the remainder of the proof.  In this case, we have $\alpha\in \Delta$ such that $\beta-\alpha \in \Phi$ if and only if $\alpha = \alpha_i$ or $\alpha= \alpha_{j-1}$ since $\Phi$ is a type $A$ root system. Now $\beta\in \Phi_{I_h}$ is minimal if and only if 
\[
\beta -\alpha_{j-1} =\epsilon_i-\epsilon_{j-1} \notin \Phi_{I_h} \; \textup{ and } \; \beta - \alpha_{i} = \epsilon_{i+1}-\epsilon_j \notin \Phi_{I_h},
\]
or equivalently, $h(i) \geq j-1$ and $h(i+1)\geq j$. As $h(i)<j$ these conditions are equivalent to $h(i)=j-1$ and $h(i)<h(i+1)$, as desired.
\end{proof}

\begin{lemma}\label{lem.reflection-action} 
Let $h \in \ch$ and $k\in \{1, 2, \ldots, n-1\}$.  Then $s_k(\Phi_{I_h}) = \Phi_{I_h}$
if and only if $h(k)=h(k+1)$ and $h^{-1}(k)=\emptyset$.
\end{lemma}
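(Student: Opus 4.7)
The simple reflection $s_k$ corresponds to the transposition $(k,k+1) \in S_n \simeq W$, so it acts on the weights $\epsilon_i$ by swapping $\epsilon_k$ and $\epsilon_{k+1}$. Since $s_k$ is an involution and $\Phi_{I_h}$ is finite, the condition $s_k(\Phi_{I_h}) = \Phi_{I_h}$ is equivalent to requiring that $s_k(\beta) \in \Phi_{I_h}$ for every $\beta = \epsilon_i - \epsilon_j \in \Phi_{I_h}$. Using the criterion from \eqref{eqn.Hess-function} that $\epsilon_i - \epsilon_j \in \Phi_{I_h}$ if and only if $h(i) < j$, my plan is to reduce the problem to a short case analysis based on how $\{i,j\}$ meets $\{k,k+1\}$.

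When $\{i,j\} \cap \{k,k+1\} = \emptyset$, the root $\beta$ is fixed by $s_k$ and no condition arises. The case $\beta = \alpha_k$ is special, since $s_k(\alpha_k) = -\alpha_k \notin \Phi^+$; this forces $\alpha_k \notin \Phi_{I_h}$, i.e., $h(k) \geq k+1$. In the remaining cases, $s_k$ pairs up either the roots $\epsilon_k - \epsilon_j$ and $\epsilon_{k+1} - \epsilon_j$ for $j \geq k+2$, or the roots $\epsilon_i - \epsilon_k$ and $\epsilon_i - \epsilon_{k+1}$ for $i \leq k-1$. Requiring that both members of each pair either lie in or avoid $\Phi_{I_h}$ translates into the two conditions
$$h(k) < j \Longleftrightarrow h(k+1) < j \text{ for all } j \geq k+2,$$
$$h(i) < k \Longleftrightarrow h(i) < k+1 \text{ for all } i \leq k-1.$$

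The second condition is precisely $h(i) \neq k$ for every $i \leq k-1$. For the first, monotonicity $h(k) \leq h(k+1)$ makes the backward implication automatic, while combined with the constraint $h(k) \geq k+1$ from the $\alpha_k$ case, the forward implication collapses to $h(k) = h(k+1)$: any strict inequality $h(k) < h(k+1)$ would produce $j = h(k+1) \geq k+2$ separating the two sides. Combining $h(k) \neq k$, $h(i) \neq k$ for $i < k$, $h(k+1) = h(k) \geq k+1 \neq k$, and the trivial observation $h(i) \geq i > k$ for $i > k+1$ yields $h^{-1}(k) = \emptyset$, which together with $h(k) = h(k+1)$ is the claim. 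The converse is obtained by retracing the case analysis in reverse. No obstacles are expected; the argument is a purely combinatorial verification once the action of $s_k$ on roots is unpacked, and the only subtlety worth flagging is to treat $\beta = \alpha_k$ separately since $s_k$ does not preserve positivity there.
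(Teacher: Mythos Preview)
Your proof is correct and follows essentially the same strategy as the paper's: both are case analyses on how a root $\epsilon_i-\epsilon_j$ interacts with $s_k$, translating $s_k$-invariance of $\Phi_{I_h}$ into conditions on the values of $h$. The organizational difference is that the paper treats the two implications separately, invoking the ideal property of $I_h$ (to dispose of roots with $\langle\beta,\alpha_k^\vee\rangle\le 0$ in one stroke) and Lemma~\ref{lem.min-roots} on minimal roots for the contrapositive, whereas you set up the case split symmetrically as a chain of equivalences and so avoid both auxiliary inputs. Your route is slightly more self-contained; the paper's buys a shorter forward direction at the cost of importing the minimal-root lemma.
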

\begin{proof} Suppose first that $h(k)=h(k+1)$ and $h^{-1}(k)=\emptyset$.  Note that $h(k)=h(k+1)\Rightarrow h(k)\geq k+1$ so $\alpha_k \notin \Phi_{I_h}$.  Let $\beta= \epsilon_i-\epsilon_j \in \Phi_{I_h}$, so $h(i)<j$. Consider $s_k(\beta) = \beta - \langle \beta, \alpha_k^\vee \rangle \alpha_k$.  If $\langle \beta, \alpha_k^\vee \rangle \leq 0$ then $s_k(\beta) \in \Phi_{I_h}$ since $I_h \in \ideals$.  We may therefore assume $\langle \beta, \alpha_k^\vee \rangle >0$, in which case we have either $i=k$ or $j=k+1$ since $\Phi$ is a type A root system and $\beta\neq \alpha_k$. If $i=k$ then $s_k(\beta) = \epsilon_{k+1}-\epsilon_j \in \Phi_{I_h}$ since $h(k+1) = h(i)<j$.  If $j=k+1$, then $s_k(\beta) = \epsilon_{i}-\epsilon_k \in \Phi_{I_h}$ since $h(i)<k+1$ and $h^{-1}(k)=\emptyset$ implies $h(i)<k$. 

Now suppose either $h(k)<h(k+1)$ or $h^{-1}(k)\neq \emptyset$.  If $h(k)<h(k+1)$, then $\epsilon_k- \epsilon_{h(k)+1}$ is a minimal root of $\Phi_{I_h}$ by Lemma~\ref{lem.min-roots} so 
\[
s_k(\epsilon_k - \epsilon_{h(k)+1}) = \epsilon_{k+1}- \epsilon_{h(k)+1} =  (\epsilon_k - \epsilon_{h(k)+1}) -\alpha_k \notin \Phi_{I_h},
\]
and $\Phi_{I_h}$ is not $s_k$-invariant.  If $h^{-1}(k)\neq \emptyset$ then there exists $i\in \{1,2, \ldots, n-1\}$ such that $h(i)=k$ and $h(i)<h(i+1)$.  We have that $\epsilon_i - \epsilon_{k+1}$ is a minimal root of $\Phi_{I_h}$ so 
\[
s_k(\epsilon_i - \epsilon_{k+1}) = \epsilon_{i}- \epsilon_{k} =  (\epsilon_i - \epsilon_{k+1}) -\alpha_k \notin \Phi_{I_h},
\]
and, as before, $\Phi_{I_h}$ is not $s_k$-invariant.  
\end{proof}

We now introduce the triples used to define the combinatorial modular law as in~\cite[Def.~2.1]{Abreu-Nigro2020}.  

\begin{definition} \label{def.modular-typeA} We say  $h_0,h_1, h_2 \in \ch$ is a \textbf{combinatorial triple} whenever one of the following two conditions holds:
\begin{enumerate}
\item There exists $i\in \{1, 2, \ldots, n-1\}$ such that $h_1(i-1)<h_1(i)<h_1(i+1)$ and $h_1(h_1(i)) = h_1(h_1(i)+1)$.
Moreover, $h_0$ and $h_2$ are defined to be: 
\begin{eqnarray}\label{eqn.h0h2cond1}
h_0(j)&=& \left\{\begin{array}{ll} h_1(j) & \textup{$j\neq i$}\\ h_1(i)-1 & \textup{$j=i$} \end{array}\right. \textup{ and }\\
 \nonumber h_2(j)&=& \left\{\begin{array}{ll} h_1(j) & \textup{$j\neq i$}\\ h_1(i)+1 & \textup{$j=i$.} \end{array}\right.
\end{eqnarray}
\item There exists $i\in \{1, 2, \ldots, n-1\}$ such that $h_1(i+1) = h_1(i)+1$ and $h_1^{-1}(i)=\emptyset$.  Moreover $h_0$ and $h_2$ are defined to be:
\begin{eqnarray}\label{eqn.h0h2cond2}
h_0(j)&=& \left\{\begin{array}{ll} h_1(j) & \textup{$j\neq i+1$}\\ h_1(i) & \textup{$j= i+1$} \end{array}\right. \textup{ and } \\
\nonumber h_2(j)&=& \left\{\begin{array}{ll} h_1(j) & \textup{$j\neq i $}\\ h_1(i+1) & \textup{$j=i$.} \end{array}\right.
\end{eqnarray}
\end{enumerate}
\end{definition}

\begin{example}\label{ex.typeA.1} Let $n=6$ and set $h_1=(2,3,4,6,6,6)$.  Then $h_1$ satisfies condition (1) of Definition~\ref{def.modular-typeA} for $i=3$ since $h_1(2)<h_1(3)<h_1(4)$ and
\[
h_1 (h_1(3)) = h_1(4) = 6 = h_1(4+1) = h_1(h_1(3)+1).
\]
Using~\eqref{eqn.h0h2cond1} we obtain $h_0=(2,3,3,6,6,6)$ and $h_2=(2,3,5,6,6,6)$.  
\end{example}

Given a triple $h_0, h_1, h_2$ of  Hessenberg functions we let $I_0, I_1, I_2\in \ideals$ be the corresponding ideals defined as in~\eqref{eqn.Hess-function}.  

\begin{lemma} \label{lemma.triples}
The Hessenberg functions $h_0, h_1, h_2$ form a combinatorial triple if and only if $I_2\subset I_1\subset I_0 $ is a modular triple of ideals.
\end{lemma}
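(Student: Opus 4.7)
The plan is to translate the Lie-theoretic data of a modular triple into combinatorial data via the dictionary provided by \eqref{eqn.Hess-function}, and then to leverage Lemmas~\ref{lem.min-roots} and \ref{lem.reflection-action}, which already convert the root-theoretic conditions (minimality of $\beta$ and $s_\alpha$-invariance of $\Phi_{I_0}$) into conditions on the Hessenberg function. Both directions will then be essentially bookkeeping, done by the two cases.

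For the forward direction, assume $(h_0,h_1,h_2)$ is a combinatorial triple. In case (1), set $k := h_1(i)$ and let $\beta = \epsilon_i-\epsilon_k$. Using that $h_0$ and $h_1$ agree except at $i$, where $h_0(i)=k-1$, one reads off directly from~\eqref{eqn.Hess-function} that $I_{h_0}=I_{h_1}\oplus \fg_\beta$, and Lemma~\ref{lem.min-roots} identifies $\beta$ as a minimal root of $\Phi_{I_0}$ since $h_0(i)<h_0(i+1)=h_1(i+1)$. The assumption $h_1(i-1)<h_1(i)$ together with $h_1(i)\geq i$ forces $k>i$, so $\alpha:=\alpha_k\in\Delta$ satisfies $\langle \beta,\alpha^\vee\rangle=-1$. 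To verify $s_{\alpha_k}$-invariance of $\Phi_{I_0}$ via Lemma~\ref{lem.reflection-action}, note that $k>i$ and $k+1>i$ imply $h_0(k)=h_1(k)$ and $h_0(k+1)=h_1(k+1)$, so the equality $h_0(k)=h_0(k+1)$ is exactly $h_1(h_1(i))=h_1(h_1(i)+1)$; the strict inequalities $h_1(i-1)<k<h_1(i+1)$ combined with weak monotonicity show $h_1^{-1}(k)=\{i\}$, whence $h_0^{-1}(k)=\emptyset$. Finally, $I_{h_1}=I_{h_2}\oplus\fg_{\alpha+\beta}$ with $\alpha+\beta=\epsilon_i-\epsilon_{k+1}$. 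Case (2) is handled symmetrically, with $\beta=\epsilon_{i+1}-\epsilon_{h_1(i)+1}$, $\alpha=\alpha_i$, and $\alpha+\beta=\epsilon_i-\epsilon_{h_1(i)+1}$; the conditions $h_1(i+1)=h_1(i)+1$ and $h_1^{-1}(i)=\emptyset$ translate precisely into the two hypotheses of Lemma~\ref{lem.reflection-action}.

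For the reverse direction, suppose $I_2\subset I_1\subset I_0$ is a modular triple with data $(\beta,\alpha)$. By Lemma~\ref{lem.min-roots}, write $\beta=\epsilon_i-\epsilon_k$ where $k=h_0(i)+1$ and $h_0(i)<h_0(i+1)$. In the type $A_{n-1}$ root system the condition $\langle \beta,\alpha^\vee\rangle=-1$ forces $\alpha\in\{\alpha_{i-1},\alpha_k\}$ (each when the index lies in $\{1,\ldots,n-1\}$). If $\alpha=\alpha_k$, unwinding $I_1=I_0\setminus\{\fg_\beta\}$ and $I_2=I_1\setminus\{\fg_{\alpha+\beta}\}$ via~\eqref{eqn.Hess-function} gives $h_1(j)=h_0(j)$ for $j\neq i$, $h_1(i)=k$, and $h_2(i)=k+1$, matching case (1) of Definition~\ref{def.modular-typeA}; the required inequalities and the equation $h_1(h_1(i))=h_1(h_1(i)+1)$ follow from minimality of $\beta$, $h_0^{-1}(k)=\emptyset$, and $h_0(k)=h_0(k+1)$ via Lemma~\ref{lem.reflection-action}. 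If instead $\alpha=\alpha_{i-1}$, the analogous unwinding (with the role of $i$ shifted to $i-1$) matches case (2).

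The main obstacle is keeping the indices straight and confirming that the two cases of Definition~\ref{def.modular-typeA} exhaust precisely the two possible choices of $\alpha\in\Delta$ with $\langle \beta,\alpha^\vee\rangle=-1$ for a fixed minimal $\beta$. Edge cases where $i=1$ (so $\alpha_{i-1}$ does not exist) or $k=n$ (so $\alpha_k$ does not exist) automatically restrict to the appropriate single case, consistent with the convention that $h(0)=0$ and $h(n)=n$ that is implicit in Definition~\ref{def.modular-typeA}.
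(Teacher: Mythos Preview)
Your proof is correct and follows essentially the same approach as the paper's: both directions use Lemmas~\ref{lem.min-roots} and~\ref{lem.reflection-action} to translate between the root-theoretic data $(\beta,\alpha)$ and the Hessenberg-function data, and the two combinatorial cases correspond exactly to the two possible simple roots $\alpha$ with $\langle\beta,\alpha^\vee\rangle=-1$ (the paper parametrizes by fixing $\alpha=\alpha_k$ and letting $\beta$ vary, you fix $\beta$ and let $\alpha\in\{\alpha_{i-1},\alpha_k\}$, but this is the same dichotomy). One small imprecision: in forward case~(1), the hypotheses $h_1(i-1)<h_1(i)$ and $h_1(i)\geq i$ that you cite do not by themselves force $k>i$; the clean argument is that $h_0(i)=k-1\geq i$ since $h_0$ must be a Hessenberg function (or, as the paper does, use $h_1(h_1(i))=h_1(h_1(i)+1)$ together with $h_1(i)<h_1(i+1)$ to rule out $h_1(i)=i$).
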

\begin{proof} We first prove that any combinatorial triple corresponds to a
	modular triple of ideals in $\ideals$.  Let $h_0, h_1, h_2$ be a triple of Hessenberg functions satisfying condition (1) of Definition~\ref{def.modular-typeA}. We must have $h_1(i)>i$ in this case.  Indeed, $h_1(i)\geq i$ and if $h_1(i)=i$ then $h_1(h_1(i)) = i$ and $h_1(h_1(i)+1)= h_1(i+1)\geq i+1$, violating the fact that $h_1(h_1(i)) = h_1(h_1(i)+1)$.  We may therefore assume $i<h_1(i)<n$.  The formula for $h_0$ in~\eqref{eqn.h0h2cond1} now yields
	%and assumption that $h_1(h_1(i)) = h_1(h_1(i)+1)$ now imply
\begin{eqnarray}\label{eqn.h0eq}
h_0(h_1(i)) = h_1(h_1(i)) = h_1(h_1(i)+1) = h_0(h_1(i)+1).
\end{eqnarray}
Similarly, if $j<i$ then $h_0(j) = h_1(j) \leq h_1(i-1)<h_1(i)$ and if $j>i$ then $h_0(j)=h_1(j)\geq h_1(i+1)>h_1(i)$. As $h_0(i) =h_1(i)-1 \neq h_1(i)$, this proves $h_0^{-1}(h_1(i)) = \emptyset$.  Together with~\eqref{eqn.h0eq}, this gives us $s_{h_1(i)}(I_{h_0}) = I_{h_0}$ by Lemma~\ref{lem.reflection-action}.  Set $\beta = \epsilon_i-\epsilon_{h_1(i)}$. Then $\langle \beta, \alpha_{h_1(i)}^\vee \rangle = -1$.  As $h_1(i) = h_0(i)+1$ and 
\[
h_0(i) = h_1(i)-1 < h_1(i+1) = h_0(i+1),
\]
we have that $\beta$ is a minimal root of $I_0$ by Lemma~\ref{lem.min-roots}. 
The formulas for $h_0$ and $h_2$ given in~\eqref{eqn.h0h2cond1} imply $I_0 = I_1\oplus \fg_{\beta}$ and $I_1=I_2\oplus \fg_{\alpha_{h_1(i)}+\beta}$. This proves $I_2\subset I_1\subset I_0$ is a modular triple.

%Let $\alpha= \epsilon_{h_1(i)}-\epsilon_{h_1(i)+1}$ so $s_\alpha$ the simple reflection swapping $h_1(i)$ and $h_1(i)+1$.  We first argue that $h_0$ is $P_\alpha$-invariant, which in turn implies that the corresponding ad-nilpotent ideal $I_0$ is also $P_\alpha$-invariant.  The assumptions of condition (1) imply that $h_0(i+1) = h_1(i+1)>h_1(i)$ and $h_0(i) = h_1(i)-1 < h_1(i)$ and therefore $h^*_0(h_1(i))  = i+1= h^*_0(h_1(i)+1)$.  Since $h_1(i)>i$ we also have $h_0(h_1(i)) = h_1(h_1(i))$ and $h_0(h_1(i)+1) = h_1(h_1(i)+1)$ and thus
%\[
%h_1(h_1(i)) = h_1(h_1(i)+1) \Rightarrow h_0(h_1(i)) = h_0(h_1(i)+1).
%\]
%The desired $P_\alpha$-invariance of $h_0$ now follows from Lemma~\ref{lem.reflection-action}. Set $\beta = \epsilon_i-\epsilon_{h_1(i)}$.  Then $\left< \beta, \alpha^\vee \right> = -1$.  The formulas for $h_0$ and $h_2$ given in~\eqref{eqn.h0h2cond1} imply $I_0 = I_1\oplus \fg_{\beta}$ and $I_1=I_2\oplus \fg_{\alpha+\beta}$. This proves $I_2\subset I_1\subset I_0$ is a modular triple, as desired.  

Now suppose $h_0, h_1, h_2$ satisfy condition (2) of Definition~\ref{def.modular-typeA}. The formula for $h_0$ in~\eqref{eqn.h0h2cond2} gives us $h_1^{-1}(i)=\emptyset \Rightarrow h_0^{-1}(i)=\emptyset$ and $h_0(i)=h_1(i) = h_0(i+1)$.  Therefore $s_i(\Phi_{I_0}) = \Phi_{I_0}$ by Lemma~\ref{lem.reflection-action}. The assumption $h_1(i+1)=h_1(i)+1$ yields
\[
h_0(i+1)+1 = h_1(i)+1=h_1(i+1) 
\]
and 
\[
h_0(i+1) =h_1(i) < h_1(i+1) \leq h_1(i+2) = h_0(i+1)
\]
so $\epsilon_{i+1}-\epsilon_{h_1(i+1)}$ is a minimal root of $\Phi_{I_0}$ by Lemma~\ref{lem.min-roots}.  Setting $\beta=\epsilon_{i+1}-\epsilon_{h_1(i+1)}$ we get  $\left< \beta, \alpha_{i}^\vee \right> = -1$,  $I_0 = I_1\oplus \fg_{\beta}$, and $I_1=I_2\oplus \fg_{\alpha_i+\beta}$.  This proves $I_2\subset I_1\subset I_0$ is a modular triple.

Next we argue that any modular triple of ideals $I_2\subset I_1 \subset I_0$ corresponds to a combinatorial triple $h_0, h_1 ,  h_2$.  There exists $\alpha_k \in \Delta$ and $\beta\in \Phi^+$ such that $\left< \beta, \alpha_k^\vee \right> = -1$, $I_0 = I_1\oplus \fg_\beta$, and $s_k(\Phi_{I_0}) = \Phi_{I_0}$.  Furthermore, by definition $I_2$ is the ideal defined by the condition $I_1 = I_2\oplus \fg_{\alpha+\beta}$. Since $\Phi$ is a type A root system, $\beta=\epsilon_i-\epsilon_k$ for some $i<k$ or $\beta=  \epsilon_{k+1}-\epsilon_p$ for some $p>k+1$. We consider each case.

%Since we are in the type A setting, we may write $\alpha= \epsilon_t - \epsilon_{t+1}$ for some $t\in [n-1]$.  The condition that $\left< \beta, \alpha^\vee\right> =-1$ implies $\beta = \epsilon_k - \epsilon_t$ for some $k<t$ or $\beta = \epsilon_{t+1} -\epsilon_p$ for some $p>t+1$.  

Suppose first that $\beta = \epsilon_i - \epsilon_k$ for some $i<k$.  We argue that the triple of Hessenberg functions $h_0 , h_1 , h_2$ satisfies condition (1) of Definition~\ref{def.modular-typeA}.
  As $\alpha +\beta= \epsilon_{i}-\epsilon_{k+1}$ is a minimal root of $I_1$, we have $h_1(i)=k$ and $h_1(i)<h_1(i+1)$ by Lemma~\ref{lem.min-roots}.  Since $I_0 = I_1\oplus \fg_\beta$,  
\begin{eqnarray}\label{eqn.h0}
h_0(j) = h_1(j) \; \textup{ for all } \; j\neq i \; \textup{ and } h_0(i) = h_1(i)-1.
\end{eqnarray}
If $h_1(i-1)=h_1(i)=k$ then $h_0(i-1) = k$ by~\eqref{eqn.h0}, contradicting, by Lemma~\ref{lem.reflection-action}, the assumption that $s_k(\Phi_{I_0}) = \Phi_{I_0}$.  Thus $h_1(i-1)<h_1(i)$.  Lemma~\ref{lem.reflection-action} also implies $h_0(k) = h_0(k+1)$ and since $i<k$, \eqref{eqn.h0} now yields $h_1(k) = h_1(k+1) \Rightarrow h_1(h_1(i)) = h_1(h_1(i)+1)$.
Finally, as $I_2$ satisfies $I_1=I_2\oplus \fg_{\alpha+\beta}$, the Hessenberg function $h_2$ is defined as in~\eqref{eqn.h0h2cond1}, concluding this case.

Suppose $\beta = \epsilon_{k+1}-\epsilon_p$ for some $p>k+1$.  We argue that the triple of Hessenberg functions $h_0 , h_1 , h_2$ satisfies condition (2) of Definition~\ref{def.modular-typeA}.
Lemma~\ref{lem.min-roots} implies that $p =h_0(k+1)+1$ since $\beta$ is a minimal root of $\Phi_{h_0}$. Now that fact that $I_0=I_1\oplus \fg_\beta$ implies 
\begin{eqnarray}\label{eqn.h1}
h_1(k+1) = p = h_0(k+1)+1 \; \textup{ and } \; h_1(j)=h_0(j) \; \textup{ for all }\;  j\neq k+1.
\end{eqnarray}
Lemma~\ref{lem.reflection-action} and~\eqref{eqn.h1} together imply that $h_1(k+1) = h_0(k)+1 = h_1(k)+1$ and $h_1^{-1}(k) = \emptyset$. This proves $h_1$ and $h_0$ are as in Definition~\ref{def.modular-typeA} with $i=k$.  The fact that $I_2$ satisfies $I_1=I_2\oplus \fg_{\alpha+\beta}$ where $\alpha+\beta = \epsilon_k - \epsilon_{p}$ implies that $h_2$ is defined as in~\eqref{eqn.h0h2cond2} with $i=k$.  The proof is now complete.
\end{proof}

Let $\Lambda^n$ denote the $\Z$-module of homogeneous symmetric functions of degree $n$.   
The Schur functions $\{s_\lambda \mid \lambda\vdash n\}$ are a basis of $\Lambda^n$.  Here, 
$\lambda\vdash n$ means $\lambda = (\lambda_1 \geq \lambda_2 \geq \cdots \geq \lambda_\ell>0)$ is 
a partition of $n$.

We say the function $F: \ch \to \Q(q)\otimes \Lambda^n$ satisfies the \textbf{combinatorial modular law} if 
\begin{eqnarray}\label{eq.modular-typeA}
(1+q)\,F(h_1) = F(h_2) + qF(h_0)
\end{eqnarray}
whenever  $h_0,h_1, h_2 \in \ch$ is a combinatorial triple of Hessenberg functions. 
With Lemma~\ref{lemma.triples} in hand, we recover the combinatorial modular law as a special case of the modular law from
Definition~\ref{def:modular_law_general}.  

Let $F:\ch \to \Q(q)\otimes \Lambda^n$.  Given $I \in \ideals$ and $\lambda\vdash n$, define $F_\lambda^{I}\in  \Q(q)$ by 
\[
F(h) = \sum_{\lambda\vdash n} F_\lambda^{I}(q) s_\lambda
\]
where $h \in \ch$ is the unique Hessenberg function with $I=I_h$.

\begin{prop}\label{prop.comb-to-geometric} The  function $F: \ch \to \Q(q)\otimes \Lambda^n$ satisfies the combinatorial modular law~\eqref{eq.modular-typeA} if and only if $\{F_\lambda^I\}_{I \in \ideals}$ satisfies~\eqref{second_kind} for all $\lambda \vdash n$.
\end{prop}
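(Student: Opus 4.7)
The proof should be essentially a direct unpacking of Lemma~\ref{lemma.triples} together with the fact that the Schur functions form a $\Z$-basis of $\Lambda^n$.

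The plan is the following. First I would recall that for $h \in \ch$ with corresponding ideal $I = I_h$, the symmetric function $F(h)$ has the expansion $F(h) = \sum_{\lambda \vdash n} F_\lambda^I(q)\, s_\lambda$ by definition of the coefficients $F_\lambda^I$. Since $\{s_\lambda : \lambda \vdash n\}$ is a $\Q(q)$-basis of $\Q(q)\otimes \Lambda^n$, the identity
\[
(1+q)\,F(h_1) = F(h_2) + q\,F(h_0)
\]
holds in $\Q(q)\otimes \Lambda^n$ if and only if
\[
(1+q)\, F_\lambda^{I_1} = F_\lambda^{I_2} + q\, F_\lambda^{I_0}
\]
holds in $\Q(q)$ for every $\lambda \vdash n$, where $I_j := I_{h_j}$.

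Next I would invoke Lemma~\ref{lemma.triples}, which provides a bijection between combinatorial triples $(h_0,h_1,h_2)$ of Hessenberg functions (in the sense of Definition~\ref{def.modular-typeA}) and modular triples $I_2 \subset I_1 \subset I_0$ of ideals in $\ideals$ (in the sense of Section~\ref{sec.modular_law}), where in both conditions of Definition~\ref{def.modular-typeA} the smaller Hessenberg function corresponds to the larger ideal, so $I_{h_0} = I_0 \supset I_1 = I_{h_1} \supset I_2 = I_{h_2}$. Under this bijection, the combinatorial modular law applied to the triple $(h_0,h_1,h_2)$ is converted precisely into the modular law \eqref{second_kind} applied to the triple $I_2 \subset I_1 \subset I_0$, for each fixed $\lambda$.

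Combining these two observations gives both directions simultaneously: if $F$ satisfies the combinatorial modular law, then for any modular triple of ideals we pull back to the corresponding combinatorial triple, apply \eqref{eq.modular-typeA}, and extract the coefficient of $s_\lambda$ to deduce \eqref{second_kind} for $\{F_\lambda^I\}$; conversely, if each $\{F_\lambda^I\}$ satisfies \eqref{second_kind}, then given any combinatorial triple we pass to the corresponding modular triple of ideals, apply \eqref{second_kind} for each $\lambda$, and sum $s_\lambda$ times these identities to recover \eqref{eq.modular-typeA}. Since the bijection of Lemma~\ref{lemma.triples} is exhaustive on both sides, no combinatorial triple or modular triple is missed. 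There is no real obstacle here; the entire content lies in Lemma~\ref{lemma.triples}, which has already been established, and the argument is essentially a book-keeping exercise in the basis $\{s_\lambda\}$.
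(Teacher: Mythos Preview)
Your proposal is correct and follows essentially the same approach as the paper's own proof: invoke Lemma~\ref{lemma.triples} to identify combinatorial triples with modular triples, and use that the Schur functions $\{s_\lambda\}$ form a basis of $\Lambda^n$ to pass between the single identity in $\Q(q)\otimes\Lambda^n$ and the family of coefficient identities in $\Q(q)$. Your version is somewhat more detailed (in particular spelling out the direction of the containment $I_{h_0}\supset I_{h_1}\supset I_{h_2}$), but the argument is the same.
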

\begin{proof} By Lemma~\ref{lemma.triples}, $h_0, h_1, h_2\in \ch$ is a combinatorial triple if and only if $I_2\subset I_1\subset I_0$ is a modular triple in $\ideals$.  Since the $s_\lambda$ form a basis of $\Lambda^n$, it follows that
$$(1+q)\,F(h_1) = q\,F(h_0) + F(h_2)$$ 
if and only if 
$$(1+q)\,F_\lambda^{I_1} = qF_\lambda^{I_0} + F_\lambda^{I_2}$$ 
for all $\lambda\vdash n$. \end{proof}

\subsection{Chromatic quasisymmetric functions and LLT polynomials}

When working in the context of chromatic and LLT polynomials, a Hessenberg function (or Dyck path) is frequently identified with an indifference graph (see \cite[\S 2.3]{Harada-Precup2019}).  
For a composition $\mu = (\mu_1, \dots, \mu_r)$ of $n$,  let $h^{(\mu)}$ denote the Hessenberg function
with $h(i) = \mu_1 + \dots  + \mu_k$ where $k$ is the smallest index satisfying $i \leq \mu_1 + \dots  + \mu_k$.
 Let $K_m$ denote the complete graph on $m$ vertices.  
Then the graph corresponding to $h^{(\mu)}$ is a disjoint union $K_{\mu_1}  \sqcup K_{\mu_2} \sqcup \dots \sqcup K_{\mu_r}$ 
of complete graphs.  

Abreu-Nigro showed the following key result in \cite[Theorem 1.2]{Abreu-Nigro2020}.
\begin{thm}[Abreu--Nigro]\label{thm.Abreu-Nigro}  Let $F: \ch \to \Q(q)\otimes \Lambda^n$ be a function satisfying the combinatorial modular law of Equation~\eqref{eq.modular-typeA}. Then $F$ is determined by its values $F(h^{(\mu)})$ where $\mu$ is a composition of $n$.
\end{thm}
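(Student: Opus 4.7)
The plan is to proceed by strong induction on a statistic measuring how far a Hessenberg function is from being parabolic.  Recall that $h = h^{(\mu)}$ for some composition $\mu$ precisely when $h(h(i)) = h(i)$ for every $i$, equivalently when $h$ decomposes into staircase blocks whose lengths are the parts of $\mu$.  A natural first candidate is
$$\nu(h) = \sum_{i=1}^n \bigl(h(h(i)) - h(i)\bigr),$$
a non-negative integer vanishing exactly on the parabolic Hessenberg functions.

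For any non-parabolic $h$, I would locate an index $i$ at which one of the two conditions in Definition~\ref{def.modular-typeA} is satisfied, so that $h$ appears as one of the three members of a combinatorial triple $(h_0, h_1, h_2)$.  A reasonable first attempt is to take $i$ to be the smallest index with $h(h(i)) > h(i)$, perhaps together with a secondary choice among conditions (1) and (2) depending on whether $h(i) = h(i-1)$ and whether $h(i+1) = h(i)+1$.  The modular relation $(1+q)\,F(h_1) = F(h_2) + q\,F(h_0)$, since $(1+q)$ is a unit in $\Q(q)$, can then be solved for $F(h)$ as a $\Q(q)$-linear combination of the two other $F$-values in the triple, and the inductive hypothesis will finish the argument.

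The main obstacle is termination of this reduction.  Small examples, such as $h = (3,4,4,4)$ for $n = 4$ (whose only available triple pairs it with $(2,4,4,4)$ and $(4,4,4,4)$), show that $\nu$ does not monotonically decrease across every triple: one of the two partners can have larger $\nu$ than $h$ itself, because the modular law moves the value of $h$ at the chosen index in both directions simultaneously.  To rescue the induction one needs either a more refined well-order (for example, a lexicographic pair such as the rightmost non-parabolic position together with the value $h(i)$ there), or to allow clusters of mutually-entangled Hessenberg functions to be resolved simultaneously: the chain $(1,4,4,4),(2,4,4,4),(3,4,4,4),(4,4,4,4)$ in the example above forms a finite linear system in $F$ whose coefficient matrix, built from $1$, $q$, and $1+q$, is invertible over $\Q(q)$, so the $F$-values of the non-parabolic elements are uniquely determined by those of $(1,4,4,4)$ and $(4,4,4,4)$.

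Carrying out this bookkeeping in general is the technical heart of the proof.  Concretely, I would partition $\ch$ into equivalence classes generated by the triples produced from the chosen statistic, show that each class contains at least one parabolic function and only finitely many non-parabolic ones, and then verify that the induced system of modular relations within each class has a coefficient matrix over $\Q(q)$ that is nonsingular.  With that in hand, the values $F(h^{(\mu)})$ for $\mu$ a composition of $n$ determine $F$ on all of $\ch$, which is exactly the content of the theorem.
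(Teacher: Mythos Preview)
The paper does not give a proof of this theorem; it is quoted from \cite[Theorem~1.2]{Abreu-Nigro2020} and used as a black box in the argument leading to Corollary~\ref{cor.typeA-characters}. So there is no in-paper proof to compare against.

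That said, your proposal is an outline rather than a proof, and you have already put your finger on the gap: you never establish termination. You correctly observe that the naive statistic $\nu(h)=\sum_i\bigl(h(h(i))-h(i)\bigr)$ fails to decrease monotonically across a triple, and you then offer two possible rescues---a refined well-order, or a linear-algebra argument on finite ``equivalence classes''---but you carry out neither. For the well-order approach you never specify the order or verify that at least one partner in every triple is strictly smaller; for the linear-system approach you do not define the equivalence relation, do not show the classes are finite, do not show each class contains a parabolic element, and do not prove the resulting coefficient matrix over $\Q(q)$ is nonsingular. Your $n=4$ example happens to produce determinant $1+q+q^2$, but nothing in what you wrote explains why such determinants never vanish in general.

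In the original Abreu--Nigro argument the content is precisely a specific reduction procedure together with a termination measure that provably decreases; locating that measure is exactly the work you have deferred to ``bookkeeping.'' As it stands, what you have is a reasonable plan of attack, not a proof.
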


Let $\mathcal{R}_n$ denote the representation ring of $S_n$ and recall that the Frobenius characteristic map defines an isomorphism $\mathcal{R} = \oplus_n \mathcal{R}_n \to \Lambda=\oplus_n \Lambda^n$ (see~\cite[Section 7.3]{Fulton-YT}).  Given a graded complex vector space $U=\oplus_i U_{2i}$ concentrated in even degree such that each $U_{2i}$ is a finite dimensional $S_n$-representation, we let $\Ch(U) = \sum_i [U_{2i}]q^i$ where $[U_{2i}]$ denotes the class of $U_{2i}$ in~$\mathcal{R}_n$.

If $\mu$ is a composition of $n$ and $I\in \ideals$ is the ideal corresponding $h^{(\mu)}$, then   
$H = I^\perp \in \ch$ is a parabolic subalgebra $\fp_\mu$ of a parabolic subgroup $P_\mu$ in $G$
with $W_{P_\mu} \simeq S_{\mu_1} \times S_{\mu_2} \times \dots \times S_{\mu_r}$.
Since we know the values of the dot action and LLT representations at all $\fp_\mu$ by  Proposition~\ref{cor.LLT-parabolic}, we can use Theorem~\ref{thm.Abreu-Nigro} to identify the graded characters of the dot action and LLT representations as symmetric functions under the Frobenius characteristic map.  In the former case, we obtain another proof of the Shareshian--Wachs conjecture, originally proved by Brosnan and Chow in~\cite{Brosnan-Chow2018} and again using independent methods by Guay-Paquet in~\cite{Guay-Paquet2016}).  We note that the proof below is not wholly independent of that of Brosnan and Chow, since our computations in the previous section rely on their result \cite{Brosnan-Chow2018} identifying the dot action as a monodromy action. 

Rather than define the chromatic quasisymmetric and unicellular LLT functions corresponding to a given Hessenberg function $h$ in careful detail here, we refer the interested reader to~\cite{Shareshian-Wachs2016, Abreu-Nigro2020, Abreu-Nigro2, Alexandersson-Panova2018}.

\begin{cor} \label{cor.typeA-characters} Let $h\in \ch$ and let $H = I_h^\perp$.
\begin{enumerate}
\item The image of $\Ch(H^*(\hess{s}{H}) \otimes \mbox{sgn})$ under the Frobenius characteristic map is equal to the chromatic quasisymmetric function of $h$. 
\item The image of $\Ch(\LLT_{H})$ under the Frobenius characteristic map is equal to the unicellular LLT polynomial of $h$.
\end{enumerate}
\end{cor}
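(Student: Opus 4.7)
The plan is to invoke the Abreu--Nigro uniqueness theorem (Theorem \ref{thm.Abreu-Nigro}): a function $\ch \to \Q(q)\otimes \Lambda^n$ that satisfies the combinatorial modular law \eqref{eq.modular-typeA} is determined by its values on the parabolic Hessenberg functions $h^{(\mu)}$. I would verify that both sides of (1) and (2) satisfy \eqref{eq.modular-typeA} and agree on every $h^{(\mu)}$.

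First, I would establish the modular law for the geometric sides. In type $A$, the irreducible representations of $W=S_n$ are indexed by partitions $\lambda \vdash n$, so the Frobenius characteristic of $H^*(\hess{s}{H})$ is $\sum_\lambda \mathsf{P}(\hess{s}{I_h^\perp};\chi_\lambda)\, s_\lambda$. By Proposition \ref{mod_laws}(4) each coefficient polynomial satisfies the modular law of \eqref{second_kind} in $I$, hence by Proposition \ref{prop.comb-to-geometric} the whole symmetric function satisfies \eqref{eq.modular-typeA} in $h$. Tensoring with $\mathrm{sgn}$ acts on characters by $\chi_\lambda \mapsto \chi_{\lambda'}$, equivalently by $s_\lambda \mapsto s_{\lambda'}$, which is a $\Q(q)$-linear involution and therefore preserves \eqref{eq.modular-typeA}. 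The argument for (2) is identical using Proposition \ref{mod_laws}(3) applied to the polynomials $g^{H^\perp}_\varphi$, which by definition give the graded multiplicities of $\varphi$ in $\LLT_H$.

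Second, I would record that the combinatorial sides also satisfy \eqref{eq.modular-typeA}: for the chromatic quasisymmetric function this is Guay-Paquet's original modular law \cite{Guay-Paquet2013}, and for the unicellular LLT polynomials it is established in \cite{Abreu-Nigro2020} (see also \cite{Guay-Paquet2016}). By linearity, the differences
\[
F_1(h) \;=\; \mathrm{Ch}\bigl(H^*(\hess{s}{I_h^\perp})\otimes \mathrm{sgn}\bigr) - X_h(q), \qquad F_2(h) \;=\; \mathrm{Ch}(\LLT_{I_h^\perp}) - \mathrm{LLT}_h(q),
\]
both satisfy the combinatorial modular law.

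Third, I would check the base case, $h = h^{(\mu)}$, corresponding to $H = \fp_\mu$ the standard parabolic with Levi $W_{P_\mu} = S_{\mu_1} \times \cdots \times S_{\mu_r}$. Proposition \ref{cor.LLT-parabolic}(1) gives
\[
\mathrm{Ch}\bigl(H^*(\hess{s}{\fp_\mu})\otimes \mathrm{sgn}\bigr) \;=\; \mathsf{P}(P_\mu/B)\cdot \mathrm{Ch}\bigl(\mathrm{Ind}_{W_{P_\mu}}^W(\mathrm{sgn})\bigr) \;=\; \prod_i [\mu_i]_q!\; e_{\mu_i},
\]
which is precisely the chromatic quasisymmetric function of the disjoint union of cliques $K_{\mu_1}\sqcup\cdots\sqcup K_{\mu_r}$ by Shareshian--Wachs \cite{Shareshian-Wachs2016}. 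Similarly, Proposition \ref{cor.LLT-parabolic}(2) expresses $\mathrm{Ch}(\LLT_{\fp_\mu})$ as the Frobenius of $\mathrm{Ind}_{W_{P_\mu}}^W(\cc_{P_\mu})$, and since $\cc_{P_\mu}$ is an outer tensor product of the graded coinvariant algebras $\cc_{S_{\mu_i}}$, its Frobenius factors as a product of the known generating functions for each $\cc_{S_{\mu_i}}$, matching the product form of the unicellular LLT polynomial on $h^{(\mu)}$. Thus $F_1(h^{(\mu)}) = F_2(h^{(\mu)}) = 0$ for every composition $\mu$, and Theorem \ref{thm.Abreu-Nigro} forces $F_1 \equiv F_2 \equiv 0$.

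The main obstacle will be the bookkeeping in the base case: correctly matching the $q$-grading (cohomological degree versus the doubled convention in $\mathsf{P}$), tracking the sign twist so that induction of $\mathrm{sgn}$ yields $e_\mu$ (rather than $h_\mu$), and verifying the explicit factorization of the unicellular LLT polynomial at disjoint unions of complete graphs in a form compatible with $\mathrm{Ch}(\mathrm{Ind}_{W_{P_\mu}}^W \cc_{P_\mu})$. These are classical identities in the theory of Hall--Littlewood and elementary symmetric functions, so once the conventions are pinned down the comparison at each $h^{(\mu)}$ is routine, and Abreu--Nigro's uniqueness then delivers the corollary for all $h \in \ch$.
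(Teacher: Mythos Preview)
Your approach is essentially the paper's own: establish the combinatorial modular law for the geometric characters via Proposition~\ref{mod_laws} and Proposition~\ref{prop.comb-to-geometric}, then invoke Theorem~\ref{thm.Abreu-Nigro} and check the parabolic base case using Proposition~\ref{cor.LLT-parabolic}. One correction: Proposition~\ref{mod_laws}(4) concerns $\mathsf{P}(\hess{x}{I^\perp};\chi)$ for \emph{nilpotent} $x\in\cn$ and $(x,\chi)\in\sppairs$, not the regular semisimple multiplicities $\mathsf{P}(\hess{s}{H};\varphi)$; the modular law you need there comes instead from part~(2) via the identity $\mathsf{P}(\hess{s}{H};\varphi)=f^{H^\perp}_{\varphi\otimes\mathrm{sgn}}$ of Proposition~\ref{ss_hessy} (and your $g^{H^\perp}_\varphi$ should read $g^{H}_\varphi=g^{I^\perp}_\varphi$, matching part~(3)).
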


\begin{proof} 
Proposition~\ref{prop.comb-to-geometric} and  Proposition~\ref{mod_laws} imply that the Frobenius characteristic map of $\Ch(H^*(\hess{s}{H})\otimes \mbox{sgn})$ and $\Ch(\LLT_{H})$ in $\Q(q)\otimes \Lambda^n$ both satisfy the combinatorial modular law. 
It therefore suffices by Theorem~\ref{thm.Abreu-Nigro} to show that $\Ch(H^*(\hess{s}{\fp_\mu})\otimes \mbox{sgn})$ is equal to the chromatic quasisymmetric function of $h^{(\mu)}$ and $\Ch(\LLT_{\fp_\mu})$ is equal to the unicellular LLT polynomial of $h^{(\mu)}$ under the Frobenius characteristic map,  for each composition  $\mu$ of $n$.  

By Proposition~\ref{cor.LLT-parabolic}(1) 
$$\Ch(H^*(\hess{s}{\fp_\mu})\otimes \mbox{sgn}) =   \mathsf P(P_\mu/B) \Ch(\Ind^{S_n}_{S_{\mu_1} \times \dots \times S_{\mu_r}}(\mbox{sgn})).$$ 
The Frobenius characteristic of $\Ind^{S_n}_{S_{\mu_1} \times \dots \times S_{\mu_r}}(\mbox{sgn})$ is the elementary symmetric polynomial $e_\mu$.  Also $P(P_\mu/B) = \prod_{i=1}^r [\mu_i]!$ where $[m]! = [m][m-1]\dots[2][1]$ and $[m]$ is the $q$-number $1\!+\!q\! +\! \dots \!+\!q^{m-1}$.  The chromatic quasisymmetric function of $h^{(\mu)}$ is $\prod_{i=1}^r [\mu_i]! e_\mu$, so both sides agree in the parabolic setting and the result follows.

By Proposition~\ref{cor.LLT-parabolic}(2), 
$\Ch(\LLT_{\fp_\mu})$ 
equals the graded character of 
$$\Ind^{S_n}_{S_{\mu_1} \times \dots \times S_{\mu_r}}(\cc_{\mu_1} \otimes \cc_{\mu_2} \otimes \dots \otimes \cc_{\mu_r}),$$
where $\cc_{m}$ denotes the covariant algebra of $S_m$.
Let $\mbox{Frob}(U)$ denote the Frobenius characteristic of a graded representation $U$.  Then 
$$\mbox{Frob}\left(\Ind^{S_n}_{S_{\mu_1} \times \dots \times S_{\mu_r}}(\cc_{\mu_1} \otimes \cc_{\mu_2} \dots \otimes \cc_{\mu_r})\right) = \prod_{i=1}^r \mbox{Frob}(\cc_{\mu_i}).$$
Since unicellular LLT polynomials corresponding to Hessenberg functions are multiplicative (see~\cite[Theorem 2.4]{Abreu-Nigro2}), it suffices to know that 
$\mbox{Frob}(\cc_{m})$ equals the unicellular LLT polynomials for $S_m$ for the case of the complete graph $K_m$,
which is true by, for example, \cite[Equation (30)]{Alexandersson-Panova2018}.    
\end{proof}

\section{Example in $B_3$}\label{section:example}

In type $B_3$, there are $20$ ideals in $\ideals$ and $10$ irreducible representations in $\Wr$.  
The ideals are grouped according to the nilpotent orbit $\co_I$.   
We specify $I$  by listing the minimal roots in $\Phi_I$, using the coefficients of the simple roots. 
The elements of $\sppairs$ are listed in the top row of Table~\ref{fibers_B3}.
The elements of $\Wr$, as pairs of partitions, are listed in the top row of Table~\ref{table:key_poly}.

The ideals $I$ for which $\co_I = \co_x$ and 
$$I \subset \bigoplus_{i \geq 2} \fg_i$$ 
for the grading induced by $x$ are listed with a $*$ symbol.  For such $I$, we have $I \cap \fg_2 \in \ideals_{2}^{gen}$
and all such elements of $\ideals_{2}^{gen}$ arise in this way.

\begin{table}
	\resizebox{\linewidth}{!}{
	\noindent\begin{tabular}{|c|c|c|c|c|c|c|c|c|c|c|c|}
			\hline
		%	\smallskip
			%Ideal & Min roots & $x[1]$ & $x[2]$ & $x[4]$ & $x[3]$ & $x[5]$ & $x[7]$ & $x[6]$ & $x[9]$ & $x[8]$ &\\ % $x[10]$  \\
			Ideal & Min roots&   $[1^7]$ & $[2^2,1^3]$  & $[3,1^4], \epsilon$ & $[3,1^4], 1$ & $[3,2^2]$ & $[3^2,1], \epsilon$ & $[3^2,1], 1$ & $[5,1^2]. \epsilon$ & $[5,1^2], 1$ & $[7]$ \\
			\hline
%			&&$[2][4][6]$&  $[2][2]$ &  $0$ & $[2[4]$ & $[2]$ & $0$ & $[2][2]$ & $0$ & $[2]$ & $1$ \\
%			& & & & $q[2]$ & $[2][3]$ & $$ & $0$ & $[2]$  & $1$ & $1$& $$ \\
%			& & & & $[2][2]$ & $[2][2]$ & $$ & $1$ & $1$  & $$ & $$& $$ \\
			\hline
			20* &$\emptyset$ & $[2][4][6]$ & $$ & $$ & $$ & $$ & $$ & $$ & $$ & $$ & $$  \\
			\hline
			19* & $122$ & $[2][4][6]$ & $[2][2]$ & $$ & $$ & $$ & $$ & $$ & $$ &  & \\
			18 & $112$ & $[2][4][6]$ & $[2][2][2]$ & $$ & $$ & $$ & $$ & $$ & $$ && \\
			17 & $012$ & $[2][4][6]$ & $[2][2][3]$ & $$ & $$ & $$ & $$ & $$ & $$  & &\\
			\hline
			16* & $111$ & $[2][4][6]$ & $[2][2][2]$ & $[2][2]$ & $[2][2]$ & $$ & $$ & &&&\\
			15* & $110$ & $[2][4][6]$ & $[2][2][3]$ & $q[2]$ & $[2][3]$ & $$ & $$ &  &&&\\
			14* & $100$ & $[2][4][6]$ & $[2][2][4] $ & $0$ & $[2][4]$ & $$ & $$ & &&& \\
			\hline
			13* & $111, 012$ & $[2][4][6]$ & $[2][2][3]$ & $[2][2]$ & $[2][2]$ & $[2]$ & $$ & $$ & $$ & $$ & \\
			12 & $011$ & $[2][4][6]$ & $[2][2][3]$ & $[2][2][2]$ & $[2][2][2]$ & $[2][2]$ & $$ & $$ & $$ & $$ & $$  \\
			11 & $001$ & $[2][4][6]$ & $[2][2][3]$ & $[2][2][3]$ & $[2][2][3]$ & $[2][3]$ & $$ & $$ & $$ & $$ & $$  \\
			\hline
			10* & $110,012$ & $[2][4][6]$ & $[2][2](1\!+q\!+2q^2)$ & $q[2]$ & $[2][3]$ & $[2]$ & $1$ & $1$ & $$ & &\\
			9 & $100,012$ & $[2][4][6]$ & $[2][2](1\!+\!q\!+\!2q^2\!+\!q^3)$ & $0$ & $[2][4]$ & $[2]$ & $[2]$ & $[2]$ & $$ & $$ & $$  \\
			8* & $110,011$ & $[2][4][6]$ & $[2][2](1\!+\!q\!+\!2q^2)$ & $2q\!+\!3q^2+q^3$ & $[2](1\!+\!2q\!+\!2q^2)$ & $[2](1\!+\!2q)$ & $0$ & $[2]$ & $$ & $$  &\\
			7* & $010$ & $[2][4][6]$ & $[2][2](1\!+\!q\!+\!2q^2\!+\!q^3)$ & $q[2][2]$ & $[2][2][3]$ & $[2][2][2]$ & $0$ & $[2][2]$ & $$ & $$ & $$ \\
			\hline
			6* & $100,011$ & $[2][4][6]$ & $[2][2](1\!+\!q\!+\!2q^2\!+\!q^3)$ & $q[2][2]$ & $[2][2][3]$ & $[2](1\!+\!2q)$ & $q $ & $1\!+\!2q$ & $1$ & $1$ &  \\
			5 & $110,001$ & $[2][4][6]$ & $[2][2](1\!+\!q\!+\!2q^2)$ & $q[2](2\!+\!2q\!+\!q^2)$ &   $[2](1\!+\!2q\!+\!3q^2\!+\!q^3)$ % $1\!+\!3q\!+\!5q^2\!+\!4q^3\!+\!q^4$ 
			& $[2][2][2]$ & $0$ & $[2]$ & $1$ & $1$ &   \\
			4 & $100,001$ & $[2][4][6]$ & $[2][2](1\!+\!q\!+\!2q^2\!+\!q^3)$ & $q[2][2][2]$ & $[2](1\!+\!2q\!+\!3q^2\!+\!2q^3)$ & $[2](1\!+\!2q\!+\!2q^2)$ & $0$ & $[2][2]$ & $[2]$ & $[2]$ & \\
			3 & $010,001$ & $[2][4][6]$ & $[2][2](1\!+\!q\!+\!2q^2\!+\!q^3)$ & $q[2][2][2]$ & $[2](1\!+\!2q\!+\!3q^2\!+\!2q^3)$ & $[2](1\!+\!2q\!+\!2q^2)$ & $0$ & $[2][2]$ & $[2]$ & $[2]$ &  \\
			2* & $100,010$ & $[2][4][6]$ & $[2][2](1\!+\!q\!+\!2q^2\!+\!2q^3)$ & $q^2[2]$ & $[2](1\!+\!2q\!+\!2q^2\!+\!2q^3)$ & $[2][2][2]$ & $q[2]$ & $1\!+\!3q\!+\!2q^2$ & $0$ & $[2]$ &  \\
			\hline
			1* & $\Delta$ & $[2][4][6]$ & $[2][2](1\!+\!q\!+\!2q^2\!+\!2q^3)$ & $2q^2\!+\!3q^3\!+\!q^4$ & $1\!+\!3q\!+\!5q^2\!+\!6q^3\!+\!3q^4$ & $1\!+\!3q\!+\!5q^2\!+\!3q^3$ & $q^2$ & $1\!+\!3q\!+\!3q^2$ & $2q$ & $1\!+\!3q$ & $1$ \\			
			\hline
		\end{tabular}}
	\caption{The polynomials $\mathsf P(\hess{x}{I};\chi)$ for $B_3$} 
		\label{fibers_B3}
\end{table}

\begin{table}
	\resizebox{\linewidth}{!}{
\begin{tabular}{|c|c|c|c|c|c|c|c|c|c|c|c|}
			
			\hline
			
			Ideal & Min roots & $\emptyset, [1^3]$ & $\emptyset, [2,1]$ & $[1^3], \emptyset$  & $[1], [1^2]$ & $[1^2], [1]$ & $\emptyset, [3]$ & $[1],[2]$ &  $[2,1],\emptyset$ & $[2],[1]$ & $[3], \emptyset$  \\
			\hline
			20 & $\emptyset$ &  $[2][4][6]$ & $$ & $$ & $$ & $$ & $$ & $$ & $$ & $$ & $$  \\
			\hline
			19 & $122$ & $[2][4][5]$ & $q^3[2][2]$ & $$ & $$ & $$ & $$ & $$ & $$ &  & \\
			18 & $112$ & $[2][4][4]$ & $q^2[2][2][2]$ & $$ & $$ & $$ & $$ & $$ & $$ && \\
			17 & $012$ & $[2][3][4]$ & $q[2][2][3]$ &  &  & $$ & $$ & $$ & $$  && \\
			\hline
			16 & $111$ &  $[2][3][4]$ & $q^2[2][2]$ & $q^2[2][2]$ & $q^2[2][2]$ & $$ & $$  && &&\\
			15 & $110$ &  $[2][2][4]$ & $q[2][3]$ & $q^2[2]$ & $q[2][3]$ & $$ & $$ &  && &\\
			14 & $100$ &$[2][4]$ & $[2][4]$ & $0$ & $[2][4]$ & $$ & $$ & &&  &\\
			\hline
			13 & $111, 012$ &  $[2][3][3]$ & $q[2][2][2]$ & $q^2[2]$ & $q^2[2]$ & $q^2[2]$ & $$ & $$ & $$ &  &  \\
			12 & $011$ & $[2][2][3]$ & $q[2][2]$ & $q[2][2]$ & $q[2][2]$ & $q[2][2]$ & $$ & $$ & $$ & $$ & $$  \\
			11 & $001$ &  $[2][3]$ & $0$ & $[2][3]$ & $[2][3]$ & $[2][3]$ & $$ & $$ & $$ & $$ & $$  \\
			\hline
			10 & $110,012$ & $[2][2][3]$ & $2q[2][2]$ & $q^2$ & $q[2][2]$ & $q^2$ & $q^2$ & $q^2$  &  &  &\\
			9 & $100,012$ & $[2][3]$ & $[2][2][2]$ & $0$ & $[2][3]$ & $0$ & $q[2]$ & $q[2]$ & $$ & $$ & $$  \\
			8 & $110,011$ & $[2][2][2]$ & $2q[2]$ & $q[2]$ & $2q[2]$ & $2q[2]$ & $0$ & $q[2]$ & $$ &  & \\
			7 & $010$ & $[2][2]$ & $[2][2]$ & $0$ & $[2][2]$ & $[2][2]$ & $0$ & $[2][2]$ & $$ & $$ &  \\
			\hline
			6 & $100,011$ & $[2][2]$ & $1\!+\!3q\!+\!q^2$ & $q$ & $1\!+\!3q\!+\!q^2$ & $2q$ & $q$ & $2q$ & $q$ & $q$ &   \\
			5 & $110,001$ & $[2][2]$ & $q$ & $[2][2]$ & $1\!+\!3q\!+\!q^2$ & $1\!+\!3q\!+\!q^2$ & $0$ & $q$ & $q$ & $q$ & \\
			4 & $100,001$ &  $[2]$ & $[2]$ & $[2]$ & $2[2]$ & $2[2]$ & $0$ & $[2]$ & $[2]$ & $[2]$  & \\
			3 & $010,001$ &  $[2]$ & $[2]$ & $[2]$ & $2[2]$ & $2[2]$ & $0$ & $[2]$ & $[2]$ & $[2]$ & \\
			2 & $100,010$ & $[2]$ & $2[2]$ & $0$ & $2[2]$ & $[2]$ & $[2]$ & $2[2]$ & $0$ & $[2]$ &  \\
			\hline
			1& $\Delta$ & $1$ & $2$ & $1$ & $3$ & $3$ & $1$ & $2$ & $2$ & $3$ & $1$ \\
			\hline
		\end{tabular}}
\caption{The polynomials $f^I_\varphi$ for $B_3$} 	\label{table:key_poly}	
\end{table}

There are $10$ examples of the modular law in type $B_3$, which we list using the numbering of the ideals in the tables:
\begin{eqnarray}\label{eqn:B3triples}
&&(3,5,11), (4,5,11 ),(4,6,9 ),  (7,8,12 ), (9,10,13 ), 	(11,12,13),\\ 
\nonumber &&\quad\quad(12,13,17),    (14,15,16),(17,18,19),(18,19,20).
\end{eqnarray}
The modular triples~\eqref{eqn:B3triples} provide a check on our calculation in both tables, as do the $8$ cases in the parabolic setting in Table \ref{table:key_poly}
using \eqref{parabolic_case}.  
We made use of Binegar's tables to check our calculation of the Green polynomial matrix ${\bf K}$
\cite{Binegar2022}.

\appendix

\section{Definition of the dot action and LLT representations}\label{appendix}
This appendix introduces the dot action on the equivariant cohomology of a regular semisimple Hessenberg variety and uses the construction to  define the dot action and LLT representations.  Our main goal is to obtain a proof of Proposition~\ref{prop.LLT} above, which was used to define and study the LLT representations. 
 
Let $H\in \ch$ be a Hessenberg space and $s\in \ft$ a regular semisimple element.  Recall that the torus $T$ acts on regular semisimple Hessenberg variety $\hess{s}{H}$ by left multiplication.
The variety is in fact equivariantly formal with respect to this action~\cite{Tymoczko2005}.  Applying the theory developed by Goresky, Kottwitz, and MacPherson  \cite{GKM1998}, the equivariant cohomology $H_T^*(\hess{s}{H})$ has the following description.  
The inclusion map of the $T$-fixed point $\hess{s}{H,T} = \{ wB \mid w\in W\}$ into $\hess{s}{H}$ induces an injection 
\[
\iota: H_T^*(\hess{s}{H}) \hookrightarrow H_T^*(\hess{s}{H,T}) \simeq \bigoplus_{w\in W} \C[\ft^*].
\]
Here $\C[\ft^*]$ denotes the polynomial ring in the simple roots $\C[\alpha_1, \ldots, \alpha_n]$. We identify $H_T^*(\hess{s}{H})$ with its image under this map, which is given by the following concrete description,
\begin{eqnarray}\label{eqn.equiv-cohomology}
\quad H_T^*(\hess{s}{H})  \simeq  \left\{ (f_w)_{w\in W} \mid  \begin{array}{c}f_w-f_{s_\gamma w}\in \left< \gamma \right> \textup{ for } \gamma\in \Phi^+,\\ w^{-1}(\gamma)\in \Phi_H\cap \Phi^- \end{array}  \right\}.
\end{eqnarray}
A more leisurely exposition of the above can be found in~\cite{AHMMS2020, Tymoczko2005}.

Let $\ct: = \bigoplus_{w\in W} \C[\ft^*]$.  The ring $\ct$ is a $\C[\ft^*]$-module via the action, 
\begin{eqnarray}\label{eqn.left-module}
(p,f) \mapsto pf \; \textup{ where } \; (pf)_w = pf_w 
\end{eqnarray}
for all $p\in \C[\ft^*]$ and $ f=(f_w)_{w\in W} \in \ct$.
Equation~\eqref{eqn.equiv-cohomology} identifies $H_T^*(\hess{s}{H})$ as a $\C[\ft^*]$-submodule of $\ct$, and we make this identification from now on.   
We can also view $\ct$ as a $\C[\ft^*]$-module in another way.  To distinguish this structure from that defined in~\eqref{eqn.left-module} we call this the right $\C[\ft^*]$-module action on $\ct$, which is defined by
\begin{eqnarray}\label{eqn.right-module}
(q, f )\mapsto fq \; \textup{ where } \; (fq)_w = w(q)f_w
\end{eqnarray}
for all $q\in \C[\ft^*]$ and $f=(f_w)_{w\in W} \in \ct$.
The equivariant cohomology $H_T^*(\hess{s}{H})$ is also a $\C[\ft^*]$-submodule of $\ct$ with respect to the right action.

Let $\mathbf{1} = (1)_{w\in W}\in \ct$.  We can identify $\C[\ft^*]$ with the $\C[\ft^*]$-submodule generated by $\mathbf{1}$, via the left action from~\eqref{eqn.left-module} or the right action from~\eqref{eqn.right-module}.  
To distinguish between the left and right submodules generated by $\mathbf{1}$ we write:
\begin{itemize}
\item $\C[L]$ for the left $\C[\ft^*]$-module generated by $\mathbf{1}$ via the action from~\eqref{eqn.left-module}.
\item $\C[R]$ for the right $\C[\ft^*]$-module generated by $\mathbf{1}$ via the action from~\eqref{eqn.right-module}.
\end{itemize}
This notation is inspired by the exposition in~\cite{Guay-Paquet2016}.  Both $\C[L]$ and $\C[R]$ are submodules of $H_T^*(\hess{s}{H})$ in the appropriate sense. Recall that $H^*(\hess{s}{H}) \simeq H_T^*(\hess{s}{H})/ \left<  \alpha_1, \ldots, \alpha_n\right>_L$ where $\left<  \alpha_1, \ldots, \alpha_n\right>_L$ is the ideal of $H_T^*(\hess{s}{H})$ generated by the positive degree elements in $\C[L]$ (see \cite[Prop.~2.3]{Tymoczko2005}).

The $W$-action on $\Phi \subseteq \ft^*$ extends to a $W$-action on $\C[\ft^*]$ in a natural way.  We denote the action of $w\in W$ on $f\in \C[\ft^*]$ by $w(f)$.  The dot action of $W$ on $\ct$ is defined by
\begin{eqnarray}\label{eqn.dot-action}
(v\cdot f)_w := v(f_{v^{-1}w}) \; \textup{ for all } \; v\in W,\, f\in \ct.
\end{eqnarray}
The dot action preserves the equivariant cohomology $H_T^*(\hess{s}{H})$ in $\ct$ \cite[Lemma 8.7]{AHMMS2020}.

The submodules $\C[L]$ and $\C[R]$ introduced above are also invariant under the dot action.  Let $\C[\ft^*]^W$ denote the ring of $W$-invariants in $\C[\ft^*]$.  By Chevalley's Theorem 
$\C[\ft^*] \simeq \C[\ft^*]^W\otimes \cc$ where $\cc$ is the coinvariant algebra of $W$.  The following lemma computes the graded character of the dot action on $\C[L]$ and $\C[R]$.

\begin{lemma}\label{lemma.char}  We have $\C[L]\simeq \C[\ft^*]^W\otimes \cc$ and $\C[R] \simeq \C[\ft^*]^W \otimes \cc'$, where $\cc'\simeq \cc$ as vector spaces, but $W$ acts trivially on $\cc'$.  In particular,
\[
\Ch(\C[L]) = \prod_{i=1}^{n} \frac{1}{(1-q^{d_i})}\,\Ch(\cc)
\]
and 
\[ \Ch(\C[R]) = \prod_{i=1}^{n} \frac{1}{(1-q^{d_i})} \, \sum_{w\in W} [1_W]q^{\ell(w)}
\]
where $d_1, \ldots, d_n$ denote the degrees of $W$ \textup{(}cf.~\cite[Sec.~3.7-3.8]{Humphreys-Coxeter}\textup{)}.
\end{lemma}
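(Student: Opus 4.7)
The plan is to identify $\C[L]$ and $\C[R]$ explicitly as $W$-submodules of $\ct$ by tracking the effect of the dot action on the natural generating sets, and then invoke Chevalley's theorem.

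First I would describe $\C[L]$ concretely. By definition of the left action in~\eqref{eqn.left-module}, for any $p\in \C[\ft^*]$ the element $p\cdot \mathbf{1}$ is the ``diagonal'' tuple $(p)_{w\in W}$, so $\C[L]=\{(p)_{w\in W}\mid p\in \C[\ft^*]\}$. Applying the dot action \eqref{eqn.dot-action} to such a diagonal tuple gives
\begin{equation*}
(v\cdot (p)_{w\in W})_w \;=\; v(p_{v^{-1}w}) \;=\; v(p),
\end{equation*}
which is again diagonal. Thus $\C[L]$ is dot-stable, and the assignment $p\mapsto (p)_{w\in W}$ is a $W$-equivariant isomorphism from $\C[\ft^*]$ (with its natural action) onto $\C[L]$. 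Chevalley's theorem then gives $\C[\ft^*]\simeq \C[\ft^*]^W\otimes \cc$ as graded $W$-modules, which yields the first claimed isomorphism.

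Next I would treat $\C[R]$ similarly. By~\eqref{eqn.right-module}, $\mathbf{1}\cdot q=(w(q))_{w\in W}$, so $\C[R]=\{(w(q))_{w\in W}\mid q\in \C[\ft^*]\}$. The key calculation is that the dot action fixes such tuples pointwise: if $f_w=w(q)$, then
\begin{equation*}
(v\cdot f)_w \;=\; v(f_{v^{-1}w}) \;=\; v\bigl(v^{-1}w(q)\bigr) \;=\; w(q) \;=\; f_w.
\end{equation*}
Hence the dot action on $\C[R]$ is trivial, and $q\mapsto (w(q))_{w\in W}$ is a degree-preserving vector space isomorphism $\C[\ft^*]\xrightarrow{\sim} \C[R]$ intertwining the trivial $W$-action on the target. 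Writing $\C[\ft^*]\simeq \C[\ft^*]^W\otimes \cc$ as graded vector spaces and letting $\cc'$ denote $\cc$ with the trivial $W$-action, we obtain the second isomorphism.

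Finally, the character formulas follow by taking graded characters. Since $\C[\ft^*]^W$ is a polynomial ring on $W$-invariant generators of degrees $d_1,\ldots,d_n$ (with trivial $W$-action after doubling to match the grading convention of $\Ch$), we have $\Ch(\C[\ft^*]^W)=\prod_{i=1}^n (1-q^{d_i})^{-1}[1_W]$, and the tensor product decompositions above immediately give
\begin{equation*}
\Ch(\C[L]) \;=\; \prod_{i=1}^n \frac{1}{1-q^{d_i}}\,\Ch(\cc),\qquad
\Ch(\C[R]) \;=\; \prod_{i=1}^n \frac{1}{1-q^{d_i}}\,\sum_{w\in W}[1_W]q^{\ell(w)},
\end{equation*}
using that the Poincar\'e polynomial of $\cc$ equals $\sum_{w\in W}q^{\ell(w)}$. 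There is no real obstacle here; the only thing to be careful about is making sure the conventions for the $W$-action (left versus right, and the twist by $v$ in \eqref{eqn.dot-action}) are tracked consistently, which is precisely what makes $\C[L]$ recover the natural action while $\C[R]$ collapses to the trivial one.
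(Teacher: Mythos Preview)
Your proof is correct and follows essentially the same approach as the paper: you compute the dot action on $\C[L]$ and $\C[R]$ via the same two-line calculations, identify them with $\C[\ft^*]$ carrying the natural and trivial $W$-actions respectively, and then apply Chevalley's theorem to extract the character formulas. The only difference is presentational---you spell out the identifications of $\C[L]$ and $\C[R]$ as subsets of $\ct$ a bit more explicitly---but the argument is the same.
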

\begin{proof} We first compute the dot action on $\C[L]$ and $\C[R]$, respectively. If $p\in \C[L]$ then
\[
(v\cdot p)_w = v(p_{v^{-1}w}) = v(p) =  (v(p))_w.
\]
Thus the dot action on $\C[L]$ is the usual graded representation of $W$ on the polynomial ring $\C[\ft^*]$. 
If $q\in \C[R]$ then
\[
(v\cdot q)_w=v(q_{v^{-1}w}) = v(v^{-1}w(q)) = w(q)=q_w
\]
so the dot action on $\C[R]$ is trivial.  Since $\C[L]\simeq \C[R]\simeq \C[\ft^*]$ as vector spaces, the first assertion of the lemma follows from the above computations. Finally, the second assertion follows from the fact that the Poincar\'e polynomial of the ring $\C[\ft^*]^W$ is precisely $\prod_{i=1}^n \frac{1}{(1-q^{d_i})}$ and $\Ch(\cc') = \sum_{w\in W} [1_W]q^{\ell(w)}$.
\end{proof}

Consider the ideals $\left< \alpha_1, \ldots, \alpha_n \right>_L$, and respectively $\left< \alpha_1, \ldots, \alpha_n \right>_R$, in the equivariant cohomology $H_T^*(\hess{s}{H})$ generated by the positive degree elements of $\C[L]$, and respectively $\C[R]$. Both are $W$-invariant, and the dot action on $H_T^*(\hess{s}{H})$ induces an action on the quotients
\begin{eqnarray}\label{eqn.dot-action-rep}
H^*(\hess{s}{H}) \simeq H_T^*(\hess{s}{H})/\left<  \alpha_1, \ldots, \alpha_n\right>_L
\end{eqnarray}
and
\begin{eqnarray}\label{eqn.LLT-rep}
\LLT_H:= H_T^*(\hess{s}{H}) / \left< \alpha_1, \ldots, \alpha_n \right>_R.
\end{eqnarray}
We refer to the $W$-module $H^*(\hess{s}{H})$ as the dot action representation and the $W$-module $\LLT_H$ as the LLT representation.

\begin{Rem}
In the type A case $\LLT_H\simeq H^*(\cx_H)$ where $\cx_H$ is the smooth manifold of Hermitian matrices having a particular staircase form (determined by $H$) and a given fixed simple spectrum (determined by $s$) \cite{Ayzenberg-Buchstaber}.
\end{Rem}

We can now prove Proposition~\ref{prop.LLT}; our argument closely follows that of Guay-Paquet in~\cite{Guay-Paquet2016} for the type A case. In the proof below, $\star$ denotes the product (which corresponds to taking the tensor product of representations) in the character ring of $W$.

\begin{proof}[Proof of Proposition~\ref{prop.LLT}] The equivariant cohomology $H_T^*(\hess{s}{H})$ is a free module of rank $n!$ over $\C[\ft^*]$ with respect to either~\eqref{eqn.left-module} or~\eqref{eqn.right-module}, see the discussion in \cite[Section 8.5]{Guay-Paquet2016} or \cite[Section 2.3]{AHMMS2020}.  In particular, the usual extension of scalars construction for free modules yields isomorphisms of $W$-modules:
\begin{eqnarray}\label{eqn.scalars1}
H_T^*(\hess{s}{H}) \simeq \C[L]\otimes_\C H^*(\hess{s}{H})
\end{eqnarray}
and
\begin{eqnarray}\label{eqn.scalars2}
H_T^*(\hess{s}{H}) \simeq \C[R]\otimes_\C \LLT_H.
\end{eqnarray}
It now follows that 
\[
\Ch(\C[L]) \star \Ch(H^*(\hess{s}{H})) = \Ch(\C[R])\star \Ch(\LLT_H).
\]
Applying the formulas from Lemma~\ref{lemma.char} and dividing by $\prod_{i=1}^n \frac{1}{(1-q^{d_i})}$ now yields the desired result.
\end{proof}

To conclude, we sketch a proof of the fact that if $\hess{s}{H}$ is disconnected then both the dot action and LLT representations are induced by corresponding representations of a parabolic subgroup of $W$.  It is well known to experts but, to the best of our knowledge, has not appeared in the literature so we include an outline of the argument here.   
This fact can be used together with Proposition~\ref{ss_hessy} to give another proof of Borho and MacPherson's result in equation~\eqref{parabolic_case} above.

Let $J = \{ \alpha\in \Delta \mid -\alpha\in \Phi_H \} \subseteq \Delta$.  
Then $\hess{s}{H}$ is connected if and only if $J=\Delta$ \cite[Appendix A]{Anderson-Tymoczko2010}.  
Let $L$ denote the standard Levi subgroup associated to $J$ with Lie algebra $\fl$. There is a natural embedding of the flag variety $\cb_L:=L/(B\cap L)$ of $L$ into $\cb$ given by $\ell(B\cap L)\mapsto \ell B$.  Let $W_J:= \left< s_\alpha \mid \alpha\in J \right>$ be the corresponding parabolic subgroup of $W$, which is the Weyl group of $L$.  Denote by $W^J$ the set of shortest coset representatives for the left cosets $W/W_J$. For each $v\in W^J$ we have that $s_v:=v^{-1}.s$ is a regular semisimple element in $\fl$.  Furthermore, by definition $\bar H:=H\cap \fl$ is a Hessenberg space in $\fl$ and the regular semisimple Hessenberg variety $\hess{L,s}{\bar H}$ in the flag variety of $L$ is connected. Now the decomposition of $\hess{s}{H}$ into connected components is given by
\begin{eqnarray}\label{eqn.decomp}
\hess{s}{H} = \bigsqcup_{v\in W^J} v(\cb_{L,s_v}^{\bar H})
\end{eqnarray}
where $\cb_{L,s_v}^{\bar H} = \{\ell(B\cap L)\in \cb_L \mid \ell^{-1}.s_v\in \bar H \}$.
Each $v(\cb_{L,s_v}^{\bar H})$ is isomorphic to $\hess{L,s}{\bar H}$, and the cohomology decomposes accordingly.  
We now obtain the following directly from the definition of the dot action together with the decomposition cohomology induced by~\eqref{eqn.decomp}.

\begin{cor}\label{cor.induction} There is an isomorphism of $W$-modules:
\begin{eqnarray}\label{eqn.induction}
H_T^*(\hess{s}{H}) \simeq \Ind_{W_J}^W (H_T^*(\hess{L,s}{\bar H})).
\end{eqnarray}
In particular, both the dot action and LLT representations are obtained by induction from the corresponding representations for $\hess{L,s}{\bar H}$, namely,
\[
H^*(\hess{s}{H}) \simeq \Ind_{W_J}^W (H^*(\hess{L,s}{\bar H})) \; \textup{ and } \;  \LLT_H \simeq \Ind_{W_J}^W (\LLT_{\bar{H}}).
\]
\end{cor}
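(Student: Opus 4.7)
The strategy is to combine the GKM presentation~\eqref{eqn.equiv-cohomology} of $H_T^*(\hess{s}{H})$ with the decomposition~\eqref{eqn.decomp} into connected components, producing a direct-sum decomposition indexed by $W^J$. I would then show that the dot action of $W$ permutes these summands in the same way $W$ permutes the left cosets $W/W_J$, and that the stabilizer $W_J$ of the identity coset acts on the identity summand as the dot action of $W_J$ on $H_T^*(\hess{L,s}{\bar H})$. The Frobenius criterion for induced representations then delivers~\eqref{eqn.induction}.

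The first step is to identify the $T$-fixed points of the component $v(\cb_{L,s_v}^{\bar H})$: these are $\{vuB : u \in W_J\}$, so~\eqref{eqn.decomp} corresponds, at the level of fixed points, to the partition $W = \bigsqcup_{v \in W^J} vW_J$. This should yield a $\C[\ft^*]$-module direct sum $H_T^*(\hess{s}{H}) = \bigoplus_{v \in W^J} E_v$, where $E_v$ is cut out by the GKM relations whose indices lie in $vW_J$, provided no GKM edge of $\hess{s}{H}$ connects fixed points in different cosets. To verify this, I would check that the edge condition $w^{-1}(\gamma) \in \Phi_H \cap \Phi^-$ forces $w^{-1}(\gamma) \in \Phi_J^-$, so that $w^{-1}s_\gamma w = s_{w^{-1}(\gamma)} \in W_J$. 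Here the inclusion $\Phi_H \cap \Phi^- \subseteq \Phi_J^-$ is a purely combinatorial fact: writing $I = H^\perp \in \ideals$, the set $-(\Phi_H\cap\Phi^-) = \Phi^+ \setminus \Phi_I$ is downward-closed in the root order (since $\Phi_I$ is upward-closed under the adjoint $\fb$-action), so the simple-root support of each element of $\Phi^+ \setminus \Phi_I$ lies in $J$ by the definition of $J$.

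Next I would use the dot-action formula~\eqref{eqn.dot-action} to track the $W$-action. If $f \in \ct$ is supported on $vW_J$, then $(v'\cdot f)_w = v'(f_{v'^{-1}w})$ is nonzero only when $v'^{-1}w \in vW_J$, i.e., $w \in v'vW_J$, so the dot action of $W$ permutes the summands $E_v$ exactly as $W$ permutes the cosets in $W/W_J$. In particular $W_J$ stabilizes $E_e$, and the restriction of the dot action to $W_J$ on $E_e$ is, by the same formula, the dot action of $W_J$ on $\bigoplus_{u \in W_J} \C[\ft^*]$. Moreover, the GKM relations carving out $E_e$ are precisely those involving $\gamma \in \Phi_J^+$ with $u^{-1}(\gamma) \in \Phi_{\bar H}\cap\Phi_J^-$, which coincide with the GKM relations defining $H_T^*(\hess{L,s}{\bar H})$ in the flag variety $\cb_L$ (noting $\Phi_{\bar H} = \Phi_H \cap \Phi_J$). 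Hence $E_e \simeq H_T^*(\hess{L,s}{\bar H})$ as $W_J$-modules.

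By the Frobenius description of induced representations -- direct sum indexed by $W/W_J$, $W$ permuting summands, stabilizer acting on identity summand as the source module -- the $W$-module $H_T^*(\hess{s}{H})$ is $\Ind_{W_J}^W H_T^*(\hess{L,s}{\bar H})$, establishing~\eqref{eqn.induction}. The two quotient statements follow because the ideals $\langle \alpha_1,\ldots,\alpha_n\rangle_L$ and $\langle \alpha_1,\ldots,\alpha_n\rangle_R$ are $W$-stable and correspond, under~\eqref{eqn.induction}, to the analogous ideals in $H_T^*(\hess{L,s}{\bar H})$, so exactness of induction yields the induced-representation statements for $H^*(\hess{s}{H})$ and $\LLT_H$. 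The main obstacle is the bookkeeping for the general summand $E_v$ with $v \neq e$: the target is $v(\cb_{L,s_v}^{\bar H})$ with a twisted $T$-action arising from conjugation by $v$ and with $s_v = v^{-1}.s$ in place of $s$. Fortunately, all regular semisimple Hessenberg varieties in $\cb_L$ carry the same equivariant cohomology up to this twist, so the representation-theoretic content of~\eqref{eqn.induction} is unaffected, and one needs only to confirm that the permutation action on $\{E_v\}$ matches the standard $W$-action on $W/W_J$.
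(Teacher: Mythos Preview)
Your argument is correct and follows exactly the route the paper indicates: the paper's entire justification is the sentence ``We now obtain the following directly from the definition of the dot action together with the decomposition [of] cohomology induced by~\eqref{eqn.decomp},'' and you have supplied a complete realization of that sketch via the GKM model. In particular, your verification that every GKM edge of $\hess{s}{H}$ stays inside a single coset $wW_J$ (because $w^{-1}(\gamma)\in\Phi_H\cap\Phi^-$ forces $w^{-1}(\gamma)\in\Phi_J^-$), together with the check that the dot action of $v'$ sends $E_v$ to $E_{[v'v]}$ and that $v\cdot E_e=E_v$, is precisely what is needed to invoke the Frobenius description of an induced module; the bookkeeping worry you raise about the twisted components $E_v$ for $v\neq e$ is already resolved by this transitivity. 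For the two quotient statements, your observation that both the left and right augmentation ideals are $W$-stable and decompose compatibly with $\bigoplus_v E_v$ is correct (for the right action one checks $v\cdot(fq)=(v\cdot f)q$ directly from~\eqref{eqn.dot-action} and~\eqref{eqn.right-module}), so exactness of induction finishes the job.
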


%%  The bibliography

%%  If your bibliography is in BibTeX format, use the following setup:
%%  Style BST file for numbered citation:
%\bibliographystyle{plain}
%%  Bibliography file (usually `*.bib')
%\bibliography{pamq-bibliography}          
%%
%%  or include bibliography directly:
%\begin{thebibliography}{9}
%%  Use \bibitem{r1} or \bibitem[Surname(2010)]{r1} (for authoryear case)
%%  Put author names in \textsc{} command in order to use small caps font
%
%\bibitem{}
%\textsc{}
%
%\end{thebibliography}

\newcommand{\etalchar}[1]{$^{#1}$}

\end{document}